\newtheorem*{rep@theorem}{\rep@title}
\newcommand{\newreptheorem}[2]{%
\newenvironment{rep#1}[1]{%
 \def\rep@title{#2 \ref{##1}}%
 \begin{rep@theorem}}%
 {\end{rep@theorem}}}
\newcommand{\N}{\mathbb{N}}
\newcommand{\R}{\mathbb{R}}
\newcommand{\sfd}{{\sf d}}
\newcommand{\supp}{\mathop{\rm supp}\nolimits}   
\renewcommand{\d}{{\mathrm d}}
\newcommand{\restr}[1]{\lower3pt\hbox{$|_{#1}$}}
\newcommand{\limi}{\varliminf}
\newcommand{\lims}{\varlimsup}
\newcommand{\X}{{\rm X}}
\newcommand{\Y}{{\rm Y}}
\newcommand{\lip}{{\rm lip}}
\newcommand{\Lip}{{\rm Lip}}
\newcommand{\loc}{{\rm loc}}
\newcommand{\mm}{\mathfrak m}
\newcommand{\UG}{{\rm UG}}
\newcommand{\width}{{\rm width}}
\newcommand{\dwidth}{{\rm disc\text{-}width}}
\renewcommand{\SS}{{\rm SS}}
\newcommand{\dSS}{{\rm disc\text{-}SS}}
\newcommand{\SR}{{\rm SR}}
\newcommand{\dSR}{{\rm disc\text{-}SR}}
\newcommand{\V}{{\rm V}}
\newcommand{\pos}{{\rm pos}}
\newcommand{\sfc}{{\sf c}}
\newcommand{\mytag}[2]{%
  \text{#1}%
  \@bsphack
  \begingroup
    \@onelevel@sanitize\@currentlabelname
    \edef\@currentlabelname{%
      \expandafter\strip@period\@currentlabelname\relax.\relax\@@@%
    }%
    \protected@write\@auxout{}{%
      \string\newlabel{#2}{%
        {\color{black}#1}%
        {\thepage}%
        {\@currentlabelname}%
        {\@currentHref}{}%
      }%
    }%
  \endgroup
  \@esphack
}
\def\Xint#1{\mathchoice
{\XXint\displaystyle\textstyle{#1}}%
{\XXint\textstyle\scriptstyle{#1}}%
{\XXint\scriptstyle\scriptscriptstyle{#1}}%
{\XXint\scriptscriptstyle\scriptscriptstyle{#1}}%
\!\int}
\def\XXint#1#2#3{{\setbox0=\hbox{$#1{#2#3}{\int}$ }
\vcenter{\hbox{$#2#3$ }}\kern-.6\wd0}}
\def\dashint{\Xint-}
\newtheorem{theorem}{Theorem}[section]
\newtheorem{corollary}[theorem]{Corollary}
\newtheorem{lemma}[theorem]{Lemma}
\newtheorem{proposition}[theorem]{Proposition}
\theoremstyle{definition}
\newtheorem{definition}[theorem]{Definition}
\newcounter{Counter}
\newtheorem{remark}[theorem]{Remark}
\newcommand{\Int}{\textup{int}}
\newcommand{\Mod}{\textup{Mod}}
\newcommand{\codH}[1]{\mathcal{H}^{{\rm cod-}{#1}}}
\title[Poincar\'{e} inequality and Minkowski content of separating sets]{A geometric approach to Poincar\'{e} inequality and Minkowski content of separating sets}
\author[Emanuele Caputo]{Emanuele Caputo}\address[Emanuele Caputo]{Mathematics Institute, Zeeman Building, University of Warwick, Coventry, CV4 7AL, United Kingdom}\email{emanuele.e.caputo@jyu.fi}
\author[Nicola Cavallucci]{Nicola Cavallucci}\address[Nicola Cavallucci]{
Institute of Mathematics, EPFL, Station 8, 1015 Lausanne, Switzerland}\email{n.cavallucci23@gmail.com}
\keywords{Poincar\'{e} inequality, doubling measure, separating sets, Minkowski content, metric measure spaces, pencil of curves}
\subjclass[2020]{30L99, 28A75, 49J52}
\begin{document}

\maketitle

\begin{abstract}
    The goal of this paper is to continue the study of the relation between the Poincar\'{e} inequality and the lower bounds of Minkowski content of separating sets, initiated in our previous work \cite{CaputoCavallucci2024}. A new shorter proof is provided. An intermediate tool is the study of the lower bound of another geometric quantity, called separating ratio. The main novelty is the description of the relation between the infima of the separating ratio and the Minkowski content of separating sets. We prove a quantitative comparison between the two infima in the local quasigeodesic case and equality in the local geodesic one. 
    No Poincar\'{e} assumption is needed to prove it. The main tool employed in the proof is a new function, called the position function, which allows in a certain sense to fibrate a set in boundaries of separating sets. We also extend the proof to measure graphs, where due to the combinatorial nature of the problem, the approach is more intuitive. In the appendix, we revise some classical characterizations of the $p$-Poincar\'{e} inequality, by proving along the way equivalence with a notion of $p$-pencil that extends naturally the definition for $p=1$.
\end{abstract}

\tableofcontents

\section{Introduction}
Doubling metric measure spaces satisfying a $p$-Poincaré inequality for some $p\ge 1$ are natural objects to study problems in geometric measure theory and potential theory on metric measure spaces. This was particularly clear after the theory developed in \cite{Cheeger99} and \cite{HK00}. As such, this class has widely spread and is by now very common in works on analysis in metric spaces.
Although the Poincar\'{e} inequality has an analytical formulation, it has been realised that it is in fact a profoundly geometric condition.
Under the standing doubling assumption, it can be characterized in purely geometric terms (see \cite{Kei03}, \cite{KorteLahti2014}, \cite{FasslerOrponen19}, \cite{ErikssonBique2019II}, \cite{DurCarErikBiqueKorteShanmu21}, \cite{CaputoCavallucci2024}). 
In this paper we continue the study of the geometric condition formulated by the authors in \cite{CaputoCavallucci2024} in terms of energy associated to separating sets, for the case $p=1$. In particular, among the several energies studied in \cite{CaputoCavallucci2024}, we focus on the Minkowski content with respect to the measure weighted with the Riesz potential. To be precise we recall the definition of separating sets and of Riesz potential.
\begin{definition}
\label{def:separating_sets}
    Let $(\X,\sfd,\mm)$ be a metric measure space and let $x,y\in \X$. A closed set $\Omega$ is a separating set from $x$ to $y$ if there exists $r>0$ such that $B_r(x) \subseteq \Int(\Omega)$ and $B_r(y) \subseteq \Omega^c$. The class of all separating sets from $x$ to $y$ is denoted by $\SS_{\textup{top}}(x,y)$.
\end{definition}
\begin{definition}
     Let $(\X,\sfd,\mm)$ be a metric measure space.
    Given $x,y\in \X$ and $L\geq 1$, the \emph{$L$-truncated Riesz potential with poles at $x,y$} is
\begin{equation*}
    R_{x,y}^L(z):= \chi_{B_{x,y}^L}(z) \left( \frac{\sfd(x,z)}{\mm(B_{\sfd(x,z)}(x))} + \frac{\sfd(y,z)}{\mm(B_{\sfd(y,z)}(y))}\right),
\end{equation*}
where $B_{x,y}^L = B_{2L\sfd(x,y)}(x) \cup B_{2L\sfd(x,y)}(y)$.
Its associated measure is $\mm_{x,y}^L = R_{x,y}^L \mm$.
\end{definition}
Given a couple of points $x,y \in \X$, we formulate the following condition.
\begin{equation*}
    {\text{(BMC)}}_{x,y}\quad \exists c>0, L\geq 1\text{ such that }(\mm_{x,y}^L)^{+}(\Omega) \ge c\text{ for every }\Omega \in \SS_{\textup{top}}(x,y),
\end{equation*}
where $(\mm_{x,y}^L)^{+}(\Omega)$ denotes the Minkowski content of the set $\Omega$ with respect to the measure $\mm_{x,y}^L$, see Section \ref{subsec-Minkowski} for the details.
The notation ${\text{(BMC)}}_{x,y}$ stands for `big Minkowski content of separating sets at $x,y$'. 
We recall that, according to \cite[Theorem 1.2]{CaputoCavallucci2024}, a doubling metric measure space has a $1$-Poincar\'{e} inequality if and only if ${\text{(BMC)}}_{x,y}$ holds for every $x,y \in \X$ with uniform constants.
This statement is a quantification of the well-known heuristic intuition that the validity of the Poincar\'{e} inequality is obstructed by the presence of bottlenecks in the metric space. The Minkowski content computed via the reference measure weighted with the Riesz potential is the correct energy to measure the size of boundaries of separating sets. 
\vspace{2mm}

\noindent Typically, all the other geometric characterizations of the Poincaré inequality in literature are expressed in terms of properties of curves connecting two points. One of the goals of this paper is to investigate thoroughly the relation between these classical approaches and the (BMC)$_{x,y}$ property. With this in mind we consider a definition similar to the one used in \cite{Sylvester-Gong-21} (originated from previous contributions in \cite{BateLi18} and \cite{ErikssonBique2019}) for which we need an auxiliary quantity. We define the width of a set $A \subseteq \X$ with respect to the points $x,y\in \X$ as 
$$\width_{x,y}(A) := \inf_{\gamma \in \Gamma_{x,y}} \ell(\gamma \cap A),$$
where $\Gamma_{x,y}$ is the set of rectifiable paths connecting $x$ to $y$. The quantity $\width_{x,y}(A)$ measures the width of the set $A$ in the following sense: we consider all the curves (with finite length) connecting $x$ to $y$ and we look at the one whose length inside $A$ is minimal.

\begin{definition}
\label{def:CLLsetconnected}
    Let $C>0$ and $L \geq 1$. We say that $(\X,\sfd,\mm)$ is $(C,L)$ $1$-set-connected at $x,y\in \X$ if
\begin{equation}
    \label{eq:defin_set_connectedness_bounded_intro}
    \width_{x,y}(A)\leq C\mm_{x,y}^L(A) \text{ for all } A\subseteq \X  \text{ Borel}.
\end{equation}
\end{definition}
This definition is inspired by \cite{Sylvester-Gong-21}, however there are two main differences. In \cite{Sylvester-Gong-21} the definition that the authors use can be rephrased as
\begin{equation}
\label{eq:defin_set_connectedness_bounded_maximal_function}
    \width_{x,y}^\Lambda(A)\leq C\sfd(x,y)\left( M_{L\sfd(x,y)}\chi_A(x) + M_{L\sfd(x,y)}\chi_A(y) \right) \text{ for all } A\subseteq \X  \text{ Borel},
\end{equation}
where $M_\rho(f)(x)$ is the Hardy-Littlewood maximal function of a function $f \ge 0$ computed at the point $x$ up to scale $\rho>0$ (see \cite{HK00}) and where in the definition of $\width_{x,y}^\Lambda(A)$ one only considers $\Lambda$-quasigeodesics connecting $x$ and $y$, see Section \ref{sec:set-connectedness} for details. The first difference is in the class of curves used for defining the width. Notice that in \cite{Sylvester-Gong-21}, the finiteness of $\Lambda$ in \eqref{eq:defin_set_connectedness_bounded_maximal_function} is crucial to prove that this condition \emph{for every couple of points} and with uniform constants implies a $p$-Poincaré inequality for all $p>1$ (see \cite[Theorem 2.19 \& Lemma 2.20]{Sylvester-Gong-21}). 
The second difference is in the right hand sides. They reflect the two equivalent classical pointwise estimates of Heinonen (see \cite[Sec.\ 9]{Hei01}) 
\begin{itemize}
    \item $|u(x)-u(y)| \le C \int \lip u \,\d \mm_{x,y}^L$ for all $u \in {\Lip}(\X)$ and for every $x,y \in\X$,
    \item $|u(x)-u(y)| \le C\,\sfd(x,y)\,(M_{L\sfd(x,y)}(\lip u)(x)+M_{L\sfd(x,y)}(\lip u)(y))$ for all $u \in {\Lip}(\X)$ and for every $x,y \in\X$,
\end{itemize}
whose validity characterizes the $1$-Poincar\'{e} inequality in doubling setting. We recall that $\lip\,u(x)$ denotes the local Lipschitz constant of the function $u$ at $x \in \X$.\\
The first result of this paper shows that the pointwise estimates à la Heinonen for a fixed couple of points $x,y\in\X$, the (BMC)$_{x,y}$ condition and Definition \ref{def:CLLsetconnected} are all equivalent, if $\X$ satisfies mild connectedness assumptions.
\begin{theorem}
\label{theo:main-intro-p=1}
    Let $(\X,\sfd,\mm)$ be a doubling metric measure space which is path connected and locally $\Lambda$-quasiconvex. Let $x,y \in \X$. Then the following conditions are quantitatively equivalent:
    \begin{itemize}
        \item[(i)] there exist $C >0$, $L \ge 1$ such that
        \begin{equation}
        \label{eq:pointwise_Poincaré_Riesz}
            |u(x)-u(y)| \le C \int \lip u \,\d \mm_{x,y}^L\qquad \text{for all }u \in {\Lip}(\X);
        \end{equation}
        \item[(ii)] there exist $C >0$, $L \ge 1$ such that
        \begin{equation}
        \label{eq:pointwise_Poincaré_Riesz_UG}
            |u(x)-u(y)| \le C \int g \,\d \mm_{x,y}^L\qquad \text{for all }u \text{ Borel and all } g \in \UG(u);
        \end{equation}
        \item[(iii)] the space $(\X,\sfd,\mm)$ is $(C,L)$ $1$-set-connected at $x,y$; 
        \item[(iv)] there exist $C,L$ such that \eqref{eq:defin_set_connectedness_bounded_intro} is satisfied by every closed subset $A\subseteq \X$;
        \item[(v)] the space $(\X,\sfd,\mm)$ satisfies \textup{(BMC)}$_{x,y}$.
    \end{itemize}
\end{theorem}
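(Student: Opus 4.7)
The plan is to close the cycle of implications
\[ (\text{ii}) \Rightarrow (\text{i}) \Rightarrow (\text{iv}) \Rightarrow (\text{v}) \Rightarrow (\text{iii}) \Rightarrow (\text{ii}). \]
The first one is immediate because $\lip u$ is an upper gradient of every Lipschitz $u$, so (ii) applied to $g=\lip u$ gives (i). For (i) $\Rightarrow$ (iv), given a closed set $A\subseteq\X$, I would introduce the length-in-$A$ function
\[ u_A(z) := \inf_{\gamma\in\Gamma_{x,z}} \ell(\gamma\cap A). \]
Under local $\Lambda$-quasiconvexity, $u_A$ is $\Lambda$-Lipschitz, and closedness of $A$ forces that for $z\in A^c$ any sufficiently short quasigeodesic to a nearby point stays in $A^c$, so $\lip u_A(z)=0$ there; hence $\lip u_A\le \Lambda\chi_A$ pointwise. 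Since $u_A(x)=0$ and $u_A(y)=\width_{x,y}(A)$, applying (i) to $u_A$ yields $\width_{x,y}(A)\le C\Lambda\, \mm^L_{x,y}(A)$.

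For (iv) $\Rightarrow$ (v), fix $\Omega\in\SS_{\textup{top}}(x,y)$ with $B_r(x)\subseteq\Int(\Omega)$ and $B_r(y)\subseteq\Omega^c$, and for $\epsilon<r$ consider the closed tube $A_\epsilon:=\{z:\sfd(z,\partial\Omega)\le\epsilon\}$. Any $\gamma\in\Gamma_{x,y}$ must cross $\partial\Omega$, and since both endpoints lie outside $A_\epsilon$, must enter and leave $A_\epsilon$, contributing length at least $2\epsilon$; so $\width_{x,y}(A_\epsilon)\ge 2\epsilon$. By (iv), $\mm^L_{x,y}(A_\epsilon)\ge (2/C)\epsilon$, and taking $\liminf_{\epsilon\to 0^+}$ gives the Minkowski content lower bound defining (v).

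The heart of the argument is the implication (v) $\Rightarrow$ (iii), for which the new machinery of the paper enters: the separating ratio and the position function. The idea is to use the position function to fibrate (the relevant portion of) $\X$ into boundaries of separating sets between $x$ and $y$; condition (v) ensures that each fiber has $\mm^L_{x,y}$-Minkowski content bounded below by a uniform constant, and this pointwise information is then integrated along the fibers via a co-area–type identity to produce the width inequality $\width_{x,y}(A)\le C\mm^L_{x,y}(A)$ for every Borel set $A$. I expect this to be the main technical obstacle: the construction of the position function must come with enough measurability and quantitative control for the local-to-global passage to be genuinely quantitative and to handle arbitrary Borel $A$ (not merely closed ones, which is precisely the gain of (iii) over (iv)).

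Finally, (iii) $\Rightarrow$ (ii) is the classical step from a set-connectedness inequality to an upper-gradient Heinonen estimate, implemented via a pencil of curves: (iii) yields (by a Hahn–Banach/duality argument, carried out in the appendix) the existence of a probability measure $\sigma$ on $\Gamma_{x,y}$ whose length push-forward is dominated by a multiple of $\mm^L_{x,y}$; averaging the defining inequality $|u(x)-u(y)|\le\int_\gamma g\,\d s$ against $\sigma$ then delivers (ii) and closes the cycle.
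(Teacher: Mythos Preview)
Your cycle has two genuine gaps, and they stem from a misreading of where the paper places its weight.

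\textbf{The implication (v) $\Rightarrow$ (iii).} You propose to use the position function to fibrate $A$ and then integrate the Minkowski lower bound via coarea. This is precisely the argument the paper gives for Theorem~\ref{thm:main_equivalence_minima}, \emph{not} for Theorem~\ref{theo:main-intro-p=1}, and it only yields the inequality for \emph{closed} $A$ (because $\lip\pos_A\le \Lambda\chi_{\overline A}$, not $\Lambda\chi_A$). So your position-function route gives (v) $\Rightarrow$ (iv), which does not close your cycle. The paper instead proves (v) $\Rightarrow$ (i) in three lines: for $u\in\Lip(\X)$ the sublevel sets $\{u\ge t\}$, $t\in(u(x),u(y))$, are topological separating sets, so $(\mm_{x,y}^L)^+(\{u\ge t\})\ge c$, and the coarea inequality for the Minkowski content (Proposition~\ref{prop:coarea_inequality_minkowski}) gives
\[
c\,|u(x)-u(y)|\le \int_{u(x)}^{u(y)} (\mm_{x,y}^L)^+(\{u\ge t\})\,\d t \le \int \lip u\,\d\mm_{x,y}^L.
\]
No position function is needed here.

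\textbf{The implication (iii) $\Rightarrow$ (ii).} You claim the appendix produces a pencil from set-connectedness by Hahn--Banach/min--max. It does not: the Sion argument in Theorem~\ref{thm:equivalence_pencil_Ap-connectedness} is run over \emph{Lipschitz} test functions, starting from the pointwise Poincar\'e inequality (your (ii), or equivalently (i)). To feed set-connectedness into that machinery you would need to upgrade $\width_{x,y}(A)\le C\mm_{x,y}^L(A)$ to $\inf_{\gamma}\int_\gamma g\le C\int g\,\d\mm_{x,y}^L$ for Lipschitz $g$, and this is not automatic: the layer-cake decomposition $g=\int_0^\infty\chi_{\{g>t\}}\,\d t$ does not help because $\inf_\gamma$ does not commute with the $t$-integral. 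The paper avoids this entirely by running the chain (iii) $\Rightarrow$ (iv) $\Rightarrow$ (v) $\Rightarrow$ (i) and then invoking the appendix for (i) $\Leftrightarrow$ (ii); the direction (ii) $\Rightarrow$ (iii) is the trivial one (plug $g=\chi_A$ into the pencil).

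Your (i) $\Rightarrow$ (iv) via $u_A(z)=\inf_{\gamma\in\Gamma_{x,z}}\ell(\gamma\cap A)$ is a correct and pleasant direct argument (modulo the local-to-global Lipschitz issue, handled as in the appendix by restricting to $\overline B_{x,y}^L$ and McShane-extending). But note that this function is essentially the position function, so you are already using that machinery here; there is no need to invoke it again for (v) $\Rightarrow$ (iii). A minor point on your (iv) $\Rightarrow$ (v): your two-sided tube $A_\epsilon=\overline B_\epsilon(\partial\Omega)$ controls the Minkowski content of $\partial\Omega$, not of $\Omega$; the paper uses the one-sided shell $\overline B_r(\Omega)\setminus\Int(\Omega)$, which gives $\width\ge r$ and directly matches the definition of $(\mm_{x,y}^L)^+(\Omega)$.
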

Some remarks are in order. First of all, the equivalence between items (i) and (v) in the statement above is true even dropping the assumption that the metric space is complete (this will be proved in our forthcoming work \cite{CaputoCavallucci2024III}).
Secondly, in Theorem \ref{theo:main-intro-p=1} we just work with \emph{two fixed $x,y$} at the price of assuming local connectivity properties of the metric space. 
By looking at the previous statement for every couple of points, we achieve the next result, which is new and improves the mentioned result of \cite{Sylvester-Gong-21}.
\begin{corollary}
\label{cor:Poincaré_p=1}
    Let $(\X,\sfd,\mm)$ be a doubling metric measure space. Then it satisfies a $1$-Poincaré inequality if and only if there exist $C >0$ and $L \geq 1$ such that
    $$\width_{x,y}(A) \leq C\mm_{x,y}^L(A)$$
    for every $x,y\in\X$ and every $A\subseteq \X$ Borel.
\end{corollary}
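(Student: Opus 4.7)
The plan is to deduce Corollary \ref{cor:Poincaré_p=1} from Theorem \ref{theo:main-intro-p=1} applied pointwise for each pair $x,y\in\X$, together with Heinonen's classical pointwise characterization of the $1$-Poincar\'{e} inequality in the doubling setting (cf.\ \cite{Hei01}, \cite{HK00}). The only nontrivial work is verifying that the hypotheses of Theorem \ref{theo:main-intro-p=1}, namely path-connectedness and local $\Lambda$-quasiconvexity, are in force in both directions.

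For the forward direction, I would first recall that any doubling metric measure space supporting a $1$-Poincar\'{e} inequality is quasiconvex (see \cite{Cheeger99}, \cite{HK00}), hence path-connected and locally $\Lambda$-quasiconvex. Heinonen's characterization then yields the pointwise Riesz estimate \eqref{eq:pointwise_Poincaré_Riesz} uniformly for all $x,y\in\X$. Applying the implication (i)$\Rightarrow$(iii) of Theorem \ref{theo:main-intro-p=1} at each pair $x,y$ produces the width-Minkowski inequality with constants depending only on the doubling constant and the Poincar\'{e} data.

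For the converse, the first step is to extract path-connectedness and local quasiconvexity from the width hypothesis itself. Taking $A=\X$ in the assumption yields $\width_{x,y}(\X)\leq C\,\mm_{x,y}^L(\X)$. A standard dyadic decomposition of $B_{x,y}^L$ into annuli around $x$ and $y$, combined with the doubling property, bounds $\mm_{x,y}^L(\X)$ by a multiple of $\sfd(x,y)$ depending only on $L$ and the doubling constant. Hence $\X$ is $C''$-quasiconvex with $C''$ depending only on $C$, $L$ and the doubling constant; in particular it is path-connected and locally $\Lambda$-quasiconvex uniformly in $x,y$. Theorem \ref{theo:main-intro-p=1} (iii)$\Rightarrow$(i) then gives the Heinonen estimate \eqref{eq:pointwise_Poincaré_Riesz} uniformly over all pairs, and a final application of Heinonen's characterization upgrades it to a $1$-Poincar\'{e} inequality.

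The main subtlety I expect is bookkeeping the uniformity of the constants as one traverses the chain of quantitative equivalences in Theorem \ref{theo:main-intro-p=1}. The crucial point is that in the converse the quasiconvexity constant $C''$, and therefore the value of $\Lambda$ needed to invoke Theorem \ref{theo:main-intro-p=1}, depends only on $C$, $L$ and the doubling constant, and not on the chosen pair $x,y$; this guarantees that the resulting pointwise Heinonen estimates have uniform constants, which is precisely what Heinonen's characterization requires in order to conclude a genuine $1$-Poincar\'{e} inequality.
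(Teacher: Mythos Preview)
Your proposal is correct, and the forward direction is exactly the paper's argument. In the converse direction you take a slightly different route than the paper. You first extract global quasiconvexity by testing the width hypothesis on $A=\X$ and invoking the bound $\mm_{x,y}^L(\X)\le 8C_DL\,\sfd(x,y)$ (Lemma~\ref{prop:properties_mxy}), and only then apply Theorem~\ref{theo:main-intro-p=1} in full. The paper instead bypasses this step entirely: it observes (Remark~\ref{rmk:no_connectivity}) that the chain of implications (iii)$\Rightarrow$(iv)$\Rightarrow$(v)$\Rightarrow$(i) in Theorem~\ref{theo:main-intro-p=1} does \emph{not} use the path-connectedness or local quasiconvexity hypotheses, so one may pass directly from uniform $1$-set-connectedness to the Lipschitz pointwise estimate~\eqref{eq:Riesz_PtPI_Lipu}, and then conclude via Proposition~\ref{prop:Poincaré_equivalence_Pointwise}. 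Your detour through quasiconvexity is harmless and arguably more self-contained; the paper's route is shorter because it exploits the internal structure of the proof of Theorem~\ref{theo:main-intro-p=1}.
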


Indeed using the same language we introduced above, we can rephrase the results in \cite{Sylvester-Gong-21} for general $p\ge 1$ in terms of our definitions. Their results read as follows: if $\X$ satisfies a $p$-Poincar\'{e} inequality, then $\X$ is $(C,L,\Lambda)$ $p$-set-connected. Conversely, if $\X$ is $(C,L,\Lambda)$ $p$-set-connected, then it satisfies a $q$-Poincar\'{e} inequality for every $q >p$. We refer to Section \ref{sec:set-connectedness} for the definition of $(C,L,\Lambda)$ $p$-set-connectedness and the proof of this statement. Corollary \ref{cor:Poincaré_p=1} states the full equivalence between the two conditions (even with infinite $\Lambda$) in case $p=1$.
\vspace{2mm}

The proof of Theorem \ref{theo:main-intro-p=1} is relatively elementary, as it uses classical tools in analysis on metric spaces and the coarea inequality for the Minkowski content. We just want to sketch the proof of (iii) implies (v).
Given a set $\Omega$, we apply (iii) to the sets $B_r(\Omega)\setminus \Omega$ for $r$ sufficiently small getting
$$\width_{x,y}(B_r(\Omega)\setminus \Omega) \leq C\mm_{x,y}^L(B_r(\Omega) \setminus \Omega).$$
It is straightforward to check that $\width_{x,y}(B_r(\Omega)\setminus \Omega) \ge r$ if $\Omega \in \SS_{\rm top}(x,y)$. The conclusion follows by dividing by $\width_{x,y}(B_r(\Omega)\setminus \Omega)$ and taking the limit as $r \to 0$.
\vspace{1mm}

As suggested by the idea above, to compare the $1$-set-connectedness and the Minkowski content computed with respect to $\mm_{x,y}^L$ it is natural to introduce the following notion, which has the same dimensionality of the Minkowski content. The separating ratio between $x$ and $y$ of a set $A \subseteq \X$ is
\begin{equation}
    \label{eq:sep-ratio}
    \SR_{x,y}(A) := \frac{\mm_{x,y}^L(A)}{\width_{x,y}(A)}.
\end{equation}

\noindent With this notation the proof of (iv) implies (v) in Theorem \ref{theo:main-intro-p=1} gives
\begin{equation}
    \inf_{A \subseteq \X\text{ closed}} \SR_{x,y}(A) \leq \inf_{\Omega \in \SS_{\text{top}}(x,y)} (\mm_{x,y}^L)^+(\Omega)
\end{equation}
and Theorem \ref{theo:main-intro-p=1} says that a positive lower bound for one of the two infima is equivalent to a positive lower bound for the other one. However, inspired by the proof given above, it seems possible to approach directly the equivalence between (iv) and (v). Indeed we can quantitatively relate, with explicit constants, the infima of the two functionals. This is the content of the next result.

\begin{theorem}
\label{thm:main_equivalence_minima}
Let $(\X,\sfd,\mm)$ be a doubling, path-connected, locally $\Lambda$-quasiconvex metric measure space and let $x,y\in \X$. Then
\begin{equation}
    \label{eq:equivalence_of_minima_intro}
    \Lambda^{-1}\inf_{\Omega \in \SS_{\textup{top}}(x,y)} (\mm_{x,y}^L)^+(\Omega) \le \inf_{A \subseteq \X\, \textup{closed}} \SR_{x,y}(A)\le \inf_{\Omega \in \SS_{\textup{top}}(x,y)} (\mm_{x,y}^L)^+(\Omega).
\end{equation}
In particular, if $(\X,\sfd)$ is path connected and locally geodesic we have
\begin{equation}
    \label{eq:equality_of_minima_geodesic_case}
    \inf_{A \subseteq \X\, \textup{closed}} \SR_{x,y}(A)= \inf_{\Omega \in \SS_{\textup{top}}(x,y)} (\mm_{x,y}^L)^+(\Omega).
\end{equation}
\end{theorem}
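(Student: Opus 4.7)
The two inequalities in \eqref{eq:equivalence_of_minima_intro} are independent and I would attack them separately; the upper one is a direct approximation in the spirit of the sketch following Theorem \ref{theo:main-intro-p=1}, while the lower one is the genuinely new content and rests on the \emph{position function} alluded to in the abstract. The equality \eqref{eq:equality_of_minima_geodesic_case} then follows by specializing the lower inequality to $\Lambda=1$.

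For the upper inequality $\inf_A \SR_{x,y}(A) \le \inf_\Omega (\mm_{x,y}^L)^+(\Omega)$, I would fix $\Omega \in \SS_{\textup{top}}(x,y)$ and, for each small $r>0$ and each large $k\in\N$, take the closed annular approximation
\[ A_{r,k} := \{z \in \X : r/k \le \sfd(z,\Omega) \le r\}. \]
Running the one-dimensional coarea on the $1$-Lipschitz function $\sfd(\cdot,\Omega) \circ \gamma$ along an arclength-parametrized $\gamma \in \Gamma_{x,y}$ forces $\ell(\gamma \cap A_{r,k}) \ge r(1-1/k)$, so $\width_{x,y}(A_{r,k}) \ge r(1-1/k)$. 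Choosing $r_n \downarrow 0$ realizing $(\mm_{x,y}^L)^+(\Omega)$ and for which $\mm_{x,y}^L(\{\sfd(\cdot, \Omega) = r_n\}) = 0$ (automatic for a.e.\ $r$), one has $\mm_{x,y}^L(A_{r_n,k}) \le \mm_{x,y}^L(B_{r_n}(\Omega) \setminus \Omega)$; sending $r_n \downarrow 0$ and then $k \to \infty$ delivers $\inf_A \SR_{x,y}(A) \le (\mm_{x,y}^L)^+(\Omega)$.

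For the lower inequality, given a closed $A\subseteq\X$, the main tool is the position function
\[ p_A(z) := \inf\{\ell(\gamma \cap A) : \gamma \in \Gamma_{x,z}\}. \]
I would verify: \textbf{(a)} $p_A$ is $\Lambda$-Lipschitz, by extending a near-optimal path to $z$ by a near-geodesic from $z$ to $z'$ of length at most $\Lambda \sfd(z,z')$, whose intersection with $A$ contributes at most $\Lambda\sfd(z,z')$; \textbf{(b)} $p_A$ is locally constant on $\X \setminus A$, as paths can be rerouted through a small ball disjoint from $A$ without accruing any intersection with $A$; \textbf{(c)} $p_A(x) = 0$ and $p_A(y) = \width_{x,y}(A)$; \textbf{(d)} combining (a) and (c), the sublevel set $\Omega_t := \{p_A \le t\}$ is closed, contains $B_{t/\Lambda}(x)$ and is disjoint from $B_{(\width_{x,y}(A)-t)/\Lambda}(y)$, hence lies in $\SS_{\textup{top}}(x,y)$ for every $t \in (0, \width_{x,y}(A))$. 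Applying the coarea inequality for the Minkowski content to the $\Lambda$-Lipschitz function $p_A$, with its ``gradient'' confined to $A$ by (b), yields
\[ \int_0^{\width_{x,y}(A)} (\mm_{x,y}^L)^+(\Omega_t) \, \d t \le \Lambda \cdot \mm_{x,y}^L(A), \]
and a mean-value selection of $t_\ast$ gives $(\mm_{x,y}^L)^+(\Omega_{t_\ast}) \le \Lambda \, \SR_{x,y}(A)$; infimising over closed $A$ produces $\Lambda^{-1} \inf_\Omega (\mm_{x,y}^L)^+(\Omega) \le \inf_A \SR_{x,y}(A)$.

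The main obstacle I expect is pinning down the coarea inequality for the weighted Minkowski content $(\mm_{x,y}^L)^+$ so that it pays exactly $\Lambda$ on the right-hand side and transports the ``gradient supported in $A$'' property (b) faithfully; a secondary technical point is upgrading local $\Lambda$-quasiconvexity plus path-connectedness into the globally usable form needed to make $p_A$ finite on all of $\X$ and globally $\Lambda$-Lipschitz, as required in (d) for the sublevel sets to be honest separating sets with uniform buffers around $x$ and $y$.
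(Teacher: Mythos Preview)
Your proposal is correct and follows essentially the same route as the paper: the upper inequality via closed annuli around $\partial\Omega$, and the lower inequality via the position function, the coarea inequality for the Minkowski content, and a mean-value selection of a sublevel set. Two small points of contact with the paper's execution: the paper defines $\pos_A(z)$ as an infimum over curves in $\Gamma_{x,y}$ \emph{passing through} $z$ (rather than over $\Gamma_{x,z}$), which is equivalent to yours once the space is rectifiably path connected; and the technical obstacle you flag is dispatched not by promoting $p_A$ to a global $\Lambda$-Lipschitz function but by observing that $\mm_{x,y}^L$ has compact support (doubling), so the locally $\Lambda$-Lipschitz $\pos_A$ is genuinely Lipschitz on $\supp(\mm_{x,y}^L)$ and the coarea inequality of Proposition~\ref{prop:coarea_inequality_minkowski} applies there with $\lip\pos_A\le\Lambda\chi_A$.
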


Let us remark that \eqref{eq:equivalence_of_minima_intro} follows by the application of Theorem \ref{theo:main-intro-p=1} as well, with a non explicit control of the constant in the first inequality.
Moreover, the second statement in Theorem \ref{thm:main_equivalence_minima} does not follow from Theorem \ref{theo:main-intro-p=1}.
Besides the discussion about the constants, we believe that the importance of the statement relies on its proof, which is more geometrical rather than analytical, does not use neither the results nor the methods of Theorem \ref{theo:main-intro-p=1} and allows a better understanding of the separating ratio and the Minkowski content. We discuss it in detail in the next subsection.

%


\subsection{The position function and the proof of Theorem \ref{thm:main_equivalence_minima}}

We start the discussion on the proof of Theorem \ref{thm:main_equivalence_minima} with a toy example. We warn the reader that the sketch of the proof in this example is informal and a bit imprecise. Let us consider the two dimensional Euclidean space and a couple of points, say $x$ and $y$. Let us compute the infimum of the separating ratio between $x$ and $y$. The choice of the dumbbell-shaped domain $D$ as in Figure \ref{fig:dumbbell} as a competitor is not convenient for the minimization of the separating ratio. Indeed, we can reduce the volume of the set (with respect to $\mm_{x,y}^L$) by keeping unaltered its width. This procedure leads to the rectangle $R$ in Figure \ref{fig:dumbbell}. The next step is to chop $R$ in $N$ slimmer rectangles $R_i$ of width equal to the $N$-th fraction of the width of $R$. We then select one of such slimmer rectangles, say $R_1$, with the property that its separating ratio is less than or equal the separating ratio of $R$. This is a consequence of the following formula:
$$\frac{1}{N}\sum_{i=1}^N \SR_{x,y}(R_i) = \SR_{x,y}(R).$$
This formula is trivial in such a case and can be regarded as a discrete version of coarea formula. By iterating this procedure we find slimmer and slimmer rectangles whose separating ratio is smaller and smaller such that they converge to a line $L$. The half-plane with boundary $L$ and containing $x$ is a topological separating set between $x$ and $y$ and its Minkowski content can be estimated from above by the separating ratio of the converging rectangles.

\begin{figure}[h!]
    \centering
    \includegraphics[scale=0.8]{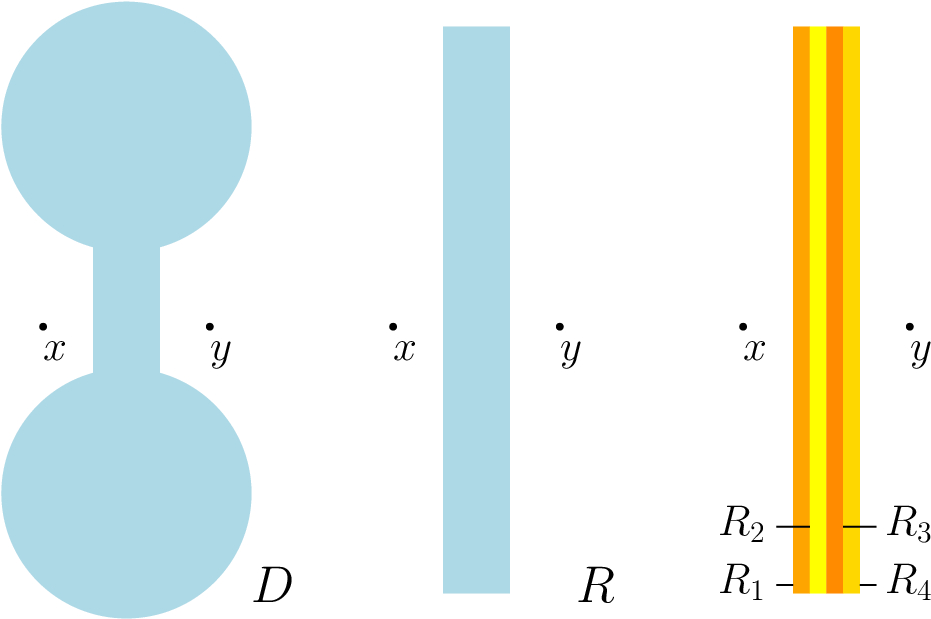}
    \caption{The picture gives an informal explanation of the proof of the Theorem in the the toy example of the two dimensional Euclidean case for a specific choice of $x,y$ and $D$ in the definition of separating ratio.}
    \label{fig:dumbbell}
\end{figure}

\vspace{5mm} 


The same ideas can be formalized in the general setting. The main ingredient is a specific function we defined for such a purpose, called the position function.\\
For $A \subseteq \X$ and $\gamma \in \Gamma_{x,y}$ we define  
\begin{equation}
    \label{eq:posgamma_continuous_definition}
    \pos_{\gamma,A}(z):= \inf_{s \in \gamma^{-1}(z)} \ell(\gamma \cap A,s), \qquad \text{and} \qquad \pos_A(z):=\inf_{\gamma \in \Gamma_{x,y}} \pos_{\gamma,A}(z),
\end{equation}
where $\ell(\gamma \cap A,s) := \int_0^s \chi_A(t) \vert \dot{\gamma}(t)\vert \,\d t$ is the length spent by $\gamma$ inside $A$ up to time $s$. See Section \ref{sec:minkowski_vs_separatingratio} for a precise discussion. The steps sketched in the example above can be formally formulated as follows.

\begin{itemize}
    \item \textbf{Reduction to rectangles}. Let $A \subseteq \X$ be such that $\width_{x,y}(A) > 0$. Then $A' := A\cap\{ 0< \pos_A(z) < \width_{x,y}(A) \}$ satisfies $\SR_{x,y}(A') \le \SR_{x,y}(A).$ A proof of this fact can be obtained with the same methods of Proposition \ref{prop:level_set_position_function}.
    \item \textbf{Reduction to the Minkowski content}. Let $A \subseteq \X$ be such that $\width_{x,y}(A) > 0$. Then there exists $t\in (0,\width_{x,y}(A))$ such that $\Omega:= \{\pos_A \leq t \}$ is a topological separating set from $x$ to $y$ and $\mm^+(\Omega) \leq \Lambda\,\SR_{x,y}(A)$. This is the way we prove Theorem \ref{thm:main_equivalence_minima}.
\end{itemize}

We remark that $\partial \Omega \subseteq \{\pos_A = t \}$ plays the role of the line $L$ in the toy example. These and other procedures will be investigated in detail in Section \ref{sec:minkowski_vs_separatingratio}. \\
The key observation of the position function is that it allows to fibrate a separating set into separating sets by looking at its level sets.
Note indeed that this is the most delicate point: considering again the toy example of the Euclidean space, it is easy to find an example of a couple of points and separating set for which for many choice of Lipschitz functions their level sets are not in general separating sets.

Since we believe that the argument in Section \ref{sec:minkowski_vs_separatingratio} is technical, in order to isolate the main ideas behind the proof, we propose a similar argument in the case of measure graphs. This is the content of Section \ref{sec:graph_theory}. Since the objects are discrete and the argument is more combinatorial, we explain more in detail how the continuous argument reduces in this case. To be more precise, we define the discretized version of separating sets, size, separating ratio and Minkowski content. Finally, we provide an explicit procedure, that takes in input a discretized separating set and gives as output another discretized separating set with no larger separating ratio but with smallest possible discretized width. For such sets the computation of the discretized separating ratio coincides with the computation of the discretized Minkowski content. This provides the exact equality of the minima of the two discretized functionals, see Theorem \ref{thm:equivalence_minimum_energies_discrete}.\\

An open question is if the equality of infima on measure graphs in Theorem \ref{thm:equivalence_minimum_energies_discrete} could be used to prove equality of the infima in the continuous setting. 
Every metric space can be approximated by a suitable locally finite graph by looking at a $\delta$-net for $\delta\ll 1$ and associating to this set of points a structure of measure graph (see for instance the construction in \cite{GillLopez2015} and Remark \ref{rmk:graph_approximation}). 
It would be interesting to understand the convergence of the discrete separating ratio and Minkowski content of the approximations to their continuous versions, but this is yet to be understood.
\subsection{Structure of the paper}
The paper is structured as follows. In Section \ref{sec:preliminaries} we recall definitions and properties we will need in the presentation. We recall preliminaries about curves, doubling measures, Poincar\'{e} inequality and Minkowski content. In Section \ref{sec:set-connectedness} we introduce the concept of $p$-set-connectedness and $p$-set-pencil and we prove some relations with the $p$-Poincar\'{e} inequality. We also show the complete equivalence for $p=1$ using Theorem \ref{theo:main-intro-p=1}. In Section \ref{sec:proofofthm1.4}, we prove Theorem \ref{theo:main-intro-p=1} and Corollary \ref{cor:Poincaré_p=1} following the scheme described in the introduction. In Section \ref{sec:minkowski_vs_separatingratio} we study the relation between the separating ratio and the Minkowski content of separating sets. The position function is introduced, its properties are studied and Theorem \ref{thm:main_equivalence_minima} is proved.
In Section \ref{sec:graph_theory} we introduce the discretized version of separating ratio, Minkowski content and position function and study the equivalent version of Theorem \ref{thm:main_equivalence_minima} in the setting of measure graphs. This section is self-contained and contains all the needed definitions for the discrete case. Finally, Appendix \ref{sec:revision_Poincare} contains a revised version of already known characterizations of Poincar\'{e} inequality via a curvewise approach, but formulated for a fixed couple of points. We also prove that all these conditions can be formulated equivalently either in terms of quasigeodesics with fixed bounded length or rectifiable curves.
\subsection{Acknowledgments}
The first author was supported by Academy of Finland, grants no. 321896. He currently acknowledges the support by the European Union’s Horizon 2020 research and innovation programme (Grant
agreement No. 948021). The first named author wishes to thank David Bate for fruitful discussions. We thank Tommaso Rossi and Pietro Wald for their comments on a preliminary version of the manuscript.

\section{Preliminaries}
\label{sec:preliminaries}
In this paper a metric measure space $(\X,\sfd,\mm)$ is a complete and separable metric space $(\X,\sfd)$ equipped with a nonnegative Borel measure $\mm$ that is finite on bounded sets. Given $A\subseteq \X$ and $r\geq 0$ we set $B_r(A):=\{ x:\, \sfd(x,A) < r\}$ and $\overline{B}_r(A):=\{ x:\, \sfd(x,A) \le r\}$, they are respectively the open and closed $r$-neighbourhood of $A$. For instance the open ball of center $x \in \X$ and radius $r$ is $B_r(x)$. We denote by $\mathscr{B}(\X)$ the class of Borel subsets of $\X$.\\
We denote by ${\rm Lip}(\X)$ the space of Lipschitz functions on $\X$ with values in $\R$. 
A function $u\colon \X \to \R$ is locally Lipschitz if for every $x\in \X$ there exist $r_x > 0$ and $\Lambda_x \ge 0$ such that $u\restr{B_{r_x}(x)}$ is $\Lambda_x$-Lipschitz. The set of maps that are locally Lipschitz is denoted by $\Lip_\loc(\X)$. Every locally Lipschitz map is Lipschitz on a compact set.
\begin{lemma}[{\cite[Theorem 4.2]{BeerGarrido2015}}]
\label{lemma:loclip_to_lip_up_to_scale}
    Let $(\X,\sfd)$ be a metric space, $K\subseteq \X$ be compact and $u \in \Lip_\loc(\X)$. Then $u\restr{K}$ is Lipschitz.
\end{lemma}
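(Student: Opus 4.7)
The plan is to combine a finite subcover argument with the continuity (hence boundedness) of $u$ on the compact set $K$, treating nearby and distant pairs of points in $K$ separately.

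First I would invoke the local Lipschitz hypothesis to associate to each $x \in K$ a radius $r_x > 0$ and a constant $\Lambda_x \ge 0$ making $u\restr{B_{r_x}(x)}$ $\Lambda_x$-Lipschitz. The open cover $\{B_{r_x/2}(x) : x \in K\}$ of $K$ admits, by compactness, a finite subcover $\{B_{r_{x_i}/2}(x_i)\}_{i=1}^n$. I would then set $L := \max_{i=1,\dots,n} \Lambda_{x_i}$. A Lebesgue number argument (or, equivalently, the fact that $K$ is covered by finitely many balls of the halved radii) produces a $\delta \in (0, \min_i r_{x_i}/2)$ such that any two points $x, y \in K$ with $\sfd(x,y) < \delta$ both lie in a common $B_{r_{x_i}}(x_i)$; for such pairs one gets
\begin{equation*}
    |u(x) - u(y)| \le \Lambda_{x_i} \sfd(x,y) \le L \sfd(x,y).
\end{equation*}

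Next I would handle pairs at distance at least $\delta$. Since $u \in \Lip_\loc(\X)$ is in particular continuous and $K$ is compact, $u(K)$ is bounded; let $M := \sup_K |u| < +\infty$. For any $x, y \in K$ with $\sfd(x,y) \ge \delta$,
\begin{equation*}
    |u(x) - u(y)| \le 2M \le \tfrac{2M}{\delta}\,\sfd(x,y).
\end{equation*}
Combining the two regimes gives that $u\restr{K}$ is Lipschitz with constant at most $\max\bigl(L, 2M/\delta\bigr)$.

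I do not foresee a real obstacle, since the lemma is essentially standard point-set topology; the only mildly delicate point is the treatment of far-apart pairs, where one cannot hope to chain local Lipschitz estimates without a connectedness assumption. The clean workaround is exactly the use of boundedness of $u$ on $K$, which converts the trivial bound $2M$ into a Lipschitz bound at scales $\ge \delta$. Everything else is a finite-subcover/Lebesgue-number routine.
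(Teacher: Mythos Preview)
Your argument is correct and entirely standard: the Lebesgue-number (or halved-radius) step controls nearby pairs by the maximum local constant, and continuity of $u$ on the compact $K$ gives the uniform bound $2M/\delta$ for far-apart pairs. There is no gap.

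As for comparison: the paper does not give its own proof of this lemma; it simply quotes the statement with a reference to \cite[Theorem 4.2]{BeerGarrido2015}. So there is nothing to compare your approach against within the paper itself. Your self-contained argument is the expected elementary one and would serve as a perfectly adequate replacement for the bare citation.
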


The \emph{local Lipschitz constant} $\lip u \colon \X \to \mathbb{R}$ of a function $u\colon \X \to \R$ is
$$\lip u (x):=\lims_{y \to x} \frac{|u(y)-u(x)|}{\sfd(y,x)} = \lim_{\delta \to 0} \sup_{y\in B_\delta(x)} \frac{|u(y)-u(x)|}{\sfd(y,x)} $$ 
with the convention that $\lip u (x)=0$ if $x$ is an isolated point. 

\subsection{Curves in metric spaces}

Let $C([0,1],\X)$ be the space of continuous curves $\gamma \colon [0,1]\to \X$ endowed with the sup distance $\sfd_\infty$, i.e.\ $\sfd_\infty(\gamma, \gamma'):=\sup_{t\in [0,1]}\sfd(\gamma(t),\gamma'(t))$ for $\gamma, \gamma' \in C([0,1],\X)$. Since $(\X,\sfd)$ is complete and separable, so $(C([0,1],\X),\sfd_{\infty})$ is.
The length of a curve is defined as
\begin{equation*}
    \ell(\gamma):= \sup \left\{ \sum_{i=0}^{N-1} \sfd(\gamma(t_i), \gamma(t_{i+1})), \,\{ t_i\}_{i=1}^N\subseteq [0,1],\,N \in \mathbb{N},\,
    0=t_0 < t_1 <\dots < t_N =1\right\}.
\end{equation*}
A curve $\gamma \in C([0,1],\X)$ is said to be rectifiable if $\ell(\gamma)<\infty$. Every rectifiable curve admits a Lipschitz reparametrization, i.e.\ there exists an increasing map $\phi \colon [0,1] \to [0,1]$ with $\phi(0)=0$ and $\phi(1)=1$ such that $\tilde{\gamma}:=\gamma \circ \phi$ is $\ell(\gamma)$-Lipschitz. \\
Given $x,y\in \X$ and $\Lambda \geq 1$ we define the set of $\Lambda$-quasigeodesics from $x$ to $y$ as
$$\Gamma_{x,y}^\Lambda := \{ \gamma \in C([0,1],\X)\,:\, \gamma(0) = x, \gamma(1) = y, \gamma \text{ is } \, \Lambda\sfd(x,y)\text{-Lipschitz} \}.$$
Observe that, by definition, every $\gamma \in \Gamma_{x,y}^\Lambda$ satisfies $\ell(\gamma) \leq \Lambda\sfd(x,y)$. Moreover every curve $\gamma$ satisfying $\ell(\gamma) \leq \Lambda\sfd(x,y)$ admits a reparametrization belonging to $\Gamma_{x,y}^\Lambda$. The set $\Gamma_{x,y}^\Lambda$ is compact if $\X$ is a proper metric space, as follows by Ascoli-Arzelà Theorem. We also define $\Gamma_{x,y} := \bigcup_{\Lambda \geq 1} \Gamma_{x,y}^\Lambda$. Notice that every rectifiable curve from $x$ to $y$ admits a reparametrization belonging to $\Gamma_{x,y}$. \\
The concatenation of two curves $\gamma,\eta \in C([0,1],\X)$ with $\eta(0) = \gamma(1)$ is the curve $\gamma \star \eta \in C([0,1],\X)$ defined as $(\gamma \star \eta)(t) = \gamma(2t)$ if $0\le t \le \frac{1}{2}$ and $(\gamma \star \eta)(t) = \eta(2t - 1)$ if $\frac{1}{2} \le t \le 1$. The concatenation of two Lipschitz curves is still Lipschitz. In particular if $x\neq y \neq z \neq x$ are points of $\X$ then $\gamma \star \eta \in \Gamma_{x,y}$ for every $\gamma \in \Gamma_{x,z}$ and every $\eta \in \Gamma_{z,y}$. Given a curve $\gamma \in C([0,1],\X)$ we define its reverse as $-\gamma \in C([0,1],\X)$, where $(-\gamma)(t) = \gamma(1-t)$. If $\gamma \in C([0,1],\X)$ and $0\le s_1 \le s_2 \le 1$ then the restriction of $\gamma$ to $[s_1,s_2]$ is the curve $\gamma\restr{[s_1,s_2]} \in C([0,1],\X)$ defined by $\gamma\restr{[s_1,s_2]}(t) = \gamma(s_1 +t(s_2-s_1))$ if $s_1 \le s_2$. If $\gamma$ is Lipschitz then also $\gamma\restr{[s_1,s_2]}$ is Lipschitz.
\vspace{2mm}

For every Lipschitz curve $\gamma\colon [0,1] \to \X$ the function $|\dot{\gamma}(\cdot)|\colon [0,1] \to [0,\infty)$ defined as $|\dot{\gamma}(t)|:=\lim_{h \to 0}\frac{\sfd(\gamma(t+h),\gamma(t))}{|h|} \in [0,\infty)$ is almost everywhere well defined and it is called the metric speed of $\gamma$. In particular this holds for all curves in $\Gamma_{x,y}$, for every $x,y\in \X$.
%
%
Given a Borel function $g \colon \X \to [0,\infty)$ and $\gamma \colon [0,1]\to \X$ Lipschitz we define
\begin{equation*}
    \int_\gamma g\,\d s := \int_0^1 g(\gamma(t))\,|\dot{\gamma}(t)|\,\d t.
\end{equation*}
This quantity does not depend on the possible Lipschitz reparametrizations of $\gamma$. \\
Let $u\colon \X \to \R$ be a function. A function $g\colon \X \to [0,+\infty]$ such that
$$\vert u(\gamma(1)) - u(\gamma(0))\vert \leq \int_\gamma g\,\d s$$
for every rectifiable curve $\gamma$ is called an \emph{upper gradient} of $u$. The integral on the right is made along any Lipschitz reparametrization of $\gamma$. The set of upper gradients of $u$ is denoted by ${\rm UG}(u)$.

\subsection{Connectivity properties}

In the sequel we will use often assume that our metric space $(\X,\sfd)$ satisfies some connectivity properties that we now define in detail. The metric space $(\X,\sfd)$ is said to be
\begin{itemize}
    \item[-] path connected if for every $x,y\in \X$ there exists a continuous path $\gamma \colon [0,1] \to \X$ with $\gamma(0) = x, \gamma(1) = y$;
    \item[-] rectifiable path connected if $\Gamma_{x,y} \neq \emptyset$ for every $x,y\in \X$;
    \item[-] $\Lambda$-quasigeodesic if $\Gamma_{x,y}^\Lambda \neq \emptyset$ for every $x,y\in \X$;
    \item[-] pointwise rectifiable path connected if for every $x\in \X$ there exists $r_x > 0$ such that $\Gamma_{x,y} \neq \emptyset$ for every $y\in B_{r_x}(x)$;
    \item[-] pointwise quasigeodesic at $x\in \X$ if there exist $r_x > 0$ and $\Lambda_x \ge 1$ such that $\Gamma_{x,y}^{\Lambda_x} \neq \emptyset$ for every $y\in B_{r_x}(x)$;
    \item[-] pointwise quasigeodesic if it is pointwise quasigeodesic at every $x\in \X$;
    \item[-] locally $\Lambda$-quasigeodesic if for every $x\in \X$ there exist $r_x > 0$ such that $\Gamma_{y,z}^{\Lambda} \neq \emptyset$ for every $y,z\in B_{r_x}(x)$.
\end{itemize}
There are other possible variants but we will not need them. If $\Lambda = 1$ in the definitions involving quasigeodesics, then we say that the space is (pointwise or locally or globally) geodesic.\\
There are trivial relations among the conditions. For instance if $(\X,\sfd)$ is pointwise quasigeodesic then it is pointwise rectifiable path connected. Another easy one is the following.
\begin{lemma}
\label{lemma:rectifiable_path}
    Let $(\X,\sfd)$ be a path connected, pointwise rectifiable path connected metric space. Then it is rectifiable path connected.
\end{lemma}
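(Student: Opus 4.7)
The plan is a standard compactness argument: I would start from a continuous path given by path connectedness, subdivide it into finitely many short subarcs whose endpoints all lie in a ball where the pointwise rectifiable path connectedness applies, and then concatenate the resulting rectifiable pieces.

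Fix $x,y\in\X$. By path connectedness there is $\sigma\in C([0,1],\X)$ with $\sigma(0)=x$ and $\sigma(1)=y$. The image $K:=\sigma([0,1])$ is compact, and by pointwise rectifiable path connectedness every $z\in K$ admits a radius $r_z>0$ such that $\Gamma_{z,w}\ne\emptyset$ for all $w\in B_{r_z}(z)$. The open cover $\{B_{r_z/2}(z)\}_{z\in K}$ of $K$ pulls back via $\sigma$ to an open cover of $[0,1]$; by the Lebesgue number lemma applied on $[0,1]$, there exists $\delta>0$ and a partition $0=t_0<t_1<\cdots<t_N=1$ with $|t_i-t_{i-1}|<\delta$ such that for each $i$ there is $z_i\in K$ with $\sigma([t_{i-1},t_i])\subseteq B_{r_{z_i}/2}(z_i)\subseteq B_{r_{z_i}}(z_i)$.

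In particular, both $\sigma(t_{i-1})$ and $\sigma(t_i)$ lie in $B_{r_{z_i}}(z_i)$, so by the choice of $r_{z_i}$ there exist rectifiable curves $\alpha_i\in\Gamma_{z_i,\sigma(t_{i-1})}$ and $\beta_i\in\Gamma_{z_i,\sigma(t_i)}$. The curve $\gamma_i:=(-\alpha_i)\star\beta_i$ (after Lipschitz reparametrization) is then a rectifiable curve from $\sigma(t_{i-1})$ to $\sigma(t_i)$; in case $\sigma(t_{i-1})=\sigma(t_i)$ or any of the points coincide, one simply replaces the corresponding piece by a constant curve, which is trivially rectifiable. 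Concatenating $\gamma_1\star\gamma_2\star\cdots\star\gamma_N$ and reparametrizing to $[0,1]$ yields, by the stability of (Lipschitz) concatenations recalled earlier, a rectifiable curve from $x$ to $y$. Hence $\Gamma_{x,y}\ne\emptyset$, which proves the claim.

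There is no real obstacle; the only mild nuisance is the bookkeeping when successive partition points collapse (or equal the chosen $z_i$), which is handled by reverting to constant subpaths as above.
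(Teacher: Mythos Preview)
Your proof is correct and follows essentially the same approach as the paper: use path connectedness to get a continuous curve, a compactness argument on $[0,1]$ to obtain a finite partition whose consecutive points can be joined by rectifiable arcs, and then concatenate. The only cosmetic difference is that the paper connects $\sigma(t_{i-1})$ to $\sigma(t_i)$ directly (choosing the partition so that $\sigma(t_{i+1})\in B_{r_{\sigma(t_i)}}(\sigma(t_i))$), whereas you route through an auxiliary center $z_i$ via the Lebesgue number lemma; both variants are standard and equivalent.
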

\begin{proof}
    Let $x,y\in \X$ and $\gamma \colon [0,1] \to \X$ be a continuous path such that $\gamma(0)=x$ and $\gamma(1)=y$. For every $t\in [0,1]$ let $r_{\gamma(t)}>0$ be the number provided by the definition of pointwise rectifiable path connectedness. By compactness of $[0,1]$ we can find a finite set of points $\{t_0 = 0, t_1,\ldots,t_N = 1\}$ such that $\sfd(\gamma(t_i), \gamma(t_{i+1})) \leq r_{\gamma(t_i)}$. For every $i=0,\ldots,N-1$ let $\eta_i \in \Gamma_{\gamma(t_i), \gamma(t_{i+1})}$ and define $\eta = \eta_0 \star \eta_1 \star \cdots \star \eta_{N-1}$. Then $\eta \in \Gamma_{x,y}$.
\end{proof}

Observe that there are locally geodesic spaces that are no connected.

\subsection{Measure properties and Poincaré inequality}

A metric measure space is said to be doubling if there exists $C_D >0$ such that for every $x \in \X$ and $r >0$ we have
\begin{equation*}
\mm(B(x,2r)) \le C_D \mm(B(x,r)).
\end{equation*}
We refer to $C_D$ as the doubling constant or we say that $(\X,\sfd,\mm)$ is $C_D$-doubling. A consequence of the definition of the doubling assumption is that $(\X,\sfd)$ is proper, i.e.\ closed and bounded sets are compact.\\
We recall the definition of the (local) Hardy Littlewood maximal function. Given $f \in L^1_{\rm loc}(\X)$ and $R >0$, we define
\begin{equation*}
   M_{R} f (x):=\sup_{0<r<R} \dashint_{B_r(x)} |f|\,\d \mm   
\end{equation*}
A metric measure space $(\X,\sfd,\mm)$ satisfies a (weak) $p$-Poincar\'{e} inequality if there exists $C_{P} \ge 1$ and $\lambda \ge 1$ such that 
\begin{equation}
\label{pPoincare-lip}
    \dashint_{B_r(x)} \left|u - \dashint_{B_r(x)}u\,\d \mm\right|\,\d \mm \le C_P r\,\left( \dashint_{B_{\lambda r}(x)} (\lip u)^p\,\d \mm \right)^{\frac{1}{p}}
\end{equation}
for every $u \in {\rm Lip}(\X)$. This is equivalent to the following condition: there exists $C_{P} \ge 1$ and $\lambda \ge 1$ such that 
\begin{equation}
\label{pPoincare-uppergrad}
    \dashint_{B_r(x)} \left|u - \dashint_{B_r(x)}u\,\d \mm\right|\,\d \mm \le C_P r\,\left( \dashint_{B_{\lambda r}(x)} g^p\,\d \mm \right)^{\frac{1}{p}}
\end{equation}
for every $u\colon \X \to \R$ Borel and every $g\in \UG(u)$. The equivalence between these two different conditions is proved in \cite[Theorem 2]{Kei03}.
\vspace{2mm}

The Poincaré inequality on a doubling metric measure space has several equivalent definitions that will be studied in big detail in the Appendix. Most of them are expressed in terms of the Riesz potential. Let $(\X,\sfd,\mm)$ be a metric measure space. Given $x,y\in \X$, the \emph{Riesz potential with poles at $x$ and $y$} $R_{x,y} \colon \X \to [0,\infty)$ is defined for every $z \in \X\setminus \lbrace x,y \rbrace$ as
\begin{equation}
\begin{aligned}
    R_{x,y}(z):&= \frac{\sfd(x,z)}{\mm(B_{\sfd(x,z)}(x))} + \frac{\sfd(y,z)}{\mm(B_{\sfd(y,z)}(y))}=: R_x(z)+R_y(z).
\end{aligned}
\end{equation}
Moreover, we define $R_{x,y}(x) = R_{x,y}(y) = 0$. For $L\geq 1$ we set $B_{x,y}^L := B_{2L\sfd(x,y)}(x)\cup B_{2L\sfd(x,y)}(y)$ and $\overline{B}_{x,y}^L := \overline{B}_{2L\sfd(x,y)}(x)\cup \overline{B}_{2L\sfd(x,y)}(y)$. The $L$-\emph{truncated Riesz potential with poles at $x,y$} is
\begin{equation}
    R_{x,y}^L(z):= \chi_{B_{x,y}^L}(z) R_{x,y}(z)
\end{equation}
for every $z \in \X\setminus \lbrace x,y \rbrace$. The corresponding Riesz measure is defined as
\begin{equation}
    \mm_{x,y}^L = R^L_{x,y}\,\mm. 
\end{equation}
It is a measure on $\X$ which is supported on $\overline{B}_{x,y}^L$. It has been studied for instance in \cite{Hei01}, \cite{Kei03} and \cite{CaputoCavallucci2024}. We recall one of its basic properties.
\begin{lemma}[\text{\cite[p. 72]{Hei01}, \cite[Prop. 2.2]{CaputoCavallucci2024}}]
\label{prop:properties_mxy}
    Let $(\X,\sfd,\mm)$ be a $C_D$-doubling metric measure space and fix $x,y\in \X$ and $L\geq 1$. Then $\mm_{x,y}^L(\X) \leq 8C_DL\sfd(x,y)$. In particular $\mm_{x,y}^L$ is a finite Borel measure.
\end{lemma}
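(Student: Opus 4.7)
The plan is to expand
$$\mm_{x,y}^L(\X) = \int_{B_{x,y}^L} R_x\,\d\mm + \int_{B_{x,y}^L} R_y\,\d\mm$$
using the additive splitting of the Riesz potential, and to estimate each of the two symmetric integrals separately. By the symmetry between $x$ and $y$ it is enough to bound, say, $\int_{B_{x,y}^L} R_x\,\d\mm$: the other integral is handled by the same argument with the roles of $x$ and $y$ interchanged.

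For the single-pole bound I would run the standard dyadic annular decomposition centred at $x$. Since $B_{2L\sfd(x,y)}(y) \subseteq B_{(2L+1)\sfd(x,y)}(x) \subseteq B_{4L\sfd(x,y)}(x)$ for $L \ge 1$, the whole set $B_{x,y}^L$ sits inside a ball $B_R(x)$ with $R$ a modest multiple of $L\sfd(x,y)$. Decompose $B_R(x)\setminus\{x\}$ into the dyadic shells $A_k := B_{R/2^k}(x) \setminus B_{R/2^{k+1}}(x)$ for $k \ge 0$. On $A_k$ one has $\sfd(x,z) \le R/2^k$ together with $\mm(B_{\sfd(x,z)}(x)) \ge \mm(B_{R/2^{k+1}}(x))$, so
\begin{equation*}
R_x(z) \le \frac{R/2^k}{\mm(B_{R/2^{k+1}}(x))} \qquad\text{on } A_k.
\end{equation*}
The doubling hypothesis gives $\mm(A_k) \le \mm(B_{R/2^k}(x)) \le C_D\,\mm(B_{R/2^{k+1}}(x))$, and the two factors $\mm(B_{R/2^{k+1}}(x))$ cancel in the product. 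Summing the resulting geometric series $\sum_{k\ge 0} C_D R/2^k = 2 C_D R$ yields the linear-in-$R$ bound $\int_{B_R(x)} R_x\,\d\mm \le 2 C_D R$.

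To land on the stated constant one keeps track of where the mass sits: the main contribution to $\int_{B_{x,y}^L} R_x\,\d\mm$ comes from the ball $B_{2L\sfd(x,y)}(x)$, on which the dyadic estimate gives directly $4 C_D L\sfd(x,y)$, while the tail $B_{2L\sfd(x,y)}(y)\setminus B_{2L\sfd(x,y)}(x)$ is absorbed by a single additional application of doubling (using $B_{2L\sfd(x,y)}(y) \subseteq B_{4L\sfd(x,y)}(x)$). Combining with the analogous estimate for $R_y$ yields $\mm_{x,y}^L(\X) \le 8 C_D L\sfd(x,y)$. The second conclusion — finite Borel measurability of $\mm_{x,y}^L$ — is then automatic since $R_{x,y}^L$ is a nonnegative Borel function vanishing outside a bounded set, and $\mm$ is Borel and finite on bounded sets. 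No real obstacle is anticipated: the whole argument is the classical Riesz-potential/maximal-function estimate, and the only delicate point is the careful bookkeeping of the constants produced by the geometric series and by the various doubling applications.
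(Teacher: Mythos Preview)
The paper does not actually prove this lemma; it simply cites Heinonen's book and the authors' earlier paper, so there is no in-text argument to compare against. Your dyadic-annulus approach is exactly the standard one from \cite{Hei01} and is correct in substance: the estimate $\int_{B_R(x)} R_x\,\d\mm \le 2C_D R$ follows as you describe, and applying it symmetrically gives a bound of the form $C\,C_D L\sfd(x,y)$.

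One small caveat: your bookkeeping for the exact constant $8$ is a bit optimistic. The ``tail'' $B_{2L\sfd(x,y)}(y)\setminus B_{2L\sfd(x,y)}(x)$ is not absorbed for free by a single doubling; it contributes a term comparable to the main one, so the argument as you wrote it produces a constant closer to $12$--$16$ rather than $8$. Landing exactly on $8C_D L$ requires the sharper accounting carried out in \cite[Prop.~2.2]{CaputoCavallucci2024}. This does not affect correctness---the precise constant is never used in the paper in a way that distinguishes $8$ from $16$---but since you flagged the constants as the delicate point, it is worth knowing that your sketch as stated gives the right order with a slightly worse numerical factor.
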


We state the pointwise version of the Poincaré inequality for doubling metric measure spaces.
\begin{proposition}[{\cite[Theorem 9.5]{Hei01}}, {\cite[Theorem 2]{Kei03}}]
\label{prop:Poincaré_equivalence_Pointwise}
    Let $(\X,\sfd,\mm)$ be a doubling metric measure space. The following are quantitatively equivalent:
    \begin{itemize}
        \item[(i)] $(\X,\sfd,\mm)$ satisfies a $p$-Poincar\'{e} inequality;
        \item[(ii)] $\exists C > 0$, $L\geq 1$ such that for every $x,y \in \X$ and every $u\colon \X \to \R$ Borel it holds
        \begin{equation}
        \label{eq:Riesz_PtPI}
            |u(x)-u(y)|^p\le C\, \sfd(x,y)^{p-1}\, \int_\X g^p\,\d \mm_{x,y}^{L}
        \end{equation} 
        for every $g\in \UG(u)$.
        \item[(iii)] $\exists C > 0$, $L\geq 1$ such that for every $x,y \in \X$ and every $u\colon \X \to \R$ Lipschitz it holds
        \begin{equation}
        \label{eq:Riesz_PtPI_Lipu}
            |u(x)-u(y)|^p\le C\, \sfd(x,y)^{p-1}\, \int_\X \lip u^p\,\d \mm_{x,y}^{L}.
        \end{equation} 
    \end{itemize}
\end{proposition}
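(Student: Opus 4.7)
The plan is to prove the equivalences by the cycle (iii) $\Rightarrow$ (ii) $\Rightarrow$ (i) $\Rightarrow$ (ii), using the observation that (ii) trivially implies (iii) since for Lipschitz $u$ the function $\lip u$ is an upper gradient, so applying (ii) with $g = \lip u$ gives (iii). All three equivalences are classical, and the strategy is to invoke the machinery developed by Heinonen-Koskela and refined by Keith.

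For the direction (iii) $\Rightarrow$ (ii), I would follow Keith's approach from \cite{Kei03}. Given a Borel $u$ with upper gradient $g$, the task is to promote the Lipschitz-only estimate to arbitrary pairs $(u,g)$. The main idea is a McShane extension / approximation scheme: for a fixed pair $x,y$ and a fixed scale $L$, truncate $g$ to replace it by a bounded upper gradient, then regularize $u$ into a Lipschitz function $u_\varepsilon$ whose local Lipschitz constant is controlled pointwise by a maximal function of $g$ on small scales. Applying (iii) to $u_\varepsilon$ and passing to the limit (using doubling to control the Riesz-weighted integral of the maximal function by the Riesz-weighted integral of $g$ itself, up to enlarging $L$) yields (ii) with constants quantitatively controlled by those of (iii) and $C_D$.

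For (i) $\Rightarrow$ (ii) the core tool is the Heinonen-Koskela telescoping chain-of-balls argument. Set $r=2\sfd(x,y)$ and $B_k=B_{2^{-k}r}(x)$; Poincaré inequality on each $B_k$ combined with doubling gives
\begin{equation*}
    |u_{B_k}-u_{B_{k+1}}| \le C\, 2^{-k}r \left(\dashint_{\lambda B_k} g^p\,\d\mm\right)^{1/p},
\end{equation*}
and summing together with Hölder's inequality in the form $\sum 2^{-k} c_k \le (\sum 2^{-k} c_k^p)^{1/p}(\sum 2^{-k})^{(p-1)/p}$ produces $|u(x)-u_{B_0}|^p \le C r^{p-1}\sum 2^{-k} \dashint_{\lambda B_k} g^p\,\d\mm$. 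The weighted sum on the right is exactly a discretized Riesz potential: for $z\in \X$ the geometric sum $\sum_{k:\, z\in \lambda B_k} 2^{-k}/\mm(\lambda B_k)$ is comparable, via doubling, to $\sfd(x,z)/(r\mm(B_{\sfd(x,z)}(x)))$. This rewrites the sum as a bound by $r^{-1}\int g^p R_x^L\,\d\mm$, up to a further doubling enlargement of $L$. Doing the symmetric estimate at $y$ and applying the triangle inequality yields \eqref{eq:Riesz_PtPI}. The main obstacle in this step is purely bookkeeping: tracking how the dilation $\lambda$ of the Poincaré inequality, the doubling constant, and the exponent $p$ combine to determine the final $L$ and $C$.

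Finally, for (ii) $\Rightarrow$ (i), I would test (ii) against pairs of points $x,y \in B_r(z_0)$ and integrate with respect to $\mm\otimes\mm$. By doubling, for every $z$ one has the pointwise bound $R_{x,y}^L(z)\chi_{B_r(z_0)\times B_r(z_0)}(x,y) \lesssim $ a kernel whose $\mm\otimes\mm$-integral against $g^p$ is controlled by $r^p \dashint_{B_{\lambda r}(z_0)} g^p\,\d\mm$ (with $\lambda$ proportional to $L$), which is the standard way of absorbing the Riesz factors by Fubini. Inserting this into the squared mean oscillation $\dashint_{B_r(z_0)}\dashint_{B_r(z_0)}|u(x)-u(y)|^p\,\d\mm(x)\d\mm(y)$ and taking $p$-th roots produces the $p$-Poincaré inequality in the form \eqref{pPoincare-uppergrad}, hence \eqref{pPoincare-lip} through Keith's equivalence.
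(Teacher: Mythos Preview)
The paper does not give its own proof of this proposition: it is stated with citations to \cite[Theorem 9.5]{Hei01} and \cite[Theorem 2]{Kei03} and used as a black box. So there is no ``paper's proof'' to compare against; your sketch is essentially a reconstruction of the classical arguments in those references, and the outline is correct. The telescoping chain for (i)$\Rightarrow$(ii), the double-integration/Fubini argument for (ii)$\Rightarrow$(i), and the trivial (ii)$\Rightarrow$(iii) are all standard and accurately described.

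Two minor remarks. First, your opening sentence ``the cycle (iii) $\Rightarrow$ (ii) $\Rightarrow$ (i) $\Rightarrow$ (ii)'' is not a cycle and does not match what you then actually do; just say you prove (i)$\Leftrightarrow$(ii) and (ii)$\Leftrightarrow$(iii) separately. Second, the step (iii)$\Rightarrow$(ii) is the only one with genuine content, and your description (``regularize $u$ into a Lipschitz $u_\varepsilon$ whose local Lipschitz constant is controlled by a maximal function of $g$'') is correct in spirit but hides the real work: Keith's argument does not quite produce a single Lipschitz approximant with $\lip u_\varepsilon \lesssim Mg$ pointwise; rather, he builds a family of discrete-convolution approximations and uses that (iii) self-improves to a statement about \emph{all} function--upper-gradient pairs by a limiting argument that also invokes the equivalence of \eqref{pPoincare-lip} and \eqref{pPoincare-uppergrad}. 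If you intend this as more than a pointer to the literature, that step deserves another sentence of detail.
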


The pointwise estimate \eqref{eq:Riesz_PtPI} remains quantitatively equivalent to \eqref{eq:Riesz_PtPI_Lipu} even if we require it only for a fixed couple of points. But for that we need to assume some connectivity property of the space, which are ensured for instance when a global Poincaré inequality holds. The proof will be given in Appendix \ref{sec:revision_Poincare}.

\begin{lemma}
\label{lemma:pointwise_differenti}
    Let $(\X,\sfd,\mm)$ be a locally $\Lambda$-quasiconvex metric measure space and let $x,y\in\X$. Then the following conditions are quantitatively equivalent:
    \begin{itemize}
        \item[(i)] there exist $C>0,L\ge 1$ such that $|u(x)-u(y)|^p\le C \sfd(x,y)^{p-1} \int_\X g^p(z)\,\d \mm_{x,y}^{L}(z)$ for every $u$ Borel and every $g\in \UG(u)$;
        \item[(ii)] there exist $C>0,L\ge 1$ such that $|u(x)-u(y)|^p\le C \sfd(x,y)^{p-1} \int_\X \lip u(z)^p\,\d \mm_{x,y}^{L}(z)$ for every $u \in \Lip(\X)$.
    \end{itemize}
\end{lemma}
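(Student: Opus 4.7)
The direction (i) $\Rightarrow$ (ii) is immediate: for any $u \in \Lip(\X)$ the local Lipschitz constant $\lip u$ is a Borel upper gradient of $u$ (since along any Lipschitz curve $\gamma$, $u\circ\gamma$ is Lipschitz with a.e.\ derivative dominated by $\lip u(\gamma)|\dot\gamma|$), so applying (i) to $u$ and $g = \lip u$ yields (ii).

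For (ii) $\Rightarrow$ (i), fix $u$ Borel and $g\in\UG(u)$, and assume $g\in L^p(\mm_{x,y}^L)$ (otherwise the inequality is vacuous). The strategy is to construct, for each $\epsilon>0$, a Lipschitz function $v_\epsilon\colon \X\to \R$ with $|v_\epsilon(x)-v_\epsilon(y)| \geq |u(x)-u(y)| - \epsilon$ and $\lip v_\epsilon(z) \leq \Lambda\,g(z) + \epsilon$ for $\mm_{x,y}^L$-a.e.\ $z$; then applying (ii) to $v_\epsilon$ and letting $\epsilon\to 0$ yields (i) with constant $C\Lambda^p$.

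For the construction, first use a Vitali-Carath\'eodory type result to replace $g$ by a lower semicontinuous $\tilde g \geq g$ that still belongs to $\UG(u)$ and is close to $g$ in $L^p(\mm_{x,y}^L)$. Define
\begin{equation*}
    w(z) := \inf\Bigl\{\int_\gamma \tilde g\,\d s \,:\, \gamma\in\Gamma_{x,z}\Bigr\}
\end{equation*}
on the rectifiable path component of $x$. The upper gradient inequality immediately yields $w(y)\geq |u(x)-u(y)|$. Local $\Lambda$-quasiconvexity provides, for $z$ in this component, $\Lambda$-quasigeodesics $\eta$ joining $z$ to every sufficiently close $z'$ of length at most $\Lambda\sfd(z,z')$, so that $w(z')\leq w(z)+\int_\eta \tilde g\,\d s$. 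One then extends a suitable truncation of $w$ to a globally Lipschitz $v_\epsilon$ on $\X$ via a McShane-type extension that does not worsen the $\lip$-estimate on $\overline{B}_{x,y}^L$.

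The main obstacle is the pointwise bound $\lip w(z)\leq \Lambda \tilde g(z)$: lower semicontinuity only controls $\tilde g$ from below near $z$, so $\int_\eta \tilde g\,\d s$ along a short $\Lambda$-quasigeodesic $\eta$ need not be of order $\Lambda\tilde g(z)\,\sfd(z,z')$, and $w$ may fail to be locally Lipschitz. The natural remedy is to truncate $\tilde g$ at an auxiliary level $M$, work with $\tilde g_M := \min(\tilde g, M)$ so that the corresponding $w_M$ is globally $\Lambda M$-Lipschitz and satisfies $\lip w_M(z)\leq \Lambda \tilde g_M(z)$ pointwise, and absorb the error from the exceptional set $\{\tilde g>M\}$ via Chebyshev's inequality together with $\tilde g \in L^p(\mm_{x,y}^L)$, letting $M\to\infty$ to conclude.
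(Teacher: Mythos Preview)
Your direction (i) $\Rightarrow$ (ii) is fine. The proposed argument for (ii) $\Rightarrow$ (i), however, has a genuine gap precisely at the point you yourself flag: the claim that the truncated function $w_M$ satisfies $\lip w_M(z)\le \Lambda\,\tilde g_M(z)$ pointwise is false. The estimate $|w_M(z)-w_M(z')|\le\int_\eta \tilde g_M\,\d s$ along a short $\Lambda$-quasigeodesic $\eta$ only yields $\lip w_M(z)\le \Lambda\,\limsup_{r\to 0}\sup_{B_r(z)}\tilde g_M$, i.e.\ a bound by the \emph{upper} semicontinuous envelope of $\tilde g_M$. Since $\tilde g$ (hence $\tilde g_M$) is lower semicontinuous, this envelope can exceed $\tilde g_M$ on a set of positive $\mm_{x,y}^L$-measure (take e.g.\ $\tilde g=\chi_U$ for an open $U$ whose boundary has positive measure). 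Truncation at level $M$ makes $w_M$ globally Lipschitz but does nothing to close the gap between $\tilde g_M$ and its upper envelope. It also creates a second problem: $\tilde g_M$ is no longer an upper gradient of $u$, so $w_M(y)\ge|u(x)-u(y)|$ fails by an error of the form $\int_\gamma(\tilde g-M)_+\,\d s$ along some uncontrolled curve $\gamma$, and Chebyshev on $\{\tilde g>M\}$ controls only the $\mm_{x,y}^L$-measure of that set, not curve integrals over it.

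The paper avoids both issues by not attempting a direct construction. In Theorem~\ref{thm:equivalence_pencil_Ap-connectedness} it shows that (ii) implies an $A_p$-type condition tested only against \emph{bounded Lipschitz} $g\ge 0$: for such $g$ the map $z\mapsto\inf_{\gamma\in\Gamma_{x,z}}\int_\gamma g\,\d s$ is locally Lipschitz with $\lip\le\Lambda g$ because $g$ is continuous, so (ii) applies after a McShane extension. A min-max argument then upgrades this to the existence of a $p$-pencil $\alpha\in\mathscr{P}(\Gamma_{x,y}^{L'})$ satisfying the estimate for \emph{all} Borel $g\ge 0$, which immediately gives the modulus lower bound and hence (i) via Proposition~\ref{theo:PI_Keith}. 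The passage from ``nice $g$'' to ``all Borel $g$'' is thus handled by duality rather than by approximation of the upper gradient itself.
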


\subsection{Minkowski content}
\label{subsec-Minkowski}

We recall the definition of Minkowski content of a set.
\begin{definition}
    Let $(\X,\sfd,\mm)$ be a metric measure space and let $A\subseteq \X$ be Borel. The Minkowski content of $A \subseteq \X$ is
\begin{equation*}
    \mm^{+}(A):=\limi_{r \to 0} \frac{\mm\left(\overline{B}_r(A) \setminus A \right)}{r}.
\end{equation*}
\end{definition}
%

The Minkowski content satisfies a coarea type inequality.
\begin{proposition}[\text{
\cite[Lemma 3.2]{AmbDiMarGig17}}]
\label{prop:coarea_inequality_minkowski}
    Let $(\X,\sfd,\mm)$ be a metric measure space and suppose $\mm$ is finite. Then for every bounded $u \in {\rm Lip}(\X)$ we have
    \begin{equation}
        \label{eq:coareain3_AGD}
        \int_{-\infty}^{\infty} \mm^{+}(\{ u \geq t\}) \,\d t \le \int_\X {\rm \lip} u \,\d \mm.
    \end{equation}
\end{proposition}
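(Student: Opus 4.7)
The plan is to slice both sides via Fubini and then apply Fatou's lemma twice. For $r>0$ define the maximal function
\begin{equation*}
    M_r u(x) := \sup_{y \in \overline{B}_r(x)} u(y),
\end{equation*}
which is Borel measurable since $u$ is continuous and $(\X,\sfd)$ is separable. The key observation is that $x$ lies in $\overline{B}_r(\{u\geq t\}) \setminus \{u\geq t\}$ exactly when $u(x) < t \leq M_r u(x)$, using that $\{u\geq t\}$ is closed. Applying Fubini to the indicator of the set $\{(t,x) : u(x) < t \leq M_r u(x)\}$ gives
\begin{equation*}
    \int_{-\infty}^{\infty} \mm\bigl(\overline{B}_r(\{u\geq t\}) \setminus \{u\geq t\}\bigr)\,\d t = \int_\X \bigl(M_r u - u\bigr)\,\d\mm,
\end{equation*}
and both sides are finite since $u$ is bounded and $\mm(\X)<\infty$.

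Dividing by $r$ and applying Fatou's lemma on the left (the integrand is non-negative) yields
\begin{equation*}
    \int_{-\infty}^{\infty} \mm^+(\{u\geq t\})\,\d t \leq \liminf_{r\to 0^+} \int_\X \frac{M_r u - u}{r}\,\d\mm.
\end{equation*}
If $\Lambda$ is a global Lipschitz constant for $u$, then $0 \leq (M_r u - u)/r \leq \Lambda$, so by reverse Fatou, applicable since $\Lambda$ is $\mm$-integrable, the right-hand side is bounded above by
\begin{equation*}
    \int_\X \limsup_{r \to 0^+} \frac{M_r u(x) - u(x)}{r}\,\d\mm(x).
\end{equation*}
The final pointwise bound $\limsup_{r\to 0^+}(M_r u(x) - u(x))/r \leq \lip u(x)$ follows immediately from the definitions: for $y \in \overline{B}_r(x) \setminus \{x\}$ with $u(y) > u(x)$ one has $(u(y) - u(x))/r \leq (u(y)-u(x))/\sfd(y,x) \leq |u(y)-u(x)|/\sfd(y,x)$, and taking the supremum over such $y$ and then $\limsup_{r \to 0^+}$ recovers $\lip u(x)$. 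Chaining the three inequalities yields the claim.

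The main obstacle is the set-theoretic identity $\overline{B}_r(\{u\geq t\}) \setminus \{u\geq t\} = \{u(x)<t\leq M_r u(x)\}$: the inclusion $\subseteq$ requires that the distance from $x$ to the closed set $\{u \geq t\}$ be realized by some point in $\overline{B}_r(x)$, which in the absence of properness may hold only up to an $\mm$-null set of exceptional $x$ for each $r$. Since the outer integration in $t$ (equivalently, since we may approximate $\overline{B}_r$ by $\overline{B}_{r+\varepsilon}$ and send $\varepsilon\to 0^+$) absorbs any such boundary discrepancy, the argument is unaffected.
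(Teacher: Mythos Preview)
Your argument is exactly the one from \cite[Lemma 3.2]{AmbDiMarGig17} that the paper cites without reproducing: introduce the sup-function $M_r u$, turn the $t$-integral into $\int (M_r u - u)\,\d\mm$ via Fubini, and pass to the limit with Fatou below and reverse Fatou (dominated by the global Lipschitz constant) above.

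One small correction on your last paragraph: the assertion that the exceptional set where $\sfd(x,\{u\ge t\})=r$ is not attained is $\mm$-null has no justification and need not be true. Fortunately you do not need equality at all, only the inequality
\[
\int_{-\infty}^{\infty}\mm\bigl(B_r(\{u\ge t\})\setminus\{u\ge t\}\bigr)\,\d t \le \int_\X (M_r u - u)\,\d\mm,
\]
which follows from the clean inclusion $B_r(\{u\ge t\})\setminus\{u\ge t\}\subseteq\{u<t\le M_r u\}$ (open neighbourhood on the left): if $\sfd(x,\{u\ge t\})<r$ there is some $y\in\overline{B}_r(x)$ with $u(y)\ge t$. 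Since the open- and closed-neighbourhood Minkowski contents agree, this is all that is required; your parenthetical fix via $\overline{B}_{r+\varepsilon}$ amounts to the same thing.
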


\section{The $p$-set-connectedness and the $p$-set-pencil}
\label{sec:set-connectedness}

The goal of this section is to introduce new natural conditions on a metric measure space and show their relation with the Poincar\'{e} inequality. These conditions are inspired by the one studied in \cite{ErikssonBique2019} and \cite{Sylvester-Gong-21}. Let $(\X,\sfd,\mm)$ be a metric measure space. Given a set $A\subseteq \X$ and a rectifiable curve $\gamma \colon [0,1] \to \X$ we define the length of $\gamma$ in $A$ as
$$\ell(\gamma \cap A) := \int_\gamma \chi_A \,\d s.$$
Let $x,y\in \X$, $x\neq y$ and $A\subseteq \X$ Borel. We define the $\Lambda$-width and the width of $A$
as
$$\width^\Lambda_{x,y}(A) := \inf_{\gamma \in \Gamma_{x,y}^\Lambda} \ell(\gamma \cap A), \text{ and respectively } \width_{x,y}(A) := \inf_{\gamma \in \Gamma_{x,y}} \ell(\gamma \cap A),$$
where $\Lambda\geq 1$. We use the usual convention $\inf \emptyset = +\infty$. In order to simplify the notations we will often consider $\width_{x,y}$ as the limit case of $\width_{x,y}^\Lambda$ with $\Lambda = \infty$. The same convention is used in the following definition.

\begin{definition}
    Let $C>0$, $L\geq 1$ and $\Lambda \in [1,+\infty]$. We say that $(\X,\sfd,\mm)$ is $(C,L,\Lambda)$ $p$-set-connected at $x,y\in \X$ if
\begin{equation}
    \label{eq:defin_set_connectedness_bounded}
    \left(\width_{x,y}^\Lambda(A)\right)^p\leq C\sfd(x,y)^{p-1}\mm_{x,y}^L(A) \text{ for all } A\subseteq \X  \text{ Borel.}
\end{equation}
If $\Lambda = +\infty$ we will simply say that $(\X,\sfd,\mm)$ is $(C,L)$ $p$-set-connected at $x,y\in \X$.\\
We say that $(\X,\sfd,\mm)$ has a $(C,L,\Lambda)$ $p$-set-pencil between $x,y\in \X$ if there exists $\alpha \in \mathscr{P}(\Gamma_{x,y}^\Lambda)$, i.e. $\alpha$ is a probability measure on the space $\Gamma_{x,y}^\Lambda$, such that
\begin{equation}
    \label{eq:defin_set_pencil_bounded}
    \left(\int \ell(\gamma \cap A) \,\d\alpha(\gamma)\right)^p\leq C\sfd(x,y)^{p-1}\mm_{x,y}^L(A) \text{ for all } A\subseteq \X  \text{ Borel}.
\end{equation}
Also in this case if $\Lambda = +\infty$ we will simply say that $(\X,\sfd,\mm)$ has a $(C,L)$ $p$-set-pencil between $x,y\in \X$.
\end{definition}

These definitions are special cases of properties that are equivalent to the pointwise Poincaré inequality as studied in Appendix \ref{sec:revision_Poincare}. For instance the $(C,L,\Lambda)$ $p$-set-connectedness property correspond to condition (ii) of Theorem \ref{thm:equivalence_pencil_Ap-connectedness} applied to the functions $\chi_A$, when $\Lambda < +\infty$. When $\Lambda = +\infty$ it is related to condition (iii) of Theorem \ref{thm:equivalence_pencil_Ap-connectedness}, which is formulated in terms of $1$-Lipschitz functions. A similar relation holds between the $p$-set-pencil conditions and conditions (iv) and (v) of Theorem \ref{thm:equivalence_pencil_Ap-connectedness}.\\
There are easy relations between the notions introduced above and the validity of the pointwise estimate \eqref{eq:Riesz_PtPI}. 

\begin{lemma}
\label{lemma:Poincaré_to_set_connectedness}
    Let $(\X,\sfd,\mm)$ be a path connected, pointwise quasiconvex doubling metric measure space. Let $x,y\in \X$. Then
    \begin{itemize}
        \item[(i)] if $(\X,\sfd,\mm)$ has a $(C,L,\Lambda)$ $p$-set-pencil between $x,y$ then it has a $(C,L)$ $p$-set-pencil between $x,y$;
        \item[(ii)] if $(\X,\sfd,\mm)$ has a $(C,L,\Lambda)$ $p$-set-pencil between $x,y$ then it is $(C,L,\Lambda)$ $p$-set-connected at $x,y$;
        \item[(iii)] if $(\X,\sfd,\mm)$ is $(C,L,\Lambda)$ $p$-set-connected at $x,y$ then it is $(C,L)$ $p$-set-connected at $x,y$;
        \item[(iv)] if \eqref{eq:Riesz_PtPI} holds at $x,y\in \X$ then $(\X,\sfd,\mm)$ has a $(C,L,\Lambda)$ $p$-set-pencil between $x,y$, quantitatively.
    \end{itemize}
\end{lemma}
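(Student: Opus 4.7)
First I would handle the three easy items, which all rest on the inclusion $\Gamma_{x,y}^\Lambda\subseteq \Gamma_{x,y}$. For (i), any $\alpha\in \mathscr{P}(\Gamma_{x,y}^\Lambda)$ pushes forward along this inclusion to an element of $\mathscr{P}(\Gamma_{x,y})$ satisfying \eqref{eq:defin_set_pencil_bounded} verbatim. For (iii), the inclusion gives $\width_{x,y}(A)\leq \width_{x,y}^\Lambda(A)$ for every Borel $A$, so the $(C,L,\Lambda)$-connectedness bound passes directly to the weaker $(C,L)$-connectedness. For (ii), fix $\alpha\in \mathscr{P}(\Gamma_{x,y}^\Lambda)$ satisfying \eqref{eq:defin_set_pencil_bounded}; since an infimum is bounded by any average,
\[
\width_{x,y}^\Lambda(A) \;=\; \inf_{\gamma\in \Gamma_{x,y}^\Lambda}\ell(\gamma\cap A)\;\leq\; \int \ell(\gamma\cap A)\,\d\alpha(\gamma),
\]
and raising to the $p$-th power yields \eqref{eq:defin_set_connectedness_bounded} with the same constants. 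A brief measurability check of $\gamma\mapsto \ell(\gamma\cap A)$ on the Polish space $\Gamma_{x,y}^\Lambda$, obtained by monotone approximation of $\chi_A$ from below by lower semicontinuous functions, makes the average well defined.

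The substantive claim is (iv), and the plan is to reduce it to the pencil characterization of the pointwise Poincar\'e inequality developed in Theorem \ref{thm:equivalence_pencil_Ap-connectedness} of the appendix, of which \eqref{eq:defin_set_pencil_bounded} is simply the specialisation to indicator functions. Under doubling and pointwise quasiconvexity, that theorem equates \eqref{eq:Riesz_PtPI} at $(x,y)$ with the existence of $\alpha\in \mathscr{P}(\Gamma_{x,y}^\Lambda)$ satisfying a general pencil estimate of the shape
\[
\int\!\!\int_\gamma g\,\d s\,\d\alpha(\gamma)\;\leq\; C'\,\sfd(x,y)^{1-1/p}\Bigl(\int g^p\,\d\mm_{x,y}^L\Bigr)^{1/p}
\]
for every Borel $g\geq 0$. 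Specialising to $g = \chi_A$ and recognising $\int_\gamma \chi_A\,\d s = \ell(\gamma\cap A)$ produces \eqref{eq:defin_set_pencil_bounded} with constants depending quantitatively on $C$, $L$, $\Lambda$, and the doubling constant, as required.

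The main obstacle, which I would defer entirely to the appendix, is the actual construction of the pencil measure $\alpha$ from the pointwise Poincar\'e inequality while keeping its support inside $\Gamma_{x,y}^\Lambda$. By Ascoli--Arzel\`a together with the properness provided by doubling, $\Gamma_{x,y}^\Lambda$ is compact, and pointwise quasiconvexity ensures it is nonempty once $\Lambda$ is taken larger than the local quasiconvexity constant at $(x,y)$. The construction then proceeds by a Hahn--Banach/Riesz representation argument: one verifies that the sublinear bound provided by \eqref{eq:Riesz_PtPI} dominates a positive linear functional on $C(\Gamma_{x,y}^\Lambda)$, which Riesz representation identifies with a Borel measure on the compact space $\Gamma_{x,y}^\Lambda$, after which a normalisation yields the required probability measure.
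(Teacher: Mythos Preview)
Your proof is correct and follows essentially the same approach as the paper: items (i)--(iii) are declared trivial there too, and item (iv) is obtained by invoking the pencil characterization of Theorem~\ref{thm:equivalence_pencil_Ap-connectedness} and specializing $g=\chi_A$, exactly as you do. The only cosmetic difference is that your parenthetical sketch of how the appendix builds $\alpha$ proposes Hahn--Banach/Riesz, whereas the paper uses Sion's min--max theorem (Proposition~\ref{prop:convex_optimization}); since you correctly defer the construction to the appendix, this does not affect the lemma's proof.
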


\begin{proof}
    The statements (i), (ii) and (iii) are trivial, so we just focus on (iv).
    We assume that \eqref{eq:Riesz_PtPI} holds at $x,y$. By item (iv) in Theorem \ref{thm:equivalence_pencil_Ap-connectedness}, we can find $C>0,L\ge 1$ and $\alpha \in \mathscr{P}(\Gamma_{x,y}^L)$ such that  
    $$\left(\int\int_\gamma \chi_A\,\d s\,\d \alpha(\gamma)\right)^p \leq C\sfd(x,y)^{p-1}\int \chi_A^p\,\d\mm_{x,y}^L=C\sfd(x,y)^{p-1}\,\mm_{x,y}^L(A),$$
    for every $A\subseteq \X$ Borel. This gives the $(C,L,L)$ $p$-set-pencil between $x$ and $y$. 
\end{proof}
As a corollary, we have the following statement.
\begin{corollary}
    Let $(\X,\sfd,\mm)$ be a doubling metric measure space. If $(\X,\sfd,\mm)$ satisfies a $p$-Poincaré inequality then there exists $C > 0, L,\Lambda \ge 1$ such that $(\X,\sfd,\mm)$ has a $(C,L,\Lambda)$ $p$-set-pencil between $x,y$ and it is $(C,L,\Lambda)$ $p$-set-connected at $x,y$, for every $x,y \in \X$.
\end{corollary}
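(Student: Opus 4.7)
The plan is to assemble the ingredients that are already available in the excerpt; no new geometric argument is needed, only a careful chaining of previous results.

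First, I would invoke Proposition~\ref{prop:Poincaré_equivalence_Pointwise} to pass from the (global) $p$-Poincar\'{e} inequality to the pointwise Riesz-potential estimate \eqref{eq:Riesz_PtPI}. This step supplies constants $C>0$ and $L\ge 1$, depending only on the doubling and Poincar\'{e} constants, such that $|u(x)-u(y)|^p\le C\sfd(x,y)^{p-1}\int g^p\,\d\mm_{x,y}^L$ holds for every Borel $u$, every $g\in\UG(u)$, and at every pair $x,y\in\X$ uniformly.

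Next, in order to apply Lemma~\ref{lemma:Poincaré_to_set_connectedness} I need to check its structural hypotheses, namely that $(\X,\sfd,\mm)$ is path connected and pointwise quasiconvex. These are automatic on doubling $p$-Poincar\'{e} spaces: by the classical results of \cite{Cheeger99}, \cite{HK00}, \cite{Kei03}, such a space is globally $\Lambda$-quasigeodesic, with $\Lambda$ depending only on the doubling and Poincar\'{e} constants. In particular it is path connected and pointwise quasigeodesic.

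With these two facts in hand, at any fixed pair $x,y\in\X$ I would apply Lemma~\ref{lemma:Poincaré_to_set_connectedness}(iv) to the pointwise estimate provided by Step~1; this yields a $(C,L,\Lambda)$ $p$-set-pencil between $x$ and $y$, with $\Lambda$ fixed by the global quasiconvexity constant and the remaining constants inherited from Proposition~\ref{prop:Poincaré_equivalence_Pointwise}. The $(C,L,\Lambda)$ $p$-set-connectedness at $x,y$ then follows immediately from part~(ii) of the same lemma. The only point that requires attention, and which is essentially bookkeeping rather than a genuine obstacle, is that all three constants $C,L,\Lambda$ can be chosen independently of the pair $x,y$; this uniformity is inherited directly from the uniform constants produced by Proposition~\ref{prop:Poincaré_equivalence_Pointwise} and from the global (hence pair-independent) quasiconvexity of doubling $p$-Poincar\'{e} spaces, which completes the proof.
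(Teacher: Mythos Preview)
Your proof is correct and follows exactly the paper's approach: invoke Proposition~\ref{prop:Poincaré_equivalence_Pointwise} to obtain the pointwise estimate \eqref{eq:Riesz_PtPI} with uniform constants, then apply Lemma~\ref{lemma:Poincaré_to_set_connectedness}. Your version is in fact slightly more complete, since you explicitly verify the path-connectedness and pointwise-quasiconvexity hypotheses of that lemma via the standard quasiconvexity of doubling $p$-Poincar\'{e} spaces, a step the paper leaves implicit.
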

\begin{proof}
    Since $(\X,\sfd,\mm)$ satisfies a $p$-Poincaré inequality then \eqref{eq:Riesz_PtPI} holds for every couple of points $x,y\in \X$ with constants depending only on the constants of the Poincaré inequality and the doubling constant, as recalled in Proposition \ref{prop:Poincaré_equivalence_Pointwise}. The thesis follows by Lemma \ref{lemma:Poincaré_to_set_connectedness}.
\end{proof}


There is a partial converse of this corollary, essentially due to \cite{Sylvester-Gong-21}.

\begin{proposition}
\label{prop:equivalence_connectivity_Sylvester}
Let $(\X,\sfd,\mm)$ be a doubling metric measure space. Suppose that there exist $C > 0, L \geq 1$ such that it is $(C,L,L)$ $p$-set-connected at $x,y$ for every $x,y \in \X$. Then it satisfies a $q$-Poincaré inequality for all $q>p$.
\end{proposition}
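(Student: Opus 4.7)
My plan is to verify the pointwise Riesz estimate of Proposition \ref{prop:Poincaré_equivalence_Pointwise}(ii) with exponent $q$, namely
\begin{equation*}
|u(x)-u(y)|^q \le C'\,\sfd(x,y)^{q-1}\int_\X g^q\,\d \mm^{L}_{x,y}
\end{equation*}
for every $x,y\in\X$ and every Borel $u$ with Borel upper gradient $g$, with constants uniform over all pairs; the $q$-Poincar\'{e} inequality then follows at once. Fix $x,y\in\X$, write $\sigma:=\sfd(x,y)$ and $I:=\int g^q\,\d\mm^L_{x,y}$, and for a parameter $\lambda_0>0$ to be optimized later and $k\ge 0$ introduce the superlevel sets $A_k:=\{g>2^k\lambda_0\}$. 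Chebyshev's inequality gives $\mm^L_{x,y}(A_k)\le (2^k\lambda_0)^{-q}I$, and the $(C,L,L)$ $p$-set-connectedness hypothesis yields
\begin{equation*}
\width^L_{x,y}(A_k) \le C^{1/p}\sigma^{(p-1)/p}\mm^L_{x,y}(A_k)^{1/p} \le C''\,\sigma^{(p-1)/p}(2^k\lambda_0)^{-q/p}I^{1/p}.
\end{equation*}

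The central technical step is to upgrade these individual level-set estimates into the existence of a \emph{single} curve $\gamma\in\Gamma^L_{x,y}$ that is ``good'' for all levels simultaneously, in the sense that $\ell(\gamma\cap A_k)\le 2\,\width^L_{x,y}(A_k)$ for every $k\ge 0$. I would produce such a $\gamma$ by iterative curve-surgery: starting from any provisional curve $\gamma^{(0)}\in\Gamma^L_{x,y}$, process successively the levels $k=0,1,2,\ldots$, and at the $k$-th step replace every excursion of $\gamma^{(k-1)}$ into $A_k$ between entry and exit points $z_1,z_2$ by a shorter detour provided by the set-connectedness hypothesis applied to the pair $(z_1,z_2)$ (which is legitimate because the hypothesis holds at \emph{every} pair with uniform constants). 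Since $\X$ is proper (a consequence of doubling), the space $\Gamma^L_{x,y}$ is compact, and a diagonal/compactness argument should secure convergence to a limit curve $\gamma$ enjoying the simultaneous bound.

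Granted such a $\gamma$, the upper gradient inequality and a dyadic decomposition of $g$ furnish
\begin{equation*}
|u(x)-u(y)| \le \int_\gamma g\,\d s \le \lambda_0\,\ell(\gamma) + \sum_{k\ge 0}2^{k+1}\lambda_0\,\ell(\gamma\cap A_k) \le \lambda_0 L\sigma + C'''\sigma^{(p-1)/p}\lambda_0^{1-q/p}I^{1/p}\sum_{k\ge 0}2^{k(1-q/p)},
\end{equation*}
where the geometric series converges precisely because $q>p$, which is exactly where the hypothesis on the exponents enters. Balancing the two terms through the optimal choice $\lambda_0\sim (L^{-p}\sigma^{-1}I)^{1/q}$ delivers $|u(x)-u(y)|\le C_{*}L^{(q-p)/q}\sigma^{(q-1)/q}I^{1/q}$; raising to the $q$-th power yields the pointwise estimate sought, and Proposition \ref{prop:Poincaré_equivalence_Pointwise} then gives the $q$-Poincar\'{e} inequality.

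The principal obstacle is the curve-surgery step: carrying out the iterative construction of a simultaneously good $\gamma$ rigorously is delicate, since later detours could a priori reintroduce excursions into previously processed level sets and one must ensure that the inductive bounds survive the passage to the limit in $\Gamma^L_{x,y}$. A less hands-on alternative would be to convert $(C,L,L)$ $p$-set-connectedness into a pencil-type inequality by a minimax/averaging argument on the bilinear pairing $(\gamma,A)\mapsto\ell(\gamma\cap A)$, producing a probability measure $\alpha$ on $\Gamma^L_{x,y}$; averaging the upper gradient inequality against $\alpha$ and applying H\"older would then recover the pointwise $q$-Poincar\'{e} estimate directly, bypassing the need to construct a distinguished $\gamma$.
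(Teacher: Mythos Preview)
Your proposal diverges from the paper's proof, which is essentially a one-step reduction: it checks that $(C,L,L)$ $p$-set-connectedness implies the \emph{max-connectedness} condition of \cite[Definition~2.12]{Sylvester-Gong-21} (via the standard bound $\int_{B_\rho(x)} \chi_A R_x\,\d\mm \le \rho\, M_\rho(\chi_A)(x)$ to pass from the Riesz measure to the maximal function), and then invokes \cite[Theorem~2.19 \& Lemma~2.20]{Sylvester-Gong-21} as a black box. Your direct attack on the pointwise estimate is in effect an attempt to reprove that black box, and the gap you yourself flag is genuine.

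Concretely, the surgery step is not well-defined as written. Applying set-connectedness at the excursion endpoints $(z_1,z_2)$ controls $\width^L_{z_1,z_2}(A_k)$ in terms of $\mm^L_{z_1,z_2}(A_k)$, a Riesz measure with the \emph{wrong poles}; there is no a~priori relation to $\mm^L_{x,y}(A_k)$, so the detour need not satisfy the bound you want. Moreover each detour lies in $\Gamma^L_{z_1,z_2}$, and concatenating several of them can push the total length above $L\sfd(x,y)$, so the limit curve need not stay in $\Gamma^L_{x,y}$. Finally, since the level sets are nested ($A_0\supseteq A_1\supseteq\cdots$), a detour that shortens the time in $A_k$ may well add length in $A_{k-1}\setminus A_k$, and you give no mechanism to control this. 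The actual constructions in the literature do not run a surgery over level sets of a fixed $g$; they iterate over \emph{scales} using the max-connected formulation, which is why the paper reduces to that condition first.

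Your fallback --- obtaining a $p$-set-pencil from $p$-set-connectedness via a minimax argument on $(\gamma,A)\mapsto\ell(\gamma\cap A)$ --- is precisely the step the paper singles out in the Remark following the proposition as \emph{unclear} for $p>1$. The bilinear form is linear in $\gamma$ but not obviously concave/convex in $A$ in a way that lets Sion's theorem bite, so this alternative is not a shortcut but an open problem.
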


Observe that the assumption $L = \Lambda$ in the statement is not restrictive.

\begin{proof}
    We claim that $(\X,\sfd,\mm)$ is $(L',C',p)$-max connected in the sense of \cite[Definition 2.12]{Sylvester-Gong-21}, where $L'=2L+1$ and $C' = (7CC_DL)^{\frac{1}{p}}$. \\
    Let us take $A \subseteq \X$ with $\max\lbrace M_{(2L+1)\sfd(x,y)}(\chi_A)(x), M_{(2L+1)\sfd(x,y)}(\chi_A)(y) \rbrace < \tau$, $\tau \in (0,1]$. For every $\varepsilon > 0$ we can find, by \eqref{eq:defin_set_connectedness_bounded}, a curve $\gamma_\varepsilon \in \Gamma_{x,y}^L$ such that $\ell(\gamma_\varepsilon \cap A) \leq C^{\frac{1}{p}}\sfd(x,y)^{\frac{p-1}{p}}\mm_{x,y}^L(A)^\frac{1}{p} + \varepsilon$.    
    We recall the classical bound (see \cite[p.72]{Hei01}): for every Borel $0\le g$ and $\rho >0$, we have $\int_{B_{\rho}(x)} g\,R_x\,\d \mm \le \rho M_{\rho}g(x)$ for every $x \in \X$. Applying it, we can estimate
    \begin{equation}
        \begin{aligned}
            \mm_{x,y}^L(A) &= \int_{B_{x,y}^L} \chi_A(z)(R_x(z) + R_y(z))\, \d\mm \\ &\leq \int_{B_{(2L+1)\sfd(x,y)}(x)} \chi_A(z)R_x(z)\,\d\mm + \int_{B_{(2L+1)\sfd(x,y)}(y)} \chi_A(z)R_y(z)\,\d\mm \\
            &\leq C_D(2L+1)\sfd(x,y)\left(M_{(2L+1)\sfd(x,y)}(\chi_A)(x) + M_{(2L+1)\sfd(x,y)}(\chi_A)(y)\right) \\
            &\leq 2C_D(2L+1)\sfd(x,y)\tau.
        \end{aligned}
    \end{equation}
    So we get $\ell(\gamma_\varepsilon \cap A) \leq (6CC_DL)^{\frac{1}{p}}\tau^\frac{1}{p}\sfd(x,y) + \varepsilon$, where we used that $L\geq 1$ so $2L+1 \leq 3L$. Choosing $\varepsilon$ small enough we get $\ell(\gamma_\varepsilon \cap A) \leq (7CC_DL)^{\frac{1}{p}}\tau^\frac{1}{p}\sfd(x,y)$. Set $\gamma := \gamma_\varepsilon$ for this particular choice of $\varepsilon$. Then we have $\gamma \in \Gamma_{x,y}^L \subseteq \Gamma_{x,y}^{L'}$ and $\ell(\gamma \cap A)\leq C'\tau^\frac{1}{p}\sfd(x,y)$. This shows that $(\X,\sfd,\mm)$ is $(L',C',p)$-max connected.\\
    The fact that $(\X,\sfd,\mm)$ satisfies a $q$-Poincaré inequality for all $q>p$ follows by \cite[Theorem 2.19 \& Lemma 2.20]{Sylvester-Gong-21}.
\end{proof}

\begin{remark}
    For the proof of the $q$-Poincaré inequality using the methods of \cite{Sylvester-Gong-21} two things are needed: a bound to the length of the curves involved in the max-connectedness property of \cite{Sylvester-Gong-21}, which corresponds to consider the version of our $p$-set-connectedness with $\Lambda < \infty$, and the validity of \eqref{eq:defin_set_connectedness_bounded} \emph{for all couple} of points $x,y\in \X$. Both these properties are in contrast with the equivalences we show in Appendix \ref{sec:revision_Poincare}. It is unclear to us if these two properties are necessary to infer the validity of \eqref{eq:Riesz_PtPI} for $q>p$ at fixed points $x,y\in \X$. There are two difficulties in adapting the proof of Theorem \ref{thm:equivalence_pencil_Ap-connectedness} in this setting. First of all it is not clear how to get a $p$-set-pencil from a $p$-set-connectedness. Secondly it is not clear if the existence of a $p$-set-pencil \emph{between $x,y$} implies \eqref{eq:Riesz_PtPI} for $q>p$ at $x,y$. 
\end{remark}

In the special case $p=1$ we can answer completely in the affirmative to the questions raised in the previous remark, and the situation is completely similar to what we see in Appendix \ref{sec:revision_Poincare}. In particular in the next theorem, which is the setwise analogous of Theorem \ref{thm:equivalence_pencil_Ap-connectedness}, we see how the equivalences work if we only know the conditions at two points $x,y$. This is completely new since, as recalled above, the methods of \cite{Sylvester-Gong-21} do not provide a $1$-Poincaré inequality under a $(L,C,1)$-max connectedness assumption, but only a $q$-Poincaré inequality for every $q>1$. The other difference, as already remarked, is that we do not need that the assumption holds for every couple of points, but just for a fixed one.

\begin{theorem}
\label{thm:equivalence_1-connectivity}
    Let $(\X,\sfd,\mm)$ be a path connected, locally $\Lambda$-quasiconvex, doubling metric measure space and let $x,y\in \X$. The following conditions are quantitatively equivalent:
    \begin{itemize}
        \item[(i)] $(\X,\sfd,\mm)$ satisfies \eqref{eq:Riesz_PtPI} at $x,y$ for $p=1$;
        \item[(ii)] $(\X,\sfd,\mm)$ is $(C,L,\Lambda')$ $1$-set-connected at $x,y$;
        \item[(iii)] $(\X,\sfd,\mm)$ is $(C,L)$ $1$-set-connected at $x,y$;
        \item[(iv)] $(\X,\sfd,\mm)$ has a $(C,L,\Lambda')$ $1$-set-pencil between $x,y$;
        \item[(v)] $(\X,\sfd,\mm)$ has a $(C,L)$ $1$-set-pencil between $x,y$.
    \end{itemize}
\end{theorem}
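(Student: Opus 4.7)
The plan is to establish a cycle of implications, reducing the entire theorem to the single nontrivial step (iii) $\Rightarrow$ (i), which I will bridge through the condition \textup{(BMC)}$_{x,y}$ as sketched in the introduction. The remaining implications fall directly out of Lemma~\ref{lemma:Poincaré_to_set_connectedness}: (i) $\Rightarrow$ (iv) is Lemma~\ref{lemma:Poincaré_to_set_connectedness}(iv); (iv) $\Rightarrow$ (ii) is Lemma~\ref{lemma:Poincaré_to_set_connectedness}(ii); (iv) $\Rightarrow$ (v) is immediate, since a probability measure on $\Gamma_{x,y}^{\Lambda'}$ can be viewed as a probability measure on the larger set $\Gamma_{x,y}$ with the same testing inequality; (v) $\Rightarrow$ (iii) is Lemma~\ref{lemma:Poincaré_to_set_connectedness}(ii) specialised to $\Lambda=\infty$; and (ii) $\Rightarrow$ (iii) is Lemma~\ref{lemma:Poincaré_to_set_connectedness}(iii), using $\width_{x,y}(A)\leq\width_{x,y}^{\Lambda'}(A)$.

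For (iii) $\Rightarrow$ \textup{(BMC)}$_{x,y}$, I would fix $\Omega\in\SS_{\textup{top}}(x,y)$ with separating radius $r_0>0$ and, for every $r\in(0,r_0)$, apply the $(C,L)$ $1$-set-connectedness to the collar $A_r:=B_r(\Omega)\setminus \Omega$, obtaining $\width_{x,y}(A_r)\le C\,\mm_{x,y}^L(A_r)$. The geometric core of the argument is the bound $\width_{x,y}(A_r)\ge r$: for any $\gamma\in \Gamma_{x,y}$, set
\begin{equation*}
    s_0:=\sup\{s\in[0,1]:\gamma(s)\in\Omega\},\qquad s_1:=\inf\{s\ge s_0:\sfd(\gamma(s),\Omega)\ge r\}.
\end{equation*}
Since $x\in\Omega$ and $\Omega$ is closed one has $\gamma(s_0)\in\Omega$, and since $\sfd(y,\Omega)\ge r$ one has $s_1\le 1$. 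Thus $\gamma((s_0,s_1))\subseteq A_r$ and the triangle inequality together with $\sfd(\gamma(s_1),\Omega)\ge r$ gives $\ell(\gamma|_{[s_0,s_1]})\ge\sfd(\gamma(s_0),\gamma(s_1))\ge r$. Dividing by $r$ and taking $\liminf_{r\to 0^+}$ produces $(\mm_{x,y}^L)^+(\Omega)\ge 1/C$.

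For \textup{(BMC)}$_{x,y}$ $\Rightarrow$ (i), I would take $u\in\Lip(\X)$ bounded with $u(x)<u(y)$ (the other cases being trivial or symmetric) and observe that for every $t\in(u(x),u(y))$ the sublevel set $\Omega_t:=\{u\leq t\}$ lies in $\SS_{\textup{top}}(x,y)$: it is closed, and by continuity of $u$ there exists $r>0$ with $u<t$ on $B_r(x)$ and $u>t$ on $B_r(y)$. Applying Proposition~\ref{prop:coarea_inequality_minkowski} to $-u$ with the finite measure $\mm_{x,y}^L$ (finite by Lemma~\ref{prop:properties_mxy}) and using the lower bound \textup{(BMC)}$_{x,y}$ for each $t$ in the relevant interval,
\begin{equation*}
    \int_\X \lip u \,\d \mm_{x,y}^L \geq \int_{u(x)}^{u(y)}(\mm_{x,y}^L)^+(\Omega_t)\,\d t \geq c\,(u(y)-u(x)).
\end{equation*}
A truncation $u_N:=\max(\min(u,N),-N)$ with $\lip u_N\leq \lip u$ extends the estimate to all $u\in \Lip(\X)$, and then Lemma~\ref{lemma:pointwise_differenti} converts the Lipschitz form into the upper-gradient form of (i).

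The main obstacle is the geometric lemma $\width_{x,y}(B_r(\Omega)\setminus \Omega)\geq r$, which turns a purely set-theoretic connectedness condition into a quantitative Minkowski-content bound and is the conceptual heart of the cycle; it requires careful control of where a curve crosses the boundary of $\Omega$ and its $r$-collar. Once this is in hand, everything else amounts to bookkeeping with Lemma~\ref{lemma:Poincaré_to_set_connectedness}, the coarea inequality Proposition~\ref{prop:coarea_inequality_minkowski}, and the $\lip$ vs. upper-gradient equivalence Lemma~\ref{lemma:pointwise_differenti}, with the local quasiconvexity and path-connectedness assumptions only being used to ensure that $\Gamma_{x,y}\neq\emptyset$ and that Lemma~\ref{lemma:pointwise_differenti} is available.
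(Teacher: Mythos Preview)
Your proposal is correct and follows essentially the same route as the paper: the easy implications are read off from Lemma~\ref{lemma:Poincaré_to_set_connectedness}, and the only substantive step (iii)~$\Rightarrow$~(i) is bridged through \textup{(BMC)}$_{x,y}$ exactly as in Theorem~\ref{theo:main-intro-p=1}, which the paper simply cites. Your explicit argument for $\width_{x,y}(B_r(\Omega)\setminus\Omega)\ge r$ and the subsequent coarea/truncation step are precisely the content of that theorem's proof of (iv)~$\Rightarrow$~(v)~$\Rightarrow$~(i).
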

The proof is a consequence of Theorem \ref{theo:main-intro-p=1}.
\begin{proof}
    By Lemma \ref{lemma:Poincaré_to_set_connectedness} we know that (i) implies (iv), (iv) implies (v) and (ii), (v) implies (ii) and (v) implies (iii). So it is enough to show that (iii) implies (i). For this it is enough to use Theorem \ref{theo:main-intro-p=1}.
\end{proof}

\section{The proof of Theorem \ref{theo:main-intro-p=1}}
\label{sec:proofofthm1.4}
In this section we prove Theorem \ref{theo:main-intro-p=1}. 
The $1$-set connectedness can be rephrased in the following way. For every subset $A\subseteq \X$ we define its $\Lambda$-separating ratio between $x,y$ as
$$\SR_{x,y}^\Lambda(A) := \frac{\mm_{x,y}^L(A)}{\width^\Lambda_{x,y}(A)}.$$
We also use the conventions that $\SR^\Lambda_{x,y}(A) = 0$ if $\width^\Lambda_{x,y}(A) = +\infty$, that happens if and only if $\Gamma^\Lambda_{x,y} = \emptyset$, and that $\SR^\Lambda_{x,y}(A) = +\infty$ if $\width_{x,y}^\Lambda(A) = 0$. As usual when $\Lambda = \infty$ we simply omit it from the notation.\\
So $(\X,\sfd,\mm)$ is $(C,L,\Lambda)$ $1$-set-connected if and only if 
\begin{equation}
\label{eq:SR_lowerbound}
    \inf_{A \subseteq \X} \SR_{x,y}^\Lambda(A) \geq \frac{1}{C}.
\end{equation}
On the other hand in \cite[Thm.\ 1.2]{CaputoCavallucci2024} the authors showed that the $1$-Poincaré inequality is equivalent to a lower bound on the Minkowski content of all separating sets. For the definition of separating sets we refer to Definition \ref{def:separating_sets}. More precisely it is there proved that a $1$-Poincaré inequality holds if and only if there exists $c>0$ and $L\geq 1$ such that for every $x,y \in \X$ one has
\begin{equation}
\label{eq:MC_lowerbound}
    \inf_{\Omega \in \SS_{\text{top}}(x,y)} (\mm_{x,y}^L)^+(\Omega) \geq c.
\end{equation}
The same statement is true if the condition hold only at two fixed points $x,y$. This is the content of Theorem \ref{theo:main-intro-p=1} that we reproduce here for the reader's convenience.
\begin{reptheorem}{theo:main-intro-p=1}
    Let $(\X,\sfd,\mm)$ be a path connected, locally $\Lambda$-quasiconvex, doubling metric measure space and let $x,y\in \X$. Then the following conditions are quantitatively equivalent:
    \begin{itemize}
        \item[(i)] there exist $C >0$, $L \ge 1$ such that
        \begin{equation}
            |u(x)-u(y)| \le C \int \lip u \,\d \mm_{x,y}^L\qquad \text{for all }u \in {\Lip}(\X);
        \end{equation}
        \item[(ii)] there exist $C >0$, $L \ge 1$ such that
        \begin{equation}
        \label{eq:pointwise_Poincaré_Riesz_UG}
            |u(x)-u(y)| \le C \int g \,\d \mm_{x,y}^L\qquad \text{for all }u \text{ Borel and all } g \in \UG(u);
        \end{equation}
        \item[(iii)] the space $(\X,\sfd,\mm)$ is $(C,L)$ $1$-set connected at $x,y$, i.e. $\inf_{A\subseteq \X} \SR_{x,y}(A) \ge C^{-1}$;  
        \item[(iv)] there exist $c>0$, $L\ge 1$ such that $\inf_{A\subseteq \X,\, A \textup{ closed}} \SR_{x,y}(A) \ge C^{-1}$;
        \item[(v)] the space $(\X,\sfd,\mm)$ satisfies \textup{(BMC)}$_{x,y}$, i.e. $\inf_{\Omega \in \SS_{\textup{top}}(x,y)}(\mm_{x,y}^L)^+(\Omega) \ge c$ for some $L\ge 1$ and $c>0$.
    \end{itemize}
\end{reptheorem}

\begin{proof}
    The equivalence between (i) and (ii) is proved in Theorem \ref{thm:equivalence_pencil_Ap-connectedness}, see also Lemma \ref{lemma:pointwise_differenti}.
    By Lemma \ref{lemma:Poincaré_to_set_connectedness} we know that (ii) implies (iii). It is also clear that (iii) implies (iv). We assume (iv) and we take a separating set $\Omega \in \SS_{\textup{top}}(x,y)$. If $(\mm_{x,y}^L)^+(\Omega) = +\infty$ there is nothing to prove, so we suppose $(\mm_{x,y}^L)^+(\Omega) < +\infty$, which implies $\mm_{x,y}^L(\partial \Omega) = 0$. Consider the set $A_r:= \overline{B}_r(\Omega) \setminus \Int(\Omega)$, which is a closed subset of $\X$. Observe that $\width_{x,y}(A_r) \geq r$ if $r < \min\{\sfd(\partial \Omega, x), \sfd(\partial \Omega, y) \}$. Indeed every curve $\gamma \in \Gamma_{x,y}$ has to go from the inside of $\Omega$ to the outside of $\overline{B}_r(\Omega)$, because of our choice of $r$. Therefore its length inside $A_r$ is at least $r$. We can now compute
     $$(\mm_{x,y}^L)^+(\Omega) = \limi_{r \to 0}\frac{\mm_{x,y}^L(\overline{B}_r(\Omega) \setminus \Omega)}{r} = \limi_{r \to 0}\frac{\mm_{x,y}^L(A_r)}{r} \geq \limi_{r \to 0}\frac{\mm_{x,y}^L(A_r)}{\width_{x,y}(A_r)} \geq \inf_{\substack{A \subseteq \X \\ A \text{ closed}}} \SR_{x,y}(A) \geq c.$$
     The second equality follows by the fact that $\mm_{x,y}^L(\partial \Omega) = 0$, while the last inequality follows by our assumption (iv).
     Since this is true for every $\Omega \in \SS_{\textup{top}}(x,y)$ we get (v).\\
     The proof of (v) implies (i) is exactly the proof of the last implication in \cite[Theorem 6.1]{CaputoCavallucci2024}. For reader's convenience we report it here. Let $u\in \Lip(\X)$ and let $x,y\in \X$. We can assume that $u(x) < u(y)$ otherwise there is nothing to prove. The sets $\Omega_t := \lbrace u \geq t \rbrace$ belong to $\SS_\text{top}(x,y)$ for all $t\in (u(x),u(y))$. We can apply the coarea inequality \eqref{eq:coareain3_AGD} with respect to the measure $\mm_{x,y}^L$ to get
     $$c\, \vert u(x) - u(y) \vert \leq \int_{u(x)}^{u(y)} (\mm_{x,y}^L)^{+}(\lbrace u \geq t \rbrace) \,\d t \leq \int_\X \lip u \,\d\mm_{x,y}^L.$$
     Therefore (i) follows with $C = 1/c$.
\end{proof}

\begin{remark}
\label{rmk:no_connectivity}
We stress that the assumption that the metric space $(\X,\sfd)$ is path connected and locally $\Lambda$-quasiconvex only enters in the proof of (i) implies (ii) and (ii) implies (iii).
\end{remark}

\begin{proof}[Proof of Corollary \ref{cor:Poincaré_p=1}]
    If $(\X,\sfd,\mm)$ satisfies a $1$-Poincaré inequality then $(\X,\sfd)$ is $\Lambda$-quasiconvex (\cite[Proposition 8.3.2]{HKST15}) and \eqref{eq:Riesz_PtPI} holds. So  $(\X,\sfd,\mm)$ is $(C,L)$ $1$-set-connected at every couple of points by Theorem \ref{theo:main-intro-p=1}. Viceversa by the proof of Theorem \ref{theo:main-intro-p=1} and Remark \ref{rmk:no_connectivity} we conclude that if $(\X,\sfd,\mm)$ is $(C,L)$ $1$-set-connected at every couple of points then it satisfies \eqref{eq:Riesz_PtPI_Lipu} at every couple of points. By Proposition \ref{prop:Poincaré_equivalence_Pointwise} this implies that $(\X,\sfd,\mm)$ satisfies a $1$-Poincaré inequality.
\end{proof}

\section{Relation between Minkowski content and separating ratio}
\label{sec:minkowski_vs_separatingratio}
In the previous section, in particular in Theorem \ref{theo:main-intro-p=1}, we proved the equivalence between the pointwise estimate \eqref{eq:Riesz_PtPI} at two fixed points $x,y$ and a positive lower bound on either the separating ratio as expressed in \eqref{eq:SR_lowerbound} or the Minkowski content of topological separating sets as expressed in \eqref{eq:MC_lowerbound}. In particular a key step in the proof of Theorem \ref{theo:main-intro-p=1} was to show that the following inequality is always true:
\begin{equation}
\label{diseq:infima}
    \inf_{\Omega \in \SS_{\textup{top}}(x,y)} (\mm_{x,y}^L)^+(\Omega) \geq \inf_{A \subseteq \X\, \text{closed}} \SR_{x,y}(A).
\end{equation}
The goal of this section is to study in detail these two infima.
In this section we do not need assumptions on the measure $\mm$, except that it has compact support. So we will deal with the following general situation: $(\X,\sfd,\mm)$ is a metric measure space and, for given $x,y\in \X$ and $A\subseteq \X$, we set
$$\SR_{x,y}(A) := \frac{\mm(A)}{\width_{x,y}(A)}$$
with the same conventions as above, i.e. $\SR_{x,y}(A) = 0$ if $\width_{x,y}(A) = +\infty$ and $\SR_{x,y}(A) = +\infty$ if $\width_{x,y}(A) = 0$.
The relevant application is when the measure is $\mm_{x,y}^L$ for some doubling measure $\mm$ and some $L\ge 1$.

\subsection{Path separating sets}
 In order to proceed in this direction it is useful to define another class of separating sets that we call path-separating sets. It is the class
$$\SS_{\textup{path}}(x,y) := \{ A \subseteq \X \,:\, A \cap \gamma \neq \emptyset \text{ for all } \gamma \in \Gamma_{x,y} \}.$$
For completeness we discuss the relation between topological separating sets and closed path-separating sets. This is not strictly necessary for the proof of the equivalence between the minima of the two functionals in Section \ref{subsec:equivalence_minima}. 
\begin{lemma}
\label{lemma:SS_top<path}
    Let $(\X,\sfd)$ be a metric space and let $x,y\in \X$. If $\Omega \in \SS_{\textup{top}}(x,y)$ then $\partial \Omega \in \SS_{\textup{path}}(x,y)$.
\end{lemma}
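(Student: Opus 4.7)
The plan is a straightforward intermediate value / connectedness argument on the interval $[0,1]$. Fix $\Omega \in \SS_{\textup{top}}(x,y)$, so $\Omega$ is closed, and there is $r>0$ with $B_r(x)\subseteq \Int(\Omega)$ and $B_r(y)\subseteq \Omega^c$. Let $\gamma\in\Gamma_{x,y}$ be arbitrary; I want to produce some $t^*\in[0,1]$ with $\gamma(t^*)\in\partial\Omega$.

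First, I would define
\[
t^* := \sup\{\, t\in[0,1] : \gamma(t)\in\Omega\,\}.
\]
This set is nonempty since $\gamma(0)=x\in\Int(\Omega)\subseteq\Omega$, and it is bounded above by $1$. Because $\gamma(1)=y\in B_r(y)\subseteq\Omega^c$ and $\Omega^c$ is open, there is $\delta>0$ with $\gamma((1-\delta,1])\subseteq\Omega^c$, so $t^*\le 1-\delta<1$. Now pick a sequence $t_n\nearrow t^*$ with $\gamma(t_n)\in\Omega$; since $\Omega$ is closed and $\gamma$ is continuous, $\gamma(t^*)\in\Omega$.

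Next, I would show $\gamma(t^*)\notin\Int(\Omega)$. If $\gamma(t^*)$ did lie in $\Int(\Omega)$, continuity of $\gamma$ would give $\varepsilon>0$ with $\gamma((t^*-\varepsilon,t^*+\varepsilon)\cap[0,1])\subseteq\Int(\Omega)\subseteq\Omega$. Since $t^*<1$ we could choose such $\varepsilon$ so that $t^*+\varepsilon/2\le 1$, contradicting the definition of $t^*$ as a supremum. Therefore $\gamma(t^*)\in\Omega\setminus\Int(\Omega)$, and since $\Omega$ is closed this equals $\partial\Omega$. Thus $\gamma\cap\partial\Omega\neq\emptyset$, and as $\gamma\in\Gamma_{x,y}$ was arbitrary, $\partial\Omega\in\SS_{\textup{path}}(x,y)$.

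I do not expect any serious obstacle here; the only subtlety is checking that $t^*<1$ (so that one can genuinely perturb to the right) and using both that $\Omega$ is closed (to get $\gamma(t^*)\in\Omega$) and that its complement contains a ball around $y$ (to get $t^*<1$). These are exactly the two pieces of information encoded in $\Omega\in\SS_{\textup{top}}(x,y)$.
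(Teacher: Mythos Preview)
Your proof is correct and is essentially the same connectedness argument as the paper's, just packaged via the supremum $t^*=\sup\{t:\gamma(t)\in\Omega\}$ rather than the one-line observation that the disjoint open sets $\gamma^{-1}(\Int(\Omega))$ and $\gamma^{-1}(\Omega^c)$ cannot cover the connected interval $[0,1]$. The paper's version is shorter, but yours is equally valid and perhaps more explicit in exhibiting the boundary point.
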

\begin{proof}
    Let $\Omega \in \SS_{\text{top}}(x,y)$ and let $\gamma \in \Gamma_{x,y}$. The sets $\Int(\Omega)$ and $\Omega^c$ are open, so are $[0,1]\cap \gamma^{-1}(\Int(\Omega))$ and $[0,1]\cap \gamma^{-1}(\Omega^c)$. They cannot cover the whole $[0,1]$ by connectedness. So there exists $t\in [0,1]$ such that $\gamma(t) \in \partial \Omega$. This shows that $\partial \Omega \cap \gamma \neq \emptyset$ for all $\gamma \in \Gamma_{x,y}$, i.e. $\partial \Omega \in \SS_{\text{path}}(x,y)$.
\end{proof}


The next is a partial converse of this statement.
\begin{proposition}
\label{prop:SS_path<top}
    Let $(\X,\sfd)$ be a pointwise quasiconvex metric space and let $x,y\in \X$. If $A \in \SS_{\textup{path}}(x,y)$ is closed and satisfies $\min\{\sfd(x,A),\sfd(y,A)\}>0$ then there exists $\Omega \in \SS_{\textup{top}}(x,y)$ such that $\partial \Omega \subseteq A$.
\end{proposition}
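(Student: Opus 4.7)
The plan is to build $\Omega$ as the closure of the rectifiable path-component of $x$ in the open set $\X\setminus A$. Concretely, set
\[
V_x := \{z \in \X\setminus A : \exists\, \gamma \in \Gamma_{x,z}\text{ with }\gamma\cap A=\emptyset\},
\]
where $x\in V_x$ via the constant curve (note $x\notin A$ because $\sfd(x,A)>0$ and $A$ is closed), and define $\Omega:=\overline{V_x}$. The verification then splits into four steps: (a) $V_x$ is open in $\X$; (b) a ball around $x$ lies in $\Int(\Omega)$; (c) a ball around $y$ is disjoint from $\Omega$; (d) $\partial\Omega\subseteq A$.

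For (a), take $z\in V_x$. By pointwise quasigeodesicity there exist $r_z>0$ and $\Lambda_z\geq 1$ such that $\Gamma_{z,w}^{\Lambda_z}\neq\emptyset$ for every $w\in B_{r_z}(z)$. Since $z\notin A$, $\sfd(z,A)>0$; choose $\rho_z<\min\{r_z,\sfd(z,A)/\Lambda_z\}$. For $w\in B_{\rho_z}(z)$, any $\Lambda_z$-quasigeodesic $\eta$ from $z$ to $w$ has length $\leq\Lambda_z\sfd(z,w)<\sfd(z,A)$, hence is contained in the open ball $B_{\sfd(z,A)}(z)\subseteq\X\setminus A$. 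Concatenating a curve in $\Gamma_{x,z}$ avoiding $A$ with $\eta$ shows $w\in V_x$, so $B_{\rho_z}(z)\subseteq V_x$. In particular applying this at $z=x$ yields (b): $B_{\rho_x}(x)\subseteq V_x\subseteq\Int(\Omega)$.

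For (c), let $r_y,\Lambda_y$ come from pointwise quasigeodesicity at $y$ and set $\rho_y<\min\{r_y,\sfd(y,A)/\Lambda_y\}$. Suppose, for contradiction, that some $w\in\overline{V_x}\cap B_{\rho_y}(y)$ exists. Pick $z_n\in V_x$ with $z_n\to w$. For $n$ large one has $z_n\in B_{\rho_y}(y)$ and $z_n\notin A$, so a $\Lambda_y$-quasigeodesic from $y$ to $z_n$ has length less than $\sfd(y,A)$ and is contained in $\X\setminus A$. Concatenating its reverse with a curve in $\Gamma_{x,z_n}$ that avoids $A$ (which exists because $z_n\in V_x$) produces an element of $\Gamma_{x,y}$ disjoint from $A$, contradicting $A\in\SS_{\textup{path}}(x,y)$. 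Hence $B_{\rho_y}(y)\subseteq\Omega^c$.

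Finally for (d), since $V_x$ is open by step (a), $V_x\subseteq\Int(\Omega)$, so $\partial\Omega=\overline{V_x}\setminus\Int(\Omega)\subseteq\overline{V_x}\setminus V_x$. If $z\in\overline{V_x}$ and $z\notin A$, then $\sfd(z,A)>0$ and the same concatenation argument used in (a), applied to a sequence $V_x\ni z_n\to z$, shows $z\in V_x$; contrapositively $\overline{V_x}\setminus V_x\subseteq A$, hence $\partial\Omega\subseteq A$. Combined with (b) and (c) this gives $\Omega\in\SS_{\textup{top}}(x,y)$, as desired. The only real subtlety is calibrating each local radius so that the $\Lambda_z$-quasigeodesic used to propagate membership of $V_x$ has length strictly less than the distance from the base point to $A$; once that is in place all the verifications are direct.
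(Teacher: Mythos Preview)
Your proof is correct and follows essentially the same approach as the paper: define $\Omega$ as the closure of the set of points reachable from $x$ by rectifiable curves avoiding $A$, then verify openness, the ball conditions at $x$ and $y$, and $\partial\Omega\subseteq A$. The only organizational differences are that the paper deduces the ball condition at $y$ \emph{from} $\partial\Omega\subseteq A$ rather than proving it directly, and for the inclusion $\partial\Omega\subseteq A$ the paper argues by taking sequences on both sides of $\partial\tilde\Omega$ and forcing an intersection with $A$, whereas your contrapositive argument (if $z\in\overline{V_x}\setminus A$ then $z\in V_x$) is slightly more direct; just note that in step~(d) the quasigeodesicity should be invoked at the limit point $z$ rather than at $z_n$, so that the radius is fixed as $n\to\infty$.
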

\begin{proof}
    We claim that the set $\Omega$ defined as the closure of
    $$\tilde{\Omega} := \{ z\in \X\,:\, \exists \gamma \in \Gamma_{x,z} \text{ such that } \gamma \cap A = \emptyset \}$$
    satisfies the thesis. First of all $\tilde{\Omega}$ is open. Indeed let $z\in \tilde{\Omega}$, let $\gamma \in \Gamma_{x,z}$ such that $\gamma \cap A = \emptyset$ and let $r = \sfd(z,A) > 0$ since $A$ is closed. Let $\Lambda_z$ and $r_z$ be the constants of the pointwise quasiconvexity at $z$. We claim that $B_{r'}(z) \subseteq \tilde{\Omega}$ if $r' < \min\{r_z, \frac{r}{\Lambda_z} \}$. For this take $w\in B_{r'}(z)$ and a curve $\gamma_w \in \Gamma_{z,w}^{\Lambda_z}$. Observe that $\gamma_w\cap A = \emptyset$ since $\sfd(\gamma_w(t), z) \le \ell(\gamma_w) \leq \Lambda_z r' < r$, so $\gamma_w(t) \notin A$ for every $t$.    
    The concatenation $\gamma \star \gamma_w$ belongs to $\Gamma_{x,w}$ and does not intersect $A$, i.e. $w\in \tilde{\Omega}$.\\
    Secondly $x\in \tilde{\Omega}$ because $\sfd(x,A) > 0$, so $\tilde{\Omega}$ contains a small ball around $x$ since it is open. \\
    Hence the claim follows if we prove that $\partial \Omega=\partial \tilde{\Omega} \subseteq A$. Indeed if this is the case then $y\in \tilde{\Omega}^c$ and $\sfd(y,\partial \Omega) \geq \sfd(y,A) > 0$, showing that there is a small ball around $y$ contained in $\Omega^c$.\\
    It remains to prove that $\partial \tilde{\Omega} \subseteq A$. Let $z$ be a point of $\partial \tilde{\Omega}$. By definition we can find points $z_n \in \tilde{\Omega}$ and $w_n \in \tilde{\Omega}^c$ converging to $z$. Let $r_z,\Lambda_z$ be the constants of the pointwise quasiconvexity at $z$. We can suppose that $n$ is big enough to have $z_n,w_n \in B_{r_z}(z)$. Hence for every $n$ we fix a curve $\gamma_n \in \Gamma_{x,z_n}$ such that $\gamma_n \cap A = \emptyset$, a curve $\eta_n \in \Gamma_{z_n,z}^{\Lambda_z}$ and a curve $\xi_n \in \Gamma_{z,w_n}^{\Lambda_z}$. Finally we consider the concatenation $\beta_n = \gamma_n \star \eta_n \star \xi_n$. By definition $\beta_n \in \Gamma_{x,w_n}$, so $\beta_n \cap A \neq \emptyset$. Since $\gamma_n \cap A = \emptyset$ we can find a point $v_n$ of $A$ on the curve $\eta_n \star \xi_n$. By definition $\sfd(z,v_n) \leq \Lambda_z \max\{\sfd(z,z_n),\sfd(z,w_n)\} \to 0$ as $n$ goes to infinity. This implies that $z\in A$ since $A$ is closed.
\end{proof}

\subsection{The position function}

For us, the main object associated to a set is its position function.
For a given set $A \subseteq \X$, a Lipschitz curve $\gamma \in C([0,1],\X)$ and two parameters $s_1 \le s_2 \in [0,1]$, we define
\begin{equation}
\label{eq:defin_length_set}
    \ell(\gamma \cap A,s_1,s_2):= \ell(\gamma \restr{[s_1,s_2]} \cap A) = \int_{s_1}^{s_2} \chi_A(\gamma(t)) \vert \dot{\gamma}(t)\vert \,\d t.
\end{equation}
The second equality follows from the definition of $\gamma\restr{[s_1,s_2]}$. Also the next useful property follows directly from the definitions:
\begin{equation}
    \ell((\gamma \star \eta) \cap A, s_1, s_2) = \ell(\gamma \cap A, \min\{1,2s_1\}, \min\{1,2s_2\}) + \ell(\eta \cap A, \max\{0,2s_1 - 1\}, \max\{0,2s_2 - 1\}).
\end{equation}
%

\noindent For simplicity, we also define $\ell(\gamma \cap A,s):=\ell(\gamma \cap A,0,s)$. In this case the formula above reduces to
\begin{equation}
\label{eq:length_concatenation_from_0}
    \ell((\gamma \star \eta) \cap A, s) = \ell(\gamma \cap A, 0, \min\{1,2s\}) + \ell(\eta \cap A, 0, \max\{0,2s - 1\}).
\end{equation}
For $A \subseteq \X$ and $\gamma \in \Gamma_{x,y}$ we define the \emph{position function along the path $\gamma$ of the set $A$} as 
\begin{equation}
    \label{eq:posgamma_continuous_definition}
    \pos_{\gamma, A} \colon \X \to [0,\infty],\qquad \pos_{\gamma,A}(z):= \inf_{s \in \gamma^{-1}(z)} \ell(\gamma \cap A,s).
\end{equation}
with the usual convention that ${\rm pos}_{\gamma,A}(z)=\infty$ if $z\notin {\rm Im}(\gamma)$. In other words $\pos_{\gamma,A}(z)$ is the length spent by $\gamma$ inside $A$ before reaching for the first time the point $z$.
We define the \emph{position function with respect to the set $A$} as
\begin{equation}
\pos_A \colon \X \to [0,\infty],\qquad \pos_A(z):=\inf_{\gamma \in \Gamma_{x,y}} \pos_{\gamma,A}(z).
\end{equation}
Therefore the quantity $\pos_A(z)$ denotes the minimal length that a curve between $x$ and $y$ has to spend inside $A$ before passing through $z$.\\
The position function satisfies an interesting property: it allows to naturally fibrate a set into path-separating subsets by looking at its level sets. In order to prove it we introduce the following useful notation. Let $A$ and $\gamma\in \Gamma_{x,y}$, in particular $\gamma$ is Lipschitz. For $\tau \in [0,\ell(\gamma \cap A)]$, we define $(\gamma \cap A)_\tau \in \X$ as
\begin{equation}
    (\gamma \cap A)_\tau=\gamma(\bar{s}),\qquad \text{where }\bar{s}:=\min\{ s:\, \ell(\gamma \cap A, s) \ge \tau\}.
\end{equation}
We remark that, since $\gamma$ is Lipschitz then the function $s\mapsto \ell(\gamma\cap A, s)$ is continuous and the minimum above is well defined.
In the next proposition we will use the properties of the function $[0,\ell(\gamma \cap A)] \ni \tau \mapsto \pos_A((\gamma \cap A)_\tau) \in [0,\infty]$. Figure \ref{fig:pos_along_curve} illustrates the behaviour of this function in a specific example.
\begin{figure}[h]
    \centering
    \includegraphics[scale=1.3]{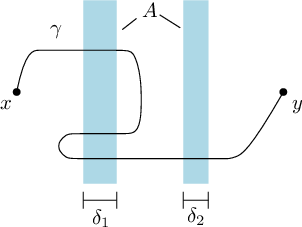}
    \includegraphics[scale=0.7]{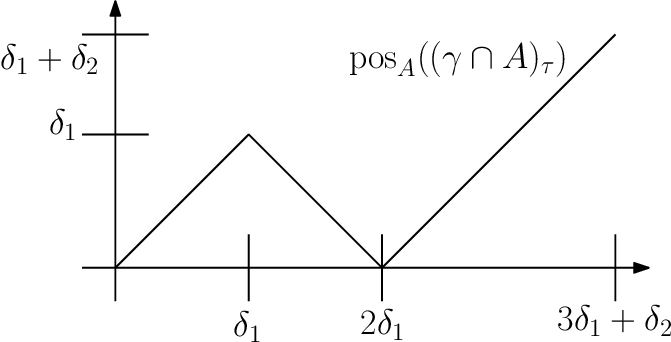}
    \caption{Let us consider $\gamma$ and $A \subseteq \R^2$ as in the picture. In such a case the width of $A$ with respect to $x,y$ is equal to $\delta_1+\delta_2$ and in particular is realized by the straight curve connecting $x$ to $y$. The function ${\rm pos}_A((\gamma \cap A)_\tau)$ computes the position of the point reached when the curve travels a piece of length $\tau$ inside $A$. Its graph is reported in the picture. In this case the level set of the position function $\pos_A$ fibrates the set $A$ by straight vertical lines.}
    \label{fig:pos_along_curve}
\end{figure}

\begin{proposition}
\label{prop:level_set_position_function}
        Let $(\X,\sfd)$ be a metric space and let $x,y\in \X$. Let $A\in \SS_{\textup{path}}(x,y)$ be closed. Then the sets $\{\pos_A = t \} \cap A$ belong to $\SS_{\textup{path}}(x,y)$ for all $t \in [0,\width_{x,y}(A)]$.
\end{proposition}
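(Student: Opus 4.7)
My plan is to fix any $\gamma\in\Gamma_{x,y}$ and any $t\in[0,\width_{x,y}(A)]$ and locate some $s\in[0,1]$ with $\gamma(s)\in A$ and $\pos_A(\gamma(s))=t$. The whole argument is carried out via the scalar function
\[
h\colon[0,1]\to[0,\infty),\qquad h(s):=\pos_A(\gamma(s)),
\]
and the heart of the proof is the quasi-Lipschitz estimate
\begin{equation*}
    |h(s_2)-h(s_1)|\le \ell(\gamma\cap A,s_1,s_2)\qquad \text{for all } 0\le s_1\le s_2\le 1.\qquad(\star)
\end{equation*}

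To establish the upper bound on $h(s_2)$ in $(\star)$, I fix $\varepsilon>0$, pick $\eta_1\in\Gamma_{x,y}$ and $\tilde s_1\in\eta_1^{-1}(\gamma(s_1))$ with $\ell(\eta_1\cap A,\tilde s_1)\le h(s_1)+\varepsilon$, and form the concatenation $\eta_1\restr{[0,\tilde s_1]}\star\gamma\restr{[s_1,1]}$: this is a Lipschitz curve in $\Gamma_{x,y}$ passing through $\gamma(s_2)$ whose $A$-length before reaching $\gamma(s_2)$ is at most $h(s_1)+\varepsilon+\ell(\gamma\cap A,s_1,s_2)$. For the reverse inequality on $h(s_1)$, I take a near-optimal competitor $\eta_2$ for $h(s_2)$ at some $\tilde s_2\in\eta_2^{-1}(\gamma(s_2))$ and build the three-piece concatenation $\eta_2\restr{[0,\tilde s_2]}\star(-\gamma\restr{[s_1,s_2]})\star\gamma\restr{[s_1,1]}$, again in $\Gamma_{x,y}$, which reaches $\gamma(s_1)$ after accruing at most $h(s_2)+\varepsilon+\ell(\gamma\cap A,s_1,s_2)$ of $A$-length; letting $\varepsilon\to 0$ yields $(\star)$.

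The endpoint values of $h$ are then easy to pin down. Using $0\in\eta^{-1}(x)$ gives $\pos_{\eta,A}(x)\le\ell(\eta\cap A,0)=0$ for every competitor, so $h(0)=0$. For $h(1)=\width_{x,y}(A)$, the bound $\pos_A(y)\le\width_{x,y}(A)$ is immediate from $1\in\eta^{-1}(y)$, while the reverse inequality follows from the fact that for any $\eta\in\Gamma_{x,y}$ and any $\bar s\in\eta^{-1}(y)$ the restriction $\eta\restr{[0,\bar s]}$ is again a curve in $\Gamma_{x,y}$ with $A$-length equal to $\ell(\eta\cap A,\bar s)$. Combined with the Lipschitz regularity of $\gamma$, $(\star)$ forces $h$ to be continuous on $[0,1]$, so the intermediate value theorem produces some $s\in[0,1]$ with $h(s)=t$. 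To promote this $s$ to a point of $A$, suppose $\gamma(s)\in A^c$: then $s$ lies in a maximal connected component $I$ of the open set $\gamma^{-1}(A^c)$, on which $\sigma\mapsto\ell(\gamma\cap A,\sigma)$ is constant, so $(\star)$ makes $h$ constant on $I$ and, by continuity, on $\overline I$. The case $I=[0,1]$ is ruled out by $A\in\SS_{\textup{path}}(x,y)$; in every other case $\overline I$ has a boundary point $s'\in\gamma^{-1}(A)$ with $h(s')=h(s)=t$, which is the sought point.

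The only genuine obstacle in this plan is the quasi-Lipschitz estimate $(\star)$: once it is in place, everything else is a routine continuity plus intermediate-value argument. The subtle half is the reverse direction, which crucially exploits that a Lipschitz segment of $\gamma$ may be traversed backwards inside a concatenation without leaving the class $\Gamma_{x,y}$.
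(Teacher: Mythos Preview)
Your argument is correct and follows the same core strategy as the paper: prove a Lipschitz-type estimate for $\pos_A$ along a fixed curve via concatenation constructions, then invoke the intermediate value theorem. The concatenations you build for $(\star)$ are exactly those in the paper's Claim~2.

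The only substantive difference is in parametrization. The paper works with the $A$-length parameter $\tau\in[0,\ell(\gamma\cap A)]$ and the auxiliary points $(\gamma\cap A)_\tau$, which lie in $A$ automatically (for $\tau>0$, by closedness of $A$); hence no final ``move into $A$'' step is needed, but in exchange the paper must prove Claim~1 (existence of $z\in\gamma\cap A$ with $\pos_A(z)\ge\width_{x,y}(A)$) via the last intersection point. You instead parametrize by the curve parameter $s\in[0,1]$ and compute the endpoint value $h(1)=\pos_A(y)=\width_{x,y}(A)$ directly using that $\eta\restr{[0,\bar s]}\in\Gamma_{x,y}$ whenever $\eta(\bar s)=y$; this replaces Claim~1 cleanly, at the cost of the extra argument pushing the IVT point $s$ onto $\gamma^{-1}(A)$ via the maximal component of $\gamma^{-1}(A^c)$. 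Both routes use the closedness of $A$ in exactly one place (the paper to define $s_{\max}$, you to ensure $\gamma^{-1}(A^c)$ is open and its component boundaries land in $\gamma^{-1}(A)$), and the trade-off is a wash.
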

We will see in the proof that the assumption $A\in \SS_{\textup{path}}(x,y)$ is needed in order to guarantee that $\{\pos_A = t \} \cap A$ is not empty for all $t\in [0,\width_{x,y}(A)]$. The assumption that $A$ is closed is made in order to simplify the proof. A similar statement holds for every separating set.
\begin{proof}
    If $\Gamma_{x,y} = \emptyset$ then every subset of $\X$ is a path separating set, thus the statement is trivial. So we can assume that $\Gamma_{x,y} \neq \emptyset$.
    Let $A \in \SS_\text{path}(x,y)$ be closed and $\pos_A \colon \X \to [0,+\infty]$ be its position function.
    \vspace{1mm}
    
    \noindent\textbf{Claim 1:} For every $\gamma \in \Gamma_{x,y}$ there exists $z \in \gamma \cap A$ such that $\pos_A(z) \ge \width_{x,y}(A)$. \\
    We define $s_{\max} :=\max\{ s :\, s \in \gamma^{-1}(A) \}$, which is well defined since $\gamma \cap A \neq \emptyset$ and $A$ is closed. We prove that the point $z:=\gamma(s_{\max})$ verifies the claim. By contradiction we suppose that $\pos_A(z) < \width_{x,y}(A)$. For every $\varepsilon >0$ we consider a path $\gamma_\varepsilon \in \Gamma_{x,y}$ such that $\pos_{\gamma_\varepsilon, A}(z) \le \pos_A(z) + \varepsilon$. By definition $z = \gamma_\varepsilon(s_\varepsilon)$ for some $s_\varepsilon$ such that 
    \begin{equation}
    \label{eq:optimality_choice_teps}
        \ell(\gamma_\varepsilon \cap A, s_\varepsilon) < \pos_{\gamma_\varepsilon, A}(z)+\varepsilon.
    \end{equation}
    We define $\eta_\varepsilon = \gamma_\varepsilon\restr{[0,s_\varepsilon]} \star \gamma\restr{[s_{\max},1]} \in \Gamma_{x,y}$.
    We compute, using \eqref{eq:defin_length_set} and \eqref{eq:length_concatenation_from_0},
    \begin{equation}
        \ell(\eta_\varepsilon \cap A) = \ell(\eta_\varepsilon \cap A, 0, 1) = \ell(\gamma_\varepsilon\restr{[0,s_\varepsilon]} \cap A) + \ell(\gamma \restr{[s_{\max},1]} \cap A) 
        \le \pos_A(z) + 2\varepsilon,
    \end{equation}
    because $\ell(\gamma \restr{[s_{\max},1]} \cap A) = 0$ by definition of $s_{\max}$. Therefore 
    \begin{equation}
    \label{eq:chain_inequality_step_1}
        \width_{x,y}(A) \leq \ell(\eta_\varepsilon \cap A) \le \pos_A(z) + 2\varepsilon.
    \end{equation}
    If $\varepsilon$ is chosen to be sufficiently small, we have that $\pos_A(z) + 2\varepsilon < \width_{x,y}(A)$, thus giving a contradiction.
    \vspace{1mm}
    
    \noindent\textbf{Claim 2:} for every $\gamma \in \Gamma_{x,y}$ we have
    \begin{equation}
    \label{eq:one_lipschitzianity_position}
        \vert \pos_A((\gamma \cap A)_{\tau_1}) -  \pos_A((\gamma \cap A)_{\tau_2}) \vert \leq \vert \tau_1-\tau_2 \vert,\,\text{for }0 \le \tau_1,\tau_2 \le \ell(\gamma \cap A).
    \end{equation}
    We fix $\tau_1,\tau_2 \in [0,\ell(\gamma \cap A)]$ and we assume that $\tau_1 \ge \tau_2$. By definition $(\gamma \cap A)_{\tau_1} = \gamma({\bar{s}_1})$ and $(\gamma \cap A)_{\tau_2} = \gamma({\bar{s}_2})$ for some ${\bar{s}_1} \ge {\bar{s}_2} \in [0,1]$. We split the claim in two parts. We first prove that $\pos_A((\gamma \cap A)_{\tau_1}) -  \pos_A((\gamma \cap A)_{\tau_2}) \leq \tau_1 - \tau_2$.
    For every $\varepsilon$, we consider $\eta_{\varepsilon} \in \Gamma_{x,y}$ and $s_\varepsilon\in [0,1]$ such that $(\gamma \cap A)_{\tau_2} = \eta_{\varepsilon}(s_\varepsilon)$ and $$\ell(\eta_{\varepsilon} \cap A, s_\varepsilon) \leq \pos_A((\gamma \cap A)_{\tau_2}) + \varepsilon.$$
    We define $\gamma_\varepsilon = \eta_\varepsilon \restr{[0,s_\varepsilon]} \star \gamma\restr{[\bar{s}_2,1]} \in \Gamma_{x,y}$. We now define $\bar{t}_1 := \frac{\bar{s}_1 - \bar{s}_2}{1 - \bar{s}_2}$. It satisfies $\gamma\restr{[\bar{s}_2,1]}(\bar{t}_1) = \gamma(\bar{s}_1) = (\gamma \cap A)_{\tau_1}$. Finally if we set $\tilde{t}_1 = \frac{1 + \bar{t}_1}{2}$ we get $\gamma_\varepsilon(\tilde{t}_1) = (\gamma \cap A)_{\tau_1}$.
    Now we compute, using again \eqref{eq:length_concatenation_from_0} and the definition of the points $(\gamma \cap A)_{\tau_1}, (\gamma \cap A)_{\tau_2}$,
    \begin{equation}
        \begin{aligned}
            \pos_{A}((\gamma \cap A)_{\tau_1}) \le \ell(\gamma_\varepsilon \cap A, 0, \tilde{t}_1) &= \ell(\eta_\varepsilon \cap A, 0, s_\varepsilon) + \ell(\gamma \cap A, \bar{s}_2, \bar{s}_1)\\
            &\le \pos_A((\gamma\cap A)_{\tau_2}) + \varepsilon + \tau_1 - \tau_2.
        \end{aligned}
    \end{equation}
    By taking the limit as $\varepsilon \to 0$, we conclude that $\pos_A((\gamma \cap A)_{\tau_1}) -  \pos_A((\gamma \cap A)_{\tau_2}) \leq \tau_1 - \tau_2$.\\
    For the other inequality the procedure is similar and we just sketch it. For every $\varepsilon > 0$ we consider $\eta_\varepsilon \in \Gamma_{x,y}$ and $s_\varepsilon \in [0,1]$ such that $(\gamma \cap A)_{\tau_1} = \eta_\varepsilon(s_\varepsilon)$ and 
    $$\ell(\eta_{\varepsilon} \cap A, s_\varepsilon) \leq \pos_A((\gamma \cap A)_{\tau_1}) + \varepsilon.$$
    This time we define the curve $\gamma_\varepsilon = \eta_\varepsilon \star (-\gamma \restr{[\bar{s}_2, \bar{s}_1]}) \star \gamma\restr{[\bar{s}_2,1]} \in \Gamma_{x,y}$ and arguing as above we arrive to
    \begin{equation}
        \begin{aligned}
            \pos_{A}((\gamma \cap A)_{\tau_2}) \le \pos_A((\gamma\cap A)_{\tau_1}) + \varepsilon + \tau_1 - \tau_2.
        \end{aligned}
    \end{equation}
    By taking $\varepsilon \to 0$ we conclude the proof of the claim.

\vspace{1mm}
    
    \noindent\textbf{Claim 3:} $\{ \pos_A = t\} \in \SS_{\text{path}}(x,y)$ for every $t \in [0,\width_{x,y}(A)]$.\\
    Let us fix $t_0 \in [0,\width_{x,y}(A)]$. We have to prove that $\gamma \cap \{ \pos_A = t_0 \}\neq \emptyset$ for every $\gamma \in \Gamma_{x,y}$. If $t_0=0$ the claim is true because every $\gamma \in \Gamma_{x,y}$ intersects $A$, since $A\in \SS_\text{path}(x,y)$ and $(\gamma \cap A)_0$ satisfies $\pos_A((\gamma\cap A)_0) = 0$. 
    The function
    \begin{equation*}
        [0,\ell(\gamma \cap A)] \ni \tau \mapsto \pos_A((\gamma \cap A)_\tau) \in [0,\infty)
    \end{equation*}
    is $1$-Lipschitz by Claim 2. Moreover, by Claim 1, there exists a point $z \in \gamma \cap A$ such that $\pos_A(z) \ge \width_{x,y}(A)$, in particular there exists $\tau_{\max} \in [0,\ell(\gamma \cap A)]$ such that $\pos_A((\gamma \cap A)_{\tau_{\max}}) \ge \width_{x,y}(A)$. Thus, by mean value theorem, there exists a $\tau\in [0,\tau_{\max}]$ such that $\pos_A((\gamma \cap A)_{\tau}) = t_0$.
\end{proof}
In general we cannot use this decomposition unless we know more properties of the position function of separating sets. The next proposition shows that it is more regular under some connectedness properties of the space. 
\begin{proposition}
\label{prop:position_local_properties}
        Let $(\X,\sfd)$ be a rectifiable path connected metric space and let $x,y\in \X$. Let $A \subseteq \X$. Then 
        \begin{itemize}
            \item[(i)] $\pos_A(z) < +\infty$ for every $z\in \X$;
            \item[(ii)] if $\X$ is pointwise $\Lambda_z$-quasiconvex at $z\in \X$ then $\lip(\pos_A)(z) \leq \Lambda_z$;
            \item[(iii)] if $\X$ is pointwise quasiconvex then $\pos_A$ is continuous and $\lip(\pos_A) = 0$ on $\overline{A}^c$;
            \item[(iv)] if $\X$ is (locally) $\Lambda$-quasiconvex then $\pos_A$ is (locally) $\Lambda$-Lipschitz.
        \end{itemize}
\end{proposition}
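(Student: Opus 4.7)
The plan is to derive all four parts from a single concatenation estimate, in the spirit of Claim 2 in the proof of Proposition \ref{prop:level_set_position_function}. Concretely, I will first establish the following key fact: if $z,w\in \X$ and $\Gamma_{z,w}^{\Lambda'}\neq \emptyset$ for some $\Lambda'\geq 1$, then $|\pos_A(z)-\pos_A(w)|\leq \Lambda'\sfd(z,w)$. To prove this, fix $\varepsilon>0$, select $\gamma\in \Gamma_{x,y}$ and $s\in \gamma^{-1}(z)$ with $\ell(\gamma\cap A,s)\leq \pos_A(z)+\varepsilon$, and pick any $\eta\in \Gamma_{z,w}^{\Lambda'}$. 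The curve
\[
\gamma':=\gamma\restr{[0,s]}\star \eta \star (-\eta) \star \gamma\restr{[s,1]}
\]
belongs to $\Gamma_{x,y}$ and passes through $w$ after its first two segments. At that parameter the length accumulated inside $A$ equals $\ell(\gamma\cap A,s)+\ell(\eta\cap A)$, so
\[
\pos_A(w)\leq \pos_A(z)+\varepsilon+\ell(\eta)\leq \pos_A(z)+\varepsilon+\Lambda'\sfd(z,w).
\]
Sending $\varepsilon\to 0$ and exchanging the roles of $z$ and $w$ (using $-\eta\in \Gamma_{w,z}^{\Lambda'}$) yields the key estimate.

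Once this is in hand, (iv) is immediate: $\Lambda$-quasiconvexity, respectively local $\Lambda$-quasiconvexity, ensures that the hypothesis of the estimate holds for every pair, respectively every pair in a common small ball. For (ii), applying the estimate at $z$ to each $w$ in the neighbourhood provided by pointwise $\Lambda_z$-quasiconvexity gives $\lip(\pos_A)(z)\leq \Lambda_z$ directly from the definition of the local Lipschitz constant. Part (i) is the simplest instance of the same idea: by rectifiable path connectedness there exist $\gamma_1\in \Gamma_{x,z}$ and $\gamma_2\in \Gamma_{z,y}$, and $\gamma_1\star\gamma_2\in \Gamma_{x,y}$ witnesses $\pos_A(z)\leq \ell(\gamma_1)<\infty$, with the boundary cases $z\in\{x,y\}$ giving $\pos_A(z)=0$ trivially.

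For (iii), continuity of $\pos_A$ is immediate from (ii), since pointwise quasiconvexity provides a finite local Lipschitz bound at every point. The only nontrivial refinement is the vanishing of $\lip(\pos_A)$ on $\overline{A}^c$, and the key observation is that when $z\in\overline{A}^c$ the auxiliary curve $\eta$ can be forced to avoid $A$ altogether. Indeed, pick $r>0$ with $\overline{B}_r(z)\cap A=\emptyset$, and restrict to $w$ satisfying $\sfd(z,w)<\min\{r_z,r/\Lambda_z\}$; any $\eta\in \Gamma_{z,w}^{\Lambda_z}$ has length strictly less than $r$ and is therefore contained in $B_r(z)\subseteq \overline{A}^c$, so $\ell(\eta\cap A)=0$. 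Rerunning the concatenation argument yields $|\pos_A(z)-\pos_A(w)|\leq \varepsilon$ for every $\varepsilon>0$; hence $\pos_A$ is locally constant at $z$ and $\lip(\pos_A)(z)=0$.

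There is no single dominant obstacle once the concatenation lemma is formulated correctly; the subtle point is making sure that the auxiliary curve $\eta$ does not contribute to the accumulated length inside $A$ more than $\Lambda'\sfd(z,w)$, and, in the proof of (iii), that $\eta$ can be confined to an open set disjoint from $A$. Both are routine consequences of the pointwise quasiconvexity hypothesis, so the proof is essentially a book-keeping exercise on top of the concatenation estimate above.
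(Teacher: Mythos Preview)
Your proof is correct and follows essentially the same concatenation argument as the paper: both build the curve $\gamma\restr{[0,s]}\star\eta\star(-\eta)\star\gamma\restr{[s,1]}$ to compare $\pos_A$ at nearby points, and both treat the vanishing of $\lip(\pos_A)$ on $\overline{A}^c$ by confining $\eta$ to a ball disjoint from $A$. One small slip: $\pos_A(y)$ is not $0$ in general (it equals $\width_{x,y}(A)$), but this side remark is irrelevant to the finiteness claim in (i).
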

\begin{proof}
    The first assertion is trivial, so we move to the second one. Let us fix $z\in \X$ and $r_z>0$ such that $\Gamma_{z,w}^{\Lambda_z} \neq \emptyset$ for every point $w \in B_{r_z}(z)$. Let us fix $w \in B_{r_z}(z)$ and $\varepsilon > 0$. Let $\eta_\varepsilon \in \Gamma_{x,y}$ and $s_\varepsilon \in [0,1]$ be such that $z=\eta_\varepsilon(s_\varepsilon)$ and $\pos_A(z) \leq \ell(\eta_\varepsilon \cap A, s_\varepsilon) + \varepsilon$. We also take a curve $\gamma_w \in \Gamma_{z,w}^{\Lambda_z}$. We define the curve $\gamma = \eta_\varepsilon\restr{[0,s_\varepsilon]} \star \gamma_w \star (-\gamma_w) \star \eta_\varepsilon\restr{[s_\varepsilon,1]} \in \Gamma_{x,y}$. By the usual application of \eqref{eq:length_concatenation_from_0} we get
    $$\pos_A(w) \leq \ell(\eta_\varepsilon \cap A, s_\varepsilon) + \Lambda_z\sfd(z,w) \leq \pos_A(z) + \Lambda_z\sfd(z,w) + \varepsilon,$$
    and, by the arbitrariness of $\varepsilon$, 
    $$\pos_A(w) \leq \pos_A(z) + \Lambda_z\sfd(z,w).$$
    Similarly we also obtain 
    $$\pos_A(z) \leq \pos_A(w) + \Lambda_z\sfd(z,w).$$
    Therefore we can prove (ii) since
    $$\lip(\pos_A)(z) = \lim_{r\to 0} \sup_{w\in B_r(z)} \frac{\vert\pos_A(w) - \pos_A(z)\vert}{\sfd(z,w)} \leq \Lambda_z.$$
    The same technique shows (iv). The continuity statement in (iii) can be proved exactly as in \cite[Lemma 2.2]{Durand-Cartagena-Jaramillo}. It remains to show that $\lip(\pos_A)(z)=0$ if $z\in \overline{A}^c$, in case $\X$ is pointwise quasiconvex. Let $r_z,\Lambda_z$ be as in the definition of pointwise quasiconvexity at $z$. For every $\varepsilon > 0$ let $\gamma_\varepsilon \in \Gamma_{x,y}$ and $s_\varepsilon$ such that $z = \gamma_\varepsilon(s_\varepsilon)$ and $\ell(\gamma \cap A, s_\varepsilon) \le \pos_A(z) + \varepsilon$. Take $r<\min\{r_z, \frac{\sfd(z,\overline{A})}{\Lambda_z}\}$. For every $w\in B_r(z)$ take a curve $\gamma_{w} \in \Gamma_{z,w}^{\Lambda_z}$. By construction $\gamma_{w}$ avoids $A$ and so, arguing as in the first part of the proof, we deduce that $\pos_A(z) = \pos_A(w)$. Since this is true for every $w\in B_r(z)$ we conclude that $\lip(\pos_A)(z) = 0$.
\end{proof}

\subsection{Equivalence of minima via the position function}
\label{subsec:equivalence_minima}
In this section we use the properties of the position function of a path separating set to show the equivalence of the minima of the separating ratio and the Minkowski content. Observe that this result, especially in the locally geodesic setting, sharpens Theorem \ref{theo:main-intro-p=1}. If we apply it to the measure $\mm_{x,y}^L$, where $\mm$ is doubling, we get Theorem \ref{thm:main_equivalence_minima}.

\begin{theorem}
\label{thm:inf_TOP_PATH}
    Let $(\X,\sfd,\mm)$ be a path connected, locally $\Lambda$-quasiconvex metric measure space and suppose $\supp(\mm)$ is compact. Let $x,y \in \supp(\mm)$. Then
\begin{equation}
    \label{eq:equivalence_of_minima}
    \Lambda^{-1}\inf_{\Omega \in \SS_{\textup{top}}(x,y)} \mm^+(\Omega) \le \inf_{A \subseteq \X\,\textup{closed}} \frac{\mm(A)}{\width_{x,y}(A)}\le \inf_{\Omega \in \SS_{\textup{top}}(x,y)} \mm^+(\Omega).
\end{equation}
In particular, if $(\X,\sfd)$ is path connected and locally geodesic we have
\begin{equation}
    \label{eq:equality_of_minima_geodesic_case}
    \inf_{\Omega \in \SS_{\textup{top}}(x,y)} \mm^+(\Omega) = \inf_{A \subseteq \X\,\textup{closed}} \frac{\mm(A)}{\width_{x,y}(A)}.
\end{equation}
\end{theorem}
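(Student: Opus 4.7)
The plan is to split \eqref{eq:equivalence_of_minima} into two separate arguments, where only the left inequality requires the machinery of the position function. For the right inequality $\inf_A \SR_{x,y}(A) \le \inf_\Omega \mm^+(\Omega)$ I would mimic the $(iv)\Rightarrow(v)$ step of Theorem \ref{theo:main-intro-p=1}: given $\Omega \in \SS_{\textup{top}}(x,y)$, test against closed annular competitors such as $A_r := \{z\,:\, r^2 \le \sfd(z,\Omega) \le r\}$. Any rectifiable curve from $x\in \Int(\Omega)$ to $y \notin \overline{B}_r(\Omega)$ (for $r < \sfd(y,\Omega)$) must pass through $A_r$ with length at least $r - r^2$ by the triangle inequality, while $\mm(A_r) \le \mm(\overline{B}_r(\Omega)\setminus \Omega)$. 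Thus $\SR_{x,y}(A_r) \le (1 - r)^{-1}\, r^{-1}\mm(\overline{B}_r(\Omega) \setminus \Omega)$, and sending $r \to 0$ yields the bound.

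For the left inequality I would fix a closed $A$ with $0 < w := \width_{x,y}(A) < \infty$ (sets with $\width_{x,y} = 0$ contribute only $\SR \in \{0, +\infty\}$ and are irrelevant for the infimum) and build a topological separating set from its position function $\pos_A$. Proposition \ref{prop:position_local_properties} provides that $\pos_A$ is continuous with $\lip \pos_A \le \Lambda$ everywhere and $\lip \pos_A \equiv 0$ on $\overline{A}^c$. For every $t \in (0, w)$, I would verify that $\Omega_t := \{\pos_A \le t\}$ lies in $\SS_{\textup{top}}(x,y)$. Continuity and $\pos_A(x) = 0 < t$ provide a neighborhood of $x$ inside $\Int(\Omega_t)$; on the other side, for any $\gamma \in \Gamma_{x,y}$ and any $s$ with $\gamma(s) = y$, the reparametrized restriction $\gamma\restr{[0,s]}$ belongs to $\Gamma_{x,y}$, so $\ell(\gamma \cap A, s) \ge w$, forcing $\pos_A(y) \ge w > t$ and hence a neighborhood of $y$ contained in $\Omega_t^c$.

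The next step is to bound $\mm^+(\Omega_t)$ and integrate in $t$. Compactness of $\supp(\mm)$ together with the local $\Lambda$-Lipschitz property of $\pos_A$ (covering $\supp(\mm)$ with finitely many balls where the Lipschitz bound holds and invoking a Lebesgue-number argument) gives, for $r$ sufficiently small,
\begin{equation*}
\supp(\mm) \cap (\overline{B}_r(\Omega_t) \setminus \Omega_t) \subseteq \{t < \pos_A \le t + \Lambda r\},
\end{equation*}
and consequently $\mm^+(\Omega_t) \le \Lambda \, \liminf_{r\to 0}\, r^{-1}\, \mm(\{t < \pos_A \le t + r\})$. Integrating over $t \in (0,w)$, applying Fatou and Fubini on the bounded non-decreasing function $t \mapsto \mm(\{\pos_A \le t\})$, and using $\lip \pos_A \le \Lambda \chi_{\overline A}$, one arrives at
\begin{equation*}
\int_0^w \mm^+(\Omega_t)\, dt \le \Lambda \int_\X \lip \pos_A \, d\mm \le \Lambda\, \mm(\overline A) = \Lambda\, \mm(A).
\end{equation*}
This can equivalently be viewed as Proposition \ref{prop:coarea_inequality_minkowski} applied to a bounded Lipschitz extension of $\pos_A\restr{\supp(\mm)}$. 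By the mean value theorem some $t^\ast \in (0, w)$ satisfies $\mm^+(\Omega_{t^\ast}) \le \Lambda\, \mm(A)/w = \Lambda\, \SR_{x,y}(A)$, and since $\Omega_{t^\ast} \in \SS_{\textup{top}}(x,y)$, taking the infimum over $A$ gives the first inequality in \eqref{eq:equivalence_of_minima}. When $\X$ is locally geodesic, the same proof with $\Lambda = 1$ combined with the right inequality produces \eqref{eq:equality_of_minima_geodesic_case}.

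The main technical hurdle is running the coarea-type inequality for $\pos_A$, which is only locally Lipschitz on $\X$, whereas Proposition \ref{prop:coarea_inequality_minkowski} is phrased for bounded Lipschitz functions on $\X$. Compactness of $\supp(\mm)$ and Lemma \ref{lemma:loclip_to_lip_up_to_scale} save the situation by making $\pos_A\restr{\supp(\mm)}$ genuinely Lipschitz, after which either the Lebesgue-number argument above or a McShane extension to a bounded $\Lambda$-Lipschitz function on $\X$ legitimizes the estimate, and the constant $\Lambda$ appears exactly as the Lipschitz constant of $\pos_A$.
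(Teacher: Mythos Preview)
Your overall strategy coincides with the paper's: closed annular competitors for the right inequality, and sublevel sets $\Omega_t=\{\pos_A\le t\}$ together with a coarea-type bound for the left one. The right-inequality argument and the verification that $\Omega_t\in\SS_{\textup{top}}(x,y)$ are fine.

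The coarea step, however, has a real gap. Starting from the inclusion $\supp(\mm)\cap(\overline B_r(\Omega_t)\setminus\Omega_t)\subseteq\{t<\pos_A\le t+\Lambda r\}$ and running Fatou/Fubini only yields $\int_0^w\mm^+(\Omega_t)\,dt\le\Lambda\,\mm(\{0<\pos_A\le w\})$, and this set can strictly contain $A$: take $\X=\R$ with $\mm$ supported on $[0,3]$, $x=0$, $y=3$, $A=[1,2]$; then $\{0<\pos_A\le 1\}=(1,3]$ while $\mm(A)=1$. The crude level-set inclusion discards precisely the information $\lip\pos_A=0$ on $A^c$, so you cannot reach $\Lambda\,\mm(A)$ this way. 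Your displayed chain also has a spurious factor: with $\lip\pos_A\le\Lambda\chi_A$ the quantity $\Lambda\int\lip\pos_A\,d\mm$ is bounded by $\Lambda^2\mm(A)$, not $\Lambda\,\mm(A)$. The McShane-extension route does not repair this either: $\pos_A\restr{\supp(\mm)}$ is Lipschitz by Lemma~\ref{lemma:loclip_to_lip_up_to_scale} but not $\Lambda$-Lipschitz in general, and even if it were, $\lip\widetilde u(z)$ for $z\in\supp(\mm)$ depends on values of $\widetilde u$ off $\supp(\mm)$, so $\lip\widetilde u\le\Lambda\chi_A$ need not hold there. What the paper does---and what your Lebesgue-number argument is actually good for---is to apply Proposition~\ref{prop:coarea_inequality_minkowski} \emph{directly to $\pos_A$}: its proof (via $T_r u$ and reverse Fatou/dominated convergence) goes through for locally Lipschitz $u$ once one has the uniform domination $r^{-1}\big(\pos_A-\inf_{\overline B_r(\cdot)}\pos_A\big)\le\Lambda$ on $\supp(\mm)$ for small $r$. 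This gives $\int_0^w\mm^+(\Omega_t)\,dt\le\int_\X\lip\pos_A\,d\mm$ with no extra constant, and only then does the pointwise bound $\lip\pos_A\le\Lambda\chi_A$ from Proposition~\ref{prop:position_local_properties} produce the sharp $\Lambda\,\mm(A)$.
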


\begin{proof}
    We reproduce here the proof of the right inequality. Let $\Omega \in \SS_{\textup{top}}(x,y)$. We can suppose $\mm^+(\Omega) < +\infty$ otherwise there is nothing to prove. In particular we have $\mm^+(\Omega) < +\infty$, which implies $\mm(\partial \Omega) = 0$. We consider the sets $A_r:= \overline{B}_r(\Omega) \setminus \Int(\Omega)$, which is a closed subset of $\X$. As in the proof of Theorem \ref{theo:main-intro-p=1} we get $\width_{x,y}(A_r) \geq r$ if $r < \min\{\sfd(\partial \Omega, x), \sfd(\partial \Omega, y) \}$. Therefore
     $$\mm^+(\Omega) = \limi_{r \to 0}\frac{\mm(\overline{B}_r(\Omega) \setminus \Omega)}{r} = \limi_{r \to 0}\frac{\mm(A_r)}{r} \geq \limi_{r \to 0}\frac{\mm(A_r)}{\width_{x,y}(A_r)} \geq \inf_{\substack{A \subseteq \X \\ A \text{ closed}}} \SR_{x,y}(A).$$
    In order to show the first inequality we proceed as follows. First of all notice that $\Gamma_{x,y} \neq \emptyset$ because $(\X,\sfd)$ is rectifiable path connected, so $\width_{x,y}(A) < +\infty$ for every $A\subseteq \X$. Now we fix an arbitrary closed subset $A \subseteq \X$. We can suppose $\width_{x,y}(A) > 0$, otherwise $\SR_{x,y}(A) = +\infty$ and there is nothing to prove. In particular $A \in \SS_{\text{path}}(x,y)$. Let $\pos_A$ be the position function associated to $A$. It is locally $\Lambda$-Lipschitz by Proposition \ref{prop:position_local_properties}. Therefore it is Lipschitz when restricted to the compact set $\supp(\mm)$ because of Lemma \ref{lemma:loclip_to_lip_up_to_scale}. By the coarea inequality of Proposition \ref{prop:coarea_inequality_minkowski} we get
    $$\int_0^{\width_{x,y}(A)} \mm^+(\{\pos_A \le t\})\,\d t \leq \int_\X\lip(\pos_A)\,\d \mm.$$
    We notice that, for $t\in (0,\width_{x,y}(A))$, the sets $\{\pos_A \le t\}$ belong to $\SS_{\text{top}}(x,y)$ because $\pos_A$ is Lipschitz on $\supp(\mm)$, $x,y\in \supp(\mm)$ and $\pos_A(x) = 0$, $\pos_A(y) = \width_{x,y}(A)$. Moreover $\lip(\pos_A) = 0$ on $A^c$ since $A$ is closed, by Proposition \ref{prop:position_local_properties}. Hence, again by Proposition \ref{prop:position_local_properties}, we have
    $$\int_0^{\width_{x,y}(A)} \mm^+(\{\pos_A \le t\})\,\d t \leq \int_A\lip(\pos_A)\,\d \mm \le \Lambda \mm(A).$$
    Dividing by $\width_{x,y}(A)$ we conclude that we can find some $t\in (0,\width_{x,y}(A))$ such that
    $$\mm^+(\{\pos_A \le t\} \le \Lambda \frac{\mm(A)}{\width_{x,y}(A)} = \Lambda\cdot \SR_{x,y}(A).$$
    By the arbitrariness of $A$ we finally have
    $$\frac{1}{\Lambda}\inf_{\Omega \in \SS_{\textup{top}}(x,y)} \mm^+(\Omega) \le \inf_{A \subseteq \X\,\textup{closed}} \frac{\mm(A)}{\width_{x,y}(A)}.$$
\end{proof}
The proof of Theorem \ref{thm:inf_TOP_PATH} relies on the coarea inequality for the Minkowski content and a special role is played by the position function. In the next section we will show an analogous statement in case of discrete metric measure spaces, where one can use directly the properties of the discrete analogous of the position function. We believe that the proof in the discrete setting highlights the main geometric intuitions behind the equivalence of the minima of the two functionals. 

\section{The equivalence of the infima on measure graphs}

\label{sec:graph_theory}
Let us consider a graph, i.e.\ a couple $(\V,E)$, where $\V$ is a set, called the \emph{set of vertices} and $E\subseteq (\V \times \V) / \mathfrak{S}_2$ is called the \emph{set of edges}. $\mathfrak{S}_2$ is the permutation group of order 2 and its non-trivial element acts on $\V\times \V$ as $(v,w) \mapsto (w,v)$. We say that two points $v,w$ are adjacent, and we write $v \sim w$, if $(v,w) \in E$. 
We say that $(\V,E)$ is a countable graph if $\V$ is countable.
A path in $\V$ is an element of the set
\begin{equation*}
    \mathcal{P}:=\left\{ \{q_i\}_{i=0}^N:N \in \mathbb{N},\,q_i \in \V,\, (q_i,q_{i+1})\in E \right\}.
\end{equation*}
The set of paths connecting two points $v,w \in \V$ is
\begin{equation*}
    \mathcal{P}_{v,w}:=\left\{ \{q_i\}_{i=0}^N \in \mathcal{P} : q_0 = v, q_N = w \right\}.
\end{equation*}
The concatenation of two paths $\sfc = \{q_i \}_{i=0}^N \in \mathcal{P}_{v,w}$ and $\sfc' = \{ q_i'\}_{i=0}^{N'} \in \mathcal{P}_{w,z}$ is the path 
$$\sfc \star \sfc' = (q_0,\ldots, q_N = q_0', q_1',\ldots, q_{N'}) \in \mathcal{P}_{v,z}.$$
Given $\sfc \in \mathcal{P}$ and a subset $A\subseteq \X$, the set $(\sfc \cap A)$ is naturally ordered by the order of elements of $\sfc$. Given $j\in \N$, we denote by $(\sfc \cap A)_j$ the $j$-th element of $(\sfc \cap A)$ with respect to this order.


\noindent We define the analogous version in the discrete setting of the width.

\begin{definition}[Discrete width and discrete separating sets]
    Let $(\V, E)$ be a graph and let $v,w \in \V$.
    The width with respect to $v,w$ of a subset $A \subseteq \V$ is 
    $$\dwidth_{v,w}(A) := \inf\{ \# (\sfc \cap A) : \sfc \in \mathcal{P}_{v,w} \}$$
    with the convention that $\dwidth_{v,w}(A) = 0$ if $\mathcal{P}_{v,w} = \emptyset$.
    A set $A\subseteq V$ is a separating set between $v$ and $w$ if $\dwidth_{v,w}(A) \ge 1$, i.e. if every path between $v$ and $w$ intersects $A$. The class of separating sets between $v$ and $w$ is denoted by $\dSS(v,w)$. For instance if $\mathcal{P}_{v,w} = \emptyset$ then every subset of $\V$ is separating, in particular $\emptyset \in \mathcal{P}_{v,w}$.
\end{definition}

Given a countable graph $(\V,E)$, we consider a set function $\mu \colon \V \to (0,+\infty)$. We extend $\mu$ to a measure on $2^\V$ in the natural way by summing up over the singletons. A measure graph is a triple $(\V,E,\mu)$ as above. The analogous of the separating ratio is defined as follows.
    
\begin{definition}[Discrete separating ratio]
    Let $(\V, E, \mu)$ be a measure graph and let $v,w \in \V$. The discrete separating ratio of a subset $A \subseteq V$ with respect to $v,w$ is
    $$\dSR_{v,w}(A):=\frac{\mu(A)}{\dwidth_{v,w}(A)}$$ 
    with the convention that $\dSR_{v,w}(A) = 0$ if $\dwidth_{v,w}(A) = +\infty$, i.e. if $\mathcal{P}_{v,w} = \emptyset$, and $\dSR_{v,w}(A) = +\infty$ if $\dwidth_{v,w}(A) = 0$, i.e. if $A \notin \dSS(v,w)$.
\end{definition}

\begin{remark}
    We first remark that our construction and our result do not require a metric on a graph; in this way, we highlight the combinatorial aspect of the theory in this discrete setting.
\end{remark}

\begin{remark}
\label{rmk:graph_approximation}
    A classical example of a graph is given by an approximation of a metric space in terms of nets.
    Let $(\X,\sfd)$ be a metric space. For every $r > 0$, let $\X_{r}$ be a maximal $r$-separated subset of $\X$: this means that the distance of every two points of $\X_r$ is at least $r$ and that $\X_r$ is maximal with this property. It is standard that such a set is $2r$-dense, i.e. every point of $\X$ is at distance at most $2r$ from a point of $\X_r$. Let us construct the graph $(\V_r, E_r)$ in the following way: $\V_r = \X_r$ as a set and $v\sim w$ if and only if $\sfd(v,w)\le 3r$. This discretization procedure of a metric space is standard. \\
    Assume now that $(\X,\sfd, \mm)$ is a doubling metric measure space and let $(\V_r,E_r)$ be as above. Now consider $\mu_r \colon \V_r \to (0,+\infty)$ defined by $\mu_r(v) := \mm(B_{3r}(v))$. This defines a measure graph $(\V_r, E_r, \mu_r)$. For every $A\subseteq \X$ one can define the set $A_r := \{ v \in \V_r \, : \, \sfd(v,A) < 2r \}$. Observe that, as subsets of $\X$, we have $A_r \subseteq B_{2r}(A)$ and $A \subseteq B_{2r}(A_r)$. In particular 
    \begin{equation*}
        \mm(B_r(A)) \le \mm(B_{3r}(A_r)) \le \sum_{v\in A_r} \mm(B_{3r}(v)) = \mu_r(A_r) \le C_D^2 \sum_{v\in A_r} \mm(B_{r}(v)) \le C_D^2\mm(B_{3r}(A)).
    \end{equation*}
    Therefore, if for simplicity we restrict to the sets with $\mm(A) = 0$, the last computation implies that 
    $$\frac{\mu_r(A_r)}{r} \approx \mm^+(A)$$
    as $r$ goes to $0$. Similarly, at least if $(\X,\sfd)$ is $\Lambda$-quasigeodesic, we have
    $$r\cdot \dwidth_{x,y}(A_r) \approx \width_{x,y}(A).$$
    Those are qualitative relations between the quantities defined in the discrete and the continuous settings. We will discuss the construction also later in Remark \ref{rem:gamma_convergence}. 
\end{remark}

In view of the observations above, it is clear that the next result is the discrete analogous of Theorem \ref{thm:inf_TOP_PATH}.

\begin{theorem}
\label{thm:equivalence_minimum_energies_discrete}
    Let $(\V, E, \mu)$ be a measure graph and let $v,w \in \V$. Then:
    $$\inf_{A \subseteq \V} \dSR_{v,w}(A) = \inf_{A \in \dSS(v,w)}\mu(A).$$
\end{theorem}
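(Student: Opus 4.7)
The plan is to mirror the strategy of Theorem~\ref{thm:inf_TOP_PATH} in the discrete setting, where the arguments simplify considerably because everything is combinatorial and integer-valued. The easy inequality $\inf_{A \subseteq \V} \dSR_{v,w}(A) \le \inf_{A \in \dSS(v,w)} \mu(A)$ is immediate: for any separating set $B$ one has $\dwidth_{v,w}(B) \ge 1$, hence $\dSR_{v,w}(B) \le \mu(B)$, and infimising gives the bound. For the reverse inequality I would first dispose of the degenerate cases (when $\mathcal{P}_{v,w} = \emptyset$ the empty set separates and both infima vanish by convention, and when $\dwidth_{v,w}(A) = 0$ the set $A$ contributes $+\infty$ to $\dSR_{v,w}$), and then fix an arbitrary $A \subseteq \V$ with $n := \dwidth_{v,w}(A) \ge 1$. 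The goal becomes producing a separating set $L \subseteq A$ with $\mu(L) \le \mu(A)/n$.

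The key tool is the discrete position function: for $z \in A$ lying on some path in $\mathcal{P}_{v,w}$, set
\begin{equation*}
    \pos_A(z) := \min\{\, j \ge 1 : \exists\, \sfc \in \mathcal{P}_{v,w} \text{ with } (\sfc \cap A)_j = z \,\}.
\end{equation*}
Two combinatorial properties of $\pos_A$ play the role of Claims~1 and~2 in Proposition~\ref{prop:level_set_position_function}. The first is a unit-step property: for every $\sfc \in \mathcal{P}_{v,w}$ and every pair of consecutive elements $z_{j-1}, z_j$ of $\sfc \cap A$, one has $|\pos_A(z_j) - \pos_A(z_{j-1})| \le 1$. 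The inequality $\pos_A(z_j) \le \pos_A(z_{j-1}) + 1$ is obtained by concatenating an optimal path for $z_{j-1}$ with the tail of $\sfc$ from $z_{j-1}$ onwards; since $z_{j-1}$ and $z_j$ are consecutive $A$-vertices on $\sfc$, no $A$-vertex lies strictly between them, so $z_j$ ends up at position $\pos_A(z_{j-1})+1$ on the new path. The opposite inequality uses a round-trip detour: concatenate an optimal path for $z_j$ with the reverse of the $\sfc$-segment from $z_j$ back to $z_{j-1}$ (adding exactly the single $A$-vertex $z_{j-1}$) and then with the rest of $\sfc$. The second is a terminal property: if $z_m$ is the last element of $\sfc \cap A$, then $\pos_A(z_m) \ge n$; otherwise concatenating an optimal path for $z_m$ with the tail of $\sfc$ after $z_m$ (which carries no further $A$-vertices) produces a path in $\mathcal{P}_{v,w}$ with strictly fewer than $n$ intersections with $A$, contradicting the definition of $\dwidth$.

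With these two properties in hand, the proof concludes via a discrete intermediate value theorem. Along any $\sfc \in \mathcal{P}_{v,w}$ with $m := \#(\sfc \cap A) \ge n$, the sequence $\pos_A(z_1), \ldots, \pos_A(z_m)$ consists of positive integers, starts at $\pos_A(z_1) = 1$ (the path $\sfc$ itself realises $z_1$ as a first $A$-vertex), ends at $\pos_A(z_m) \ge n$, and has unit jumps; hence it attains every integer in $\{1, \ldots, n\}$. Consequently each level set $L_k := \{ z \in A : \pos_A(z) = k\}$ meets every path in $\mathcal{P}_{v,w}$, i.e.\ $L_k \in \dSS(v,w)$ for $k = 1, \ldots, n$. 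Since $L_1, \ldots, L_n$ are pairwise disjoint subsets of $A$, $\sum_{k=1}^n \mu(L_k) \le \mu(A)$, and by pigeonhole some $L_{k^*}$ satisfies $\mu(L_{k^*}) \le \mu(A)/n = \dSR_{v,w}(A)$. Taking the infimum over $A$ yields $\inf_{B \in \dSS(v,w)} \mu(B) \le \inf_A \dSR_{v,w}(A)$, the desired inequality.

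The main delicate point, I expect, will be the bookkeeping in the unit-step property: I must verify that the concatenations and reversals used do not accidentally create or destroy $A$-vertices beyond those I intend to track, and that the resulting object is indeed a valid element of $\mathcal{P}_{v,w}$ (which is a walk, so revisiting vertices is allowed, and this is exactly what legitimises the reversal trick). This is the discrete counterpart of the reparametrization arguments in Claim~2 of Proposition~\ref{prop:level_set_position_function}, but the finite indexing reduces its verification to direct counting.
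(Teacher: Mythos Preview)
Your proposal is correct and follows essentially the same route as the paper: define the discrete position function, prove the terminal property (paper's Claim~1) and the unit-step property (paper's Claim~2), deduce via a discrete intermediate value argument that each level set $L_k$ is separating (paper's Claim~3), and conclude by pigeonhole. Two minor cosmetic differences: the paper only proves the one-sided bound $\pos_A(z_{j}) \le \pos_A(z_{j-1}) + 1$, which already suffices for the IVT step, and the paper adds an extra Claim~4 showing $\dwidth_{v,w}(L_k) = 1$ so as to phrase the result as a reduction to width-one sets, whereas you go straight to the infimum over $\dSS(v,w)$, which is slightly more direct.
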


The proof uses the analogous of the position function in the discrete setting. Let $A \subseteq \X$ and $\sfc = \{q_i\}_{i=0}^N \in \mathcal{P}_{v,w}$ be a path. The position function with respect to the set $A$ and the path $\sfc$ is the function ${\rm pos}_{\sfc,A} \colon \V \to \N 
 \cup \{\infty \}$ defined by 
 $$\pos_{\sfc,A}(z) := \min_{i_0\in \N\,:\, z = q_{i_0}} \#(\{q_i\}_{i=0}^{i_0} \cap A).$$ 
If $z \notin \sfc$ then $\pos_{\sfc,A}(z) = +\infty$. The position function with respect of the set $A$ is 
$$\pos_A \colon \V \to \mathbb{N} \cup \{\infty\}, \quad z\mapsto \min_{\sfc \in \mathcal{P}_{v,w}} \pos_{\sfc,A}(z).$$

\noindent For instance $\pos_A(z) = 1$ if there exists $\sfc \in \mathcal{P}_{v,w}$ such that $z$ is the first point of intersection of $\sfc$ with $A$. Figure \ref{fig:position function} gives an intuition about this definition.

\begin{figure}[h!]
    \centering
    \includegraphics[scale=1.8]{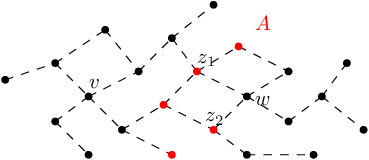}\\
    \vspace{5mm}
    \includegraphics[scale=1.8]{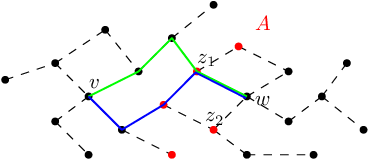}    
    \caption{For the given set $A$ made of red points, the point $z_1$ has position $1$, i.e.\ ${\pos_A}(z_1)=1$, while $\pos_A(z_2)=2$. Indeed $z_1$ has position $2$ with respect to the blue path, but it has position $1$ with respect to the green one. Instead there are no paths for which $z_2$ is the first intersection point with $A$.}
    \label{fig:position function}
\end{figure}

\noindent The analogous of Proposition \ref{prop:level_set_position_function} allows to fibrate a set $A$ by means of level sets of the position function associated to $A$. Figure \ref{fig:fibration} shows such a procedure for the graph and the set $A$ in Figure \ref{fig:position function}. This is is the key step in the proof of Theorem \ref{thm:equivalence_minimum_energies_discrete}. Notice that, unlike the continuous case, we do not need additional properties of the position function related to the connectivity properties of the metric space.

\begin{figure}[h!]
    \centering
    \includegraphics[scale=1.8]{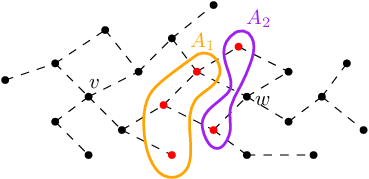}
    \caption{The construction of the sets $A_i$'s as in the proof.}
    \label{fig:fibration}
\end{figure}

\begin{proof}[Proof of Theorem \ref{thm:equivalence_minimum_energies_discrete}]
    If $\mathcal{P}_{v,w} = \emptyset$ then the thesis is true. Indeed for every subset of $A$ we have $\dSR_{v,w}(A) = 0$ by definition. On the other hand $\emptyset \in \dSS(v,w)$, so also the right hand side is zero. We can now suppose $\mathcal{P}_{v,w} \neq \emptyset$.    
    The central step is to show that
    \begin{equation}
        \label{eq:inf_discwidth_1}
        \inf_{A \subseteq \V} \dSR_{v,w}(A) = \inf_{\substack{A \subseteq \V \\ \dwidth_{v,w}(A) = 1}} \dSR_{v,w}(A).
    \end{equation}
    Indeed, if \eqref{eq:inf_discwidth_1} holds, then the thesis follows by the definition of $\dSR_{v,w}(A)$, since 
    \begin{equation*}
      \dSR_{v,w}(A) = \mu(A)\qquad\text{ if }\dwidth_{v,w}(A) = 1. 
    \end{equation*}
    It is clear that the right hand side of \eqref{eq:inf_discwidth_1} is always bigger than or equal to the left hand side, so it is enough to show the other inequality.\\
    Let us take $A\subseteq \V$. If $A\notin \dSS(v,w)$ then $\dSR_{v,w}(A) = +\infty$ and there is nothing to prove. Hence we can suppose that $A\in \dSS(v,w)$. It is enough to prove that there exists $A' \in \dSS(v,w)$ with $\dwidth_{v,w}(A') =1$ such that $\dSR(A) \ge \dSR(A')$.
    We consider the sets
    $$A_i := \lbrace z\in A \text{ s.t. } \pos_A(z) = i \rbrace.$$
    Similarly to what we did in the proof of Proposition \ref{prop:level_set_position_function} we claim that $A_i \in \dSS(v,w)$ and that $\dwidth_{v,w}(A_i)=1$ for $i=1,\ldots,\dwidth_{v,w}(A)$. 
    \vspace{1mm}
    
    \noindent\textbf{Claim 1.} For every $\sfc \in \mathcal{P}_{v,w}$ there exists $z\in \sfc \cap A$ such that $\pos_A(z) \geq \dwidth_{v,w}(A)$. \\
    Let $z$ be the last point of the set $\sfc \cap A$. Suppose $\pos(z) < \dwidth_{v,w}(A)$ and take a path $\sfc_z\in \mathcal{P}_{v,w}$ such that $\pos_{\sfc_z,A}(z) = \pos_A(z)$. Consider the concatenation of $\sfc_z$ up to $z$ and $\sfc$ after $z$. This is a path belonging to $\mathcal{P}_{v,w}$ that intersects $A$ in at most $\pos_A(z) < \dwidth_{v,w}(A)$ points, a contradiction.
    \vspace{1mm}
    
    \noindent\textbf{Claim 2.} For every $\sfc \in \mathcal{P}_{v,w}$ we have $\pos((\sfc \cap A)_{j+1}) \leq \pos((\sfc \cap A)_j) + 1$ for every $j$.\\
    Call $z := (\sfc \cap A)_j$ and $p:= \pos((\sfc \cap A)_{j})$. Let $\sfc_z \in \mathcal{P}_{v,w}$ be such that $z = (\sfc_z \cap A)_p$. Consider the concatenation of $\sfc_z$ up to $z$ and $\sfc$ after $z$. This new curve belongs to $\mathcal{P}_{v,w}$ and $\pos((\sfc \cap A)_{j+1}) \leq p+1$ by construction.
    \vspace{1mm}
    
    \noindent\textbf{Claim 3.} $A_i \in \dSS(v,w)$ for all $i = 1,\ldots,\dwidth_{v,w}(A)$.\\
    Let $\sfc \in \mathcal{P}_{v,w}$. Since $\dwidth_{v,w}(A) \ge 1$ then the set $\sfc \cap A$ is not empty. The first point of intersection $(\sfc \cap A)_1$ clearly satisfies $\pos_A((\sfc \cap A)_1) = 1$. 
    By Claim 1 we can find a point $z\in \sfc$ with $\pos_A(z) \geq \dwidth_{v,w}(A)$. The function $\pos_A$ can increase along consecutive points of $\sfc \cap A$ by at most $1$ by Claim 2. So for every $i = 1,\ldots, \dwidth_{v,w}(A)$ there must be some point of $\sfc \cap A$ with position $i$, i.e. $\sfc \cap A_i \neq \emptyset.$
    \vspace{1mm}
    
    \noindent\textbf{Claim 4.} $\dwidth_{v,w}(A_i) = 1$ for all $i = 1,\ldots,\dwidth_{v,w}(A)$.\\
    Let $\sfc \in \mathcal{P}_{v,w}$ be arbitrary and $i \in \{ 1,\ldots, \dwidth_{v,w}(A)\}$. Call $z$ the last point of intersection of $\sfc \cap A_i$: it exists because of Claim 3. Let $\sfc_z \in \mathcal{P}_{v,w}$ be such that $z = (\sfc_z \cap A)_i$. Consider the concatenation of $\sfc_z$ up to $z$ and $\sfc$ after $z$. This is a path belonging to $\mathcal{P}_{v,w}$ that intersects $A_i$ only in $z$. Indeed in the part of $\sfc$ after $z$ there are no more intersections with $A_i$ by hypothesis. On the other hand it is clear that $\pos((\sfc_z \cap A)_j) \leq j < i$ for all $j<i$. This shows that $\dwidth_{v,w}(A_i)=1$.
    \vspace{1mm}
    
    \noindent The sets $A_i$, $i=1,\ldots,\dwidth_{v,w}(A)$, are disjoint and contained in $A$, so there exists one of them with $\mu(A_i) \leq \frac{\mu(A)}{\dwidth_{v,w}(A)}$. Setting $A' = A_i$ for such value of $i$ we get
    $$\dSR_{v,w}(A') = \mu(A_i) \le \frac{\mu(A)}{\dwidth_{v,w}(A)} = \dSR_{v,w}(A).$$
    As claimed at the beginning this is enough to conclude.
\end{proof}

\begin{remark}
    In the discrete argument the coarea inequality has been replaced by the easier mean estimate at the end of the proof above. Moreover the proof of Theorem \ref{thm:equivalence_minimum_energies_discrete} suggests that the infimum of the discrete separating ratio should be realized by 'slim' subsets. However notice that a discrete separating set with discrete width equal to $1$ is not necessarily `slim'. For instance one can consider the graph $V = \mathbb{Z}^2$ with $[(v,w)] \in E$ if and only if $v-w \in \{ (\pm 1,0), (0,\pm 1) \}$. Let $v:=(-5,0)$ and $w:=(5,0)$. Define $A:= \{ (x,y) \in V:\, |y|\ge |x|\}$. In such a case, $\dwidth_{v,w}(A)=1$ by simply considering the horizontal path, but the set is `thick'. 
\end{remark}

In Proposition \ref{prop:slim} we show that the infimum of the discrete separating ratio can be always computed among slim discrete separating sets, where for us 'slim' means that every point has position $1$. In other words a discrete separating set $A$ is slim if for every $z\in A$ there exists a path $\sfc \in \mathcal{P}_{v,w}$ such that $z = (\sfc \cap A)_1$. These sets resemble the boundaries of separating sets in the continuous case, that should be thought as model of slim sets. Let us first discuss the definition of slim sets. 
\begin{definition}[Slim separating sets]
    Let $(\V,E)$ be a graph, $v,w \in \V$. A subset $A\subseteq \V$ is called a slim separating set if
    \begin{itemize}
        \item[(a)] $A \in \dSS(v,w)$;
        \item[(b)] for every $z\in A$ there exists a path $\sfc \in \mathcal{P}_{v,w}$ such that $z = (\sfc \cap A)_1$.
    \end{itemize}
\end{definition}
Recall that the connected component of a point $v\in \V$ is the maximal subset $C$ of $\V$ satisfying $\mathcal{P}_{v,w} \neq \emptyset$ for every $w\in C$. A graph is connected if it has only one connected component. Observe that if $A \in \dSS(v,w)$ then also its intersection with the connected component containing $v$ and $w$ is a discrete separating set with same width and smaller measure. So it is not restrictive to suppose that a separating set is contained in the connected component of $v$ and $w$.
By definition a slim separating set is contained in the connected component of $v$ and $w$.

\begin{lemma}
\label{lemma:slim_equivalences}
Let $(\V,E)$ be a graph, $v,w \in \V$ and $A \in \dSS(v,w)$. Suppose $\mathcal{P}_{v,w} \neq \emptyset$ and that $A$ is contained in the connected component of $v$ and $w$. Then the following conditions are equivalent:
\begin{itemize}
    \item[(i)] $A$ is slim;
    \item[(ii)] the function $\pos_A$ restricted to $A$ is constantly equal to $1$;
    \item[(iii)] the function $\pos_A$ takes value in $\{0,1,+\infty\}$;
    \item[(iv)] for every point $z$ of the connected component of $v,w$ there exists a path $\sfc \in \mathcal{P}_{v,w}$ such that $z\in \sfc$ and $\#(\sfc \cap A) \in \{0,1\}$.
\end{itemize}
\end{lemma}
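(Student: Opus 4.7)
My plan is to close a cycle (i) $\Leftrightarrow$ (ii) $\Leftrightarrow$ (iii) and to obtain (iv) by extending the same combinatorial argument. Condition (ii) serves as the central restatement of slimness in the language of $\pos_A$, and every other implication will be derived from it.

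The equivalences (i) $\Leftrightarrow$ (ii) and (iii) $\Leftrightarrow$ (ii) will essentially unfold definitions. The identity $\pos_{\sfc,A}(z)=1$ says that the prefix of $\sfc$ up to the first occurrence of $z$ intersects $A$ in a single point, and for $z \in A$ this point must be $z$ itself; so $\pos_A(z)=1$ iff some $\sfc \in \mathcal{P}_{v,w}$ realizes $z=(\sfc \cap A)_1$, which is slimness at $z$. Since $z \in A$ forces $\pos_A(z) \ge 1$ and since $A$ lies in the connected component of $v,w$ (so $\pos_A<+\infty$ on $A$), condition (iii) restricted to $A$ collapses precisely to (ii).

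The main combinatorial step is (ii) $\Rightarrow$ (iii). For $z$ outside the connected component of $v,w$, no $\sfc \in \mathcal{P}_{v,w}$ contains $z$ and thus $\pos_A(z)=+\infty$. For $z$ in the connected component with $z \notin A$, I would pick any $\sfc \in \mathcal{P}_{v,w}$ through $z$ (which exists by hypothesis); either its prefix up to $z$ is disjoint from $A$, giving $\pos_A(z)=0$, or the last $A$-point $z'$ of that prefix satisfies $\pos_A(z')=1$ by (ii), and one can then concatenate the portion of a slim path $\sfc'$ from $v$ up to $z'$ with the portion of $\sfc$ from $z'$ through $z$ to $w$. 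The resulting element of $\mathcal{P}_{v,w}$ passes through $z$ and intersects $A$ on its prefix up to $z$ only at the single point $z'$, so $\pos_A(z)\le 1$.

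The implication (iv) $\Rightarrow$ (i) is immediate: for $z \in A$ the path $\sfc$ provided by (iv) contains $z \in \sfc \cap A$, so $\#(\sfc \cap A) \ge 1$, forcing $\#(\sfc \cap A)=1$ and $\sfc \cap A=\{z\}$, i.e.\ $z=(\sfc \cap A)_1$. The subtle direction is (ii) $\Rightarrow$ (iv), since (ii) only controls prefixes while (iv) demands a global bound on the number of $A$-crossings. My strategy is to iterate the splicing trick from both endpoints: first use (ii) to route from $v$ through a chosen $A$-point $z'$ with a single crossing up to $z$, then apply the analogous slim property read from the $w$-side to continue from $z'$ (or from $z$ itself, if $z \in A$) to $w$ without introducing any further $A$-point. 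The main obstacle I expect is verifying that the $w$-side splicing is legitimate, i.e.\ that the two halves can be arranged to meet at a common vertex so that their concatenation is an admissible element of $\mathcal{P}_{v,w}$ realizing $\#(\sfc \cap A) \in \{0,1\}$; the natural way to handle it is to perform both splicings at the same witness $A$-point produced by (ii), so that the total count of $A$-intersections never exceeds one.
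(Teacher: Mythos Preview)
Your treatment of (i) $\Leftrightarrow$ (ii), of (ii) $\Rightarrow$ (iii), and of (iv) $\Rightarrow$ (i) is correct and essentially matches the paper's argument. You are also right to isolate (ii) $\Rightarrow$ (iv) as the delicate step; the paper dispatches it in a single sentence (``(iv) is just a reformulation of (iii)''), which conceals exactly the difficulty you noticed.

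Your proposed fix for that step, however, has a real gap: you want to ``apply the analogous slim property read from the $w$-side'', but slimness as defined is \emph{not} symmetric in $v$ and $w$. Condition (ii) guarantees that each $z\in A$ is the \emph{first} $A$-point along some path in $\mathcal P_{v,w}$, and nothing in (i)--(iii) forces $z$ to also be the \emph{last} $A$-point along some such path. In fact the implication (ii) $\Rightarrow$ (iv) is false as stated. Take $\V=\{v,a_1,a_2,w\}$ with edges $v\sim a_1$, $v\sim a_2$, $a_1\sim a_2$, $a_1\sim w$, and set $A=\{a_1,a_2\}$. Then $A$ is separating and slim (for instance $a_2=(\sfc\cap A)_1$ with $\sfc=(v,a_2,a_1,w)$), so (i)--(iii) hold; but any path in $\mathcal P_{v,w}$ containing $a_2$ must also contain $a_1$, the unique neighbour of $w$, hence $\#(\sfc\cap A)\ge 2$ and (iv) fails at $z=a_2$. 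So no splicing argument can rescue this direction, and the paper's one-line justification overlooks the same obstruction.
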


In particular condition (iv) says that $A$ is slim if and only if every point of the connected component of $v$ and $w$ lies inside a path joining $v$ to $w$ that intersects $A$ in exactly one point.

\begin{proof}
    Suppose $A$ is slim and let $z\in A$. By definition we get $\pos_A(z) \le 1$. On the other hand it is also clear from the definition that $\pos_A(z) \ge 1$ since $z\in A$. Therefore (ii) holds.\\
    Suppose now that (ii) holds and take $z\in \X$. If $z$ does not belong to the connected component of $v$ and $w$ then $\pos_A(z) = +\infty$ by definition. Otherwise there exists a path $\sfc = \{q_i\}_{i=0}^N \in \mathcal{P}_{v,w}$ containing $z$, say $z = q_{i_0}$. Consider the set $\{q_i\}_{i=0}^{i_0} \cap A$. If it is empty then $\pos_A(z) = 0$. Otherwise let $z'$ be the biggest element of $\{q_i\}_{i=0}^{i_0} \cap A$. By (ii) we can find a path $\sfc_{z'} \in \mathcal{P}_{v,w}$ such that $(\sfc_{z'} \cap A)_1 = z'$. Let us consider the concatenation $\tilde{\sfc}$ of $\sfc_{z'}$ up to $z'$ and $\sfc$ from $z'$. This is a path in $\mathcal{P}_{v,w}$ and, by construction, $\pos_{\tilde{\sfc},A}(z) = 1$, so $\pos_A(z) \le 1$. This shows (iii).\\
    We notice that (iv) is just a reformulation of (iii). It is also clear that (iv) implies (i), completing the equivalences. 
\end{proof}

\begin{proposition}
\label{prop:slim}
    Let $(\V, E, \mu)$ be a measure graph and let $v,w \in \V$. Suppose $\mathcal{P}_{v,w} \neq \emptyset$. Then:
    \begin{equation*}
        \inf_{A\subseteq \V}\dSR_{v,w}(A) = \inf_{A \textup{ slim}}\dSR_{v,w}(A).
    \end{equation*}
\end{proposition}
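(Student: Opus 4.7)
The nontrivial direction is to show that any $A \in \dSS(v,w)$ can be replaced by a slim separating set $A'$ with $\dSR_{v,w}(A') \le \dSR_{v,w}(A)$; outside $\dSS(v,w)$ the ratio is $+\infty$ and nothing needs to be done. My plan is to recycle the fibration of $A$ built in the proof of Theorem \ref{thm:equivalence_minimum_energies_discrete}, namely the sets
$$A_i := \{ z\in A : \pos_A(z) = i \},\qquad i = 1,\ldots,\dwidth_{v,w}(A),$$
and to upgrade the conclusion of that theorem by observing that each $A_i$ is not only a discrete separating set of discrete width $1$, but actually a slim one. Once this is verified, the pigeonhole step from the same proof — the $A_i$'s are pairwise disjoint subsets of $A$, so some index $i$ satisfies $\mu(A_i) \le \mu(A)/\dwidth_{v,w}(A)$ — yields a slim $A' := A_i$ with $\dSR_{v,w}(A') = \mu(A_i) \le \dSR_{v,w}(A)$.

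To establish slimness of $A_i$, I would fix $z \in A_i$ and use $\pos_A(z) = i$ to pick a path $\sfc = \{q_j\}_{j=0}^N \in \mathcal{P}_{v,w}$ and an index $j_0$ with $q_{j_0} = z$ and $\#(\{q_j\}_{j=0}^{j_0} \cap A) = i$, so that $z = (\sfc \cap A)_i$. Claim 2 in the proof of Theorem \ref{thm:equivalence_minimum_energies_discrete} gives $\pos_A((\sfc \cap A)_{k+1}) \le \pos_A((\sfc \cap A)_k) + 1$, and since $(\sfc \cap A)_1 \in A$ forces $\pos_A((\sfc \cap A)_1) = 1$, an immediate induction yields $\pos_A((\sfc \cap A)_k) \le k < i$ for every $k < i$. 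Hence none of the earlier intersections of $\sfc$ with $A$ belongs to $A_i$, and because $A_i \subseteq A$ the path $\sfc$ does not meet $A_i$ before $z$. This is exactly property (b) in the definition of a slim separating set; property (a) is Claim 3 of Theorem \ref{thm:equivalence_minimum_energies_discrete}.

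The only substantive step beyond what is already present in Theorem \ref{thm:equivalence_minimum_energies_discrete} is this slimness verification, and I expect it to be the mildest kind of obstacle: it hinges entirely on the one-step Lipschitz-type bound for $\pos_A$ along paths (Claim 2 of the cited theorem), which is exactly the discrete counterpart of the $1$-Lipschitz property used in Proposition \ref{prop:level_set_position_function}. In the degenerate case $\dwidth_{v,w}(A) = 1$ the pigeonhole reduces to $A' = A_1 \subseteq A$, and the computation $\dSR_{v,w}(A_1) = \mu(A_1) \le \mu(A) = \dSR_{v,w}(A)$ still closes the argument, so no separate treatment is required.
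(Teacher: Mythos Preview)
Your argument is correct; the slimness check for each $A_i$ goes through exactly as you outline, and the pigeonhole step is the same as in Theorem \ref{thm:equivalence_minimum_energies_discrete}. In fact your verification of slimness is essentially the observation that the proof of Claim 4 in Theorem \ref{thm:equivalence_minimum_energies_discrete} already produces, for every $z\in A_i$, a path in $\mathcal{P}_{v,w}$ meeting $A_i$ for the first time at $z$---not merely a path meeting $A_i$ once.

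The paper takes a slightly different route. It first uses Theorem \ref{thm:equivalence_minimum_energies_discrete} to reduce to separating sets $A$ with $\dwidth_{v,w}(A)=1$, and then, for such an $A$, passes to $A':=A_1=\{z:\pos_A(z)=1\}$. Slimness of $A'$ is obtained not through Claim 2 but via the monotonicity $\pos_{A'}\le \pos_A$ (valid because $A'\subseteq A$), which forces $\pos_{A'}\equiv 1$ on $A'$ and allows one to invoke Lemma \ref{lemma:slim_equivalences}. Your one-step reduction is a bit more direct and yields the stronger fact that \emph{every} level set $A_i$ is slim; the paper's two-stage argument only needs the case $i=1$ but uses a cleaner monotonicity trick in place of the induction along Claim 2.
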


\begin{proof}
    It suffices to prove the inequality $\ge$. We recall that for sets $A \in \dSS(v,w)$ with $\dwidth_{v,w}(A)=1$ we have $\dSR(A) = \mu(A)$. Because of the proof of Theorem \ref{thm:equivalence_minimum_energies_discrete} and our assumption $\mathcal{P}_{v,w} \neq \emptyset$, it suffices to prove that for every $A \in \dSS(v,w)$ with $\dwidth_{v,w}(A)=1$, there exists $A'$ slim with $\mu(A') \le \mu(A)$. 
    Let $\pos_A$ be the position function associated to $A$ and take $A' := \{z\in \V \, : \, \pos_A(z) = 1 \}$. Then $A'$ belongs to $\dSS(v,w)$ as we already showed in the proof of Theorem \ref{thm:equivalence_minimum_energies_discrete}. It is also contained in the connected component of $v$ and $w$ since the position function is finite on $A'$. It is also slim since $\pos_{A'}(z) \le \pos_{A}(z) = 1$ for every $z\in A'$, while on the other hand we always have $\pos_{A'}(z) \ge 1$ for $z\in A'$. So $\pos_{A'}$ is constantly equal to $1$ on $A'$ and we can apply Lemma \ref{lemma:slim_equivalences}.
    Since it is a subset of $A$ it is clear that $\mu(A')\le \mu(A)$.
\end{proof}

\begin{remark}
\label{rem:gamma_convergence}
    Let $(\X,\sfd,\mm)$ be a doubling metric measure space. Let $x,y\in \X$ and let $(V_r, E_r, \mu_r)$ be the graph at scale $r>0$ introduced in Remark \ref{rmk:graph_approximation} with the requirement that $x,y\in\V_r$. It would be interesting to study the $\Gamma$-convergence of the functionals on graphs to the functionals in the continuous case. The goal would be to prove the following relations:
    $$\lim_{r\to 0} \inf_{A\subseteq \V_r} \frac{\dSR_{x,y}(A)}{r} \approx \inf_{A\subseteq \X\,\textup{closed}} \SR_{x,y}(A)$$
    and
    $$\lim_{r\to 0} \inf_{A\in \dSS(x,y, \V_r)} \frac{\mu_r(A)}{r} \approx \inf_{\Omega\in \SS_\textup{top}(x,y)} \mm^+(A).$$
    We do not know what are the conditions on $(\X,\sfd,\mm)$ to ensure this kind of estimates and we do not insist in this direction.
\end{remark}

\appendix
\section{Revision of conditions equivalent to the Poincaré inequality}
\label{sec:revision_Poincare}
There are many equivalent characterizations of the Poincaré inequality in terms of other geometric or analytical quantities, all involving curves connecting two points. However there is an asymmetry in these conditions: some of them are expressed in terms of $\Gamma_{x,y}$, while some of them in terms of  $\Gamma_{x,y}^L$ for a suitable $L 
< \infty$. The scope of this appendix is twofold: first we want to show that there is no difference in considering $\Gamma_{x,y}^L$ or $\Gamma_{x,y}$ in all of these conditions and secondly we will see that the equivalence is still valid for the pointwise version of the conditions, without requiring them \emph{for every couple of points}, assuming some regularity of the space. Furthermore we show that, given $p \in [1,\infty)$, the $p$-Poincaré inequality is equivalent to the existence of a $p$-pencil of curves, a result which is new for $p > 1$ (thus generalizing the previous works \cite{DurCarErikBiqueKorteShanmu21} and \cite{FasslerOrponen19}).\\
For the scope of this appendix the pointwise $p$-Poincaré inequality at $x,y\in \X$ is the following condition: there exist $C>0$ and $L \ge 1$ such that for every Borel function $u$ and every $g \in \UG(u)$ it holds
        \begin{equation}
        \label{eq:Riesz_PtPI_xy}
            |u(x)-u(y)|^p\le C \sfd(x,y)^{p-1} \int_\X g^p(z)\,\d \mm_{x,y}^{L}(z).
        \end{equation} 

\subsection{Keith's module characterization}

The first characterization is due to Keith (see \cite{Kei03}). In order to state it we need to recall the notion of $p$-modulus of a family of curves. Let $(\X,\sfd,\mm)$ be a metric measure space and let $p\ge 1$. Let $\Gamma$ be a family of rectifiable curves of $\X$. The set of admissible densities for $\Gamma$ is $${\rm Adm}(\Gamma):=\{ \rho \colon \X \to [0,+\infty) \text{ Borel, such that }\int_\gamma \rho\,\d s \ge 1 \text{ for all } \gamma \in \Gamma\}.$$
The $p$-modulus of the family $\Gamma$ with respect to the measure $\mm$ is
\begin{equation*}
   {\rm Mod}_p(\Gamma,\mm):=\inf \left\{ \int \rho^p\,\d \mm,\,:\, \rho \in {\rm Adm}(\Gamma) \right\}.
\end{equation*}

Keith's characterization reads as follows.

\begin{proposition}
\label{theo:PI_Keith}
    Let $(\X,\sfd,\mm)$ be a doubling metric measure space. Let $x,y \in \X$. Then the following conditions are quantitatively equivalent:
    \begin{itemize}
        \item[(i)] \eqref{eq:Riesz_PtPI_xy} holds;
        \item[(ii)] there exist $c >0$, $L\geq 1$ such that $\Mod_p(\Gamma_{x,y}, \mm_{x,y}^L) \geq c\sfd(x,y)^{1-p}$ for all $x,y\in \X$;
        \item[(iii)] there exist $c >0$, $L\geq 1$ such that $\Mod_p(\Gamma_{x,y}^L, \mm_{x,y}^L) \geq c\sfd(x,y)^{1-p}$ for all $x,y\in \X$.
    \end{itemize}
\end{proposition}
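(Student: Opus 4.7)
The plan is to prove the three implications (iii) $\Rightarrow$ (ii) $\Rightarrow$ (i) $\Rightarrow$ (iii), so the hardest step (i) $\Rightarrow$ (iii) will rely on the pencil characterisation (Theorem \ref{thm:equivalence_pencil_Ap-connectedness}) that we will have established earlier in the appendix. The trivial direction is (iii) $\Rightarrow$ (ii): since $\Gamma_{x,y}^L \subseteq \Gamma_{x,y}$, every density admissible for $\Gamma_{x,y}$ is also admissible for $\Gamma_{x,y}^L$, so $\Mod_p(\Gamma_{x,y}^L,\mm_{x,y}^L)\le \Mod_p(\Gamma_{x,y},\mm_{x,y}^L)$, and (ii) follows from (iii) with the same constants.

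For (ii) $\Rightarrow$ (i), I would fix a Borel function $u$ and an upper gradient $g\in \UG(u)$, assuming without loss of generality that $u(x)\neq u(y)$ (the other case being trivial). Then $\rho:=g/|u(x)-u(y)|$ is admissible for $\Gamma_{x,y}$: for every $\gamma\in \Gamma_{x,y}$, the upper gradient inequality gives $\int_\gamma \rho \,\d s \ge 1$. Plugging $\rho$ into (ii) yields
\begin{equation*}
    c\,\sfd(x,y)^{1-p} \le \Mod_p(\Gamma_{x,y},\mm_{x,y}^L) \le \int \rho^p \,\d \mm_{x,y}^L = \frac{1}{|u(x)-u(y)|^p}\int g^p\,\d\mm_{x,y}^L,
\end{equation*}
which is exactly \eqref{eq:Riesz_PtPI_xy} with $C=c^{-1}$.

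For the remaining implication (i) $\Rightarrow$ (iii), the plan is to invoke the $p$-pencil characterisation: from the pointwise Poincar\'{e} inequality and the doubling assumption, Theorem \ref{thm:equivalence_pencil_Ap-connectedness} will provide $L'\ge 1$, $C>0$ and a probability measure $\alpha\in \mathscr{P}(\Gamma_{x,y}^{L'})$ such that
\begin{equation*}
    \left(\int \int_\gamma f\,\d s\,\d \alpha(\gamma)\right)^p \le C\,\sfd(x,y)^{p-1}\int f^p\,\d \mm_{x,y}^{L'}
\end{equation*}
for every Borel $f\ge 0$. Given any $\rho\in \textup{Adm}(\Gamma_{x,y}^{L'})$, the admissibility $\int_\gamma \rho\,\d s\ge 1$ for all $\gamma\in \Gamma_{x,y}^{L'}$ combined with integration against $\alpha$ gives $1\le \int\int_\gamma \rho\,\d s\,\d\alpha$, and then the pencil inequality with $f=\rho$ yields $\int \rho^p\,\d\mm_{x,y}^{L'}\ge C^{-1}\sfd(x,y)^{1-p}$. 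Taking the infimum over admissible $\rho$ proves (iii) with $L=L'$ and $c=C^{-1}$.

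The main obstacle is hidden in the use of the pencil theorem in the last implication: producing a pencil of quasigeodesics from the pointwise Poincar\'{e} inequality is the genuinely deep step and requires the doubling hypothesis together with the standard maximal function / chaining machinery (\textit{à la} Heinonen--Koskela and Keith); the remaining manipulations reduce to the elementary duality between module and admissible densities illustrated above.
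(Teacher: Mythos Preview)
Your implications (iii) $\Rightarrow$ (ii) and (ii) $\Rightarrow$ (i) are fine and match the paper's argument essentially verbatim. The problem is in your route (i) $\Rightarrow$ (iii) via the pencil theorem.

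First, there is a hypothesis mismatch: Proposition~\ref{theo:PI_Keith} only assumes that $(\X,\sfd,\mm)$ is doubling, whereas Theorem~\ref{thm:equivalence_pencil_Ap-connectedness} requires in addition that $\X$ be path connected and pointwise quasiconvex. These connectivity assumptions are genuinely used in the construction of the pencil (already to make the auxiliary function $u(z)=\inf_{\gamma\in\Gamma_{x,z}}\int_\gamma g$ finite and Borel), so you cannot invoke that theorem here. Second, even if one is willing to accept the extra hypotheses, there is a logical ordering issue: in the paper the proof of Theorem~\ref{thm:equivalence_pencil_Ap-connectedness} closes its cycle of implications by appealing to Proposition~\ref{theo:PI_Keith} (specifically, (v) $\Rightarrow$ (i) there uses the modulus bound from (ii) $\Rightarrow$ (i) here). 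Your proposal would reverse this dependence; it can be untangled, but not as written.

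The paper avoids all of this by proving (ii) $\Rightarrow$ (iii) directly and elementarily. One simply observes that the constant density $\rho = (\Lambda\sfd(x,y))^{-1}$ is admissible for $\Gamma_{x,y}\setminus\Gamma_{x,y}^\Lambda$, which together with $\mm_{x,y}^L(\X)\le 8C_DL\sfd(x,y)$ gives $\Mod_p(\Gamma_{x,y}\setminus\Gamma_{x,y}^\Lambda,\mm_{x,y}^L)\le 8C_DL\Lambda^{-p}\sfd(x,y)^{1-p}$. Choosing $\Lambda$ large enough that this is at most $c/2$ and using subadditivity of modulus yields (iii). This needs only doubling, no connectivity, and no pencil machinery.
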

The equivalence between (i) and (ii) is essentially proved in \cite{Kei03}, we will write it in order to show that the pointwise version still works. The equivalence between (ii) and (iii) is consequence of the next lemma (see also the discussion at the beginning of the proof of \cite[Theorem 3.7]{DurCarErikBiqueKorteShanmu21}).
\begin{lemma}
    \label{prop:Mod_finito_infinito}
    Let $(\X,\sfd,\mm)$ be a $C_D$-doubling metric measure space, let $x,y\in \X$ and $L,\Lambda\geq 1$. Then $$\textup{Mod}_p(\Gamma_{x,y} \setminus \Gamma_{x,y}^\Lambda, \mm_{x,y}^L) \leq \frac{8}{\Lambda^p}C_D L \sfd(x,y)^{1-p}.$$
\end{lemma}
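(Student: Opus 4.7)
The plan is to exhibit a single constant admissible density and read off the bound directly from Lemma~\ref{prop:properties_mxy}. Concretely, I will take
\begin{equation*}
\rho(z) := \frac{1}{\Lambda\, \sfd(x,y)}, \qquad z \in \X,
\end{equation*}
and verify admissibility and the $L^p(\mm_{x,y}^L)$-energy bound separately.

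For admissibility, I need $\int_\gamma \rho\,\d s \ge 1$ for every $\gamma \in \Gamma_{x,y}\setminus \Gamma_{x,y}^\Lambda$. Recall that every $\gamma \in \Gamma_{x,y}^\Lambda$ satisfies $\ell(\gamma) \le \Lambda\,\sfd(x,y)$, and conversely every rectifiable curve with length at most $\Lambda\,\sfd(x,y)$ admits a reparametrization lying in $\Gamma_{x,y}^\Lambda$. Since $\int_\gamma \rho\,\d s$ is invariant under reparametrization, the family $\Gamma_{x,y}\setminus \Gamma_{x,y}^\Lambda$ is, for the purposes of computing the $p$-modulus, identified with the set of rectifiable curves from $x$ to $y$ of length strictly larger than $\Lambda\,\sfd(x,y)$. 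For any such $\gamma$ one has
\begin{equation*}
\int_\gamma \rho\,\d s = \frac{\ell(\gamma)}{\Lambda\,\sfd(x,y)} > 1,
\end{equation*}
so $\rho$ is admissible.

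For the energy estimate, Lemma~\ref{prop:properties_mxy} gives $\mm_{x,y}^L(\X) \le 8\,C_D\, L\, \sfd(x,y)$, whence
\begin{equation*}
\Mod_p(\Gamma_{x,y}\setminus \Gamma_{x,y}^\Lambda, \mm_{x,y}^L) \le \int_\X \rho^p\,\d\mm_{x,y}^L = \frac{\mm_{x,y}^L(\X)}{(\Lambda\,\sfd(x,y))^p} \le \frac{8\,C_D\,L\,\sfd(x,y)}{\Lambda^p\,\sfd(x,y)^p} = \frac{8\,C_D\,L}{\Lambda^p}\,\sfd(x,y)^{1-p},
\end{equation*}
which is exactly the claimed bound. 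There is no substantive obstacle: the only mildly delicate point is the identification of curves modulo reparametrization in the admissibility check, which is standard for the $p$-modulus; once granted, the argument reduces to one line of energy computation applied to the finiteness bound already recorded in Lemma~\ref{prop:properties_mxy}.
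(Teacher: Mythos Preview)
Your proof is correct and follows exactly the same approach as the paper: take the constant density $\rho = 1/(\Lambda\,\sfd(x,y))$, check admissibility via the length bound for curves outside $\Gamma_{x,y}^\Lambda$, and bound the energy using Lemma~\ref{prop:properties_mxy}. The paper is simply terser about the admissibility step.
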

\begin{proof}
    The function $\rho = \frac{1}{\Lambda\sfd(x,y)}$ is admissible for the family $\Gamma_{x,y} \setminus \Gamma_{x,y}^\Lambda$, i.e.\ $\rho \in {\rm Adm}(\Gamma_{x,y} \setminus \Gamma_{x,y}^\Lambda)$. So
    $$\textup{Mod}_p(\Gamma_{x,y} \setminus \Gamma_{x,y}^\Lambda, \mm_{x,y}^L) \leq \int \rho^p \d\mm_{x,y}^L \leq \frac{1}{\Lambda^p} \frac{1}{\sfd(x,y)^p} \mm_{x,y}^L(\X) \leq \frac{8}{\Lambda^p}C_D L \sfd(x,y)^{1-p}.$$
    The last inequality follows from Lemma \ref{prop:properties_mxy}.
\end{proof}

\begin{proof}[Proof of Proposition \ref{theo:PI_Keith}]
    It is known that in every metric measure space the $p$-modulus of the family $\Gamma_{x,y}$ coincides with the $p$-capacity of the sets $\{x\}$ and $\{y\}$ (see \cite[Theorem 7.31]{Hei01}). Namely
    $${\rm Mod}_p(\Gamma_{x,y}, \mm_{x,y}^L) = \inf \left\{ \int \rho^p \,\d \mm_{x,y}^L \, : \, \rho \in \UG(u),\, u\colon \X \to \R, u(x) = 0, u(y) = 1 \right\}.$$
    For every $u\colon\X \to \R$ such that $u(x) = 0$ and $u(y) = 1$ we get, by \eqref{eq:Riesz_PtPI_xy},
    $$\int \rho^p\,\d\mm_{x,y}^L \ge \frac{1}{C}\sfd(x,y)^{1-p}.$$
    This shows that (i) implies (ii). \\
    We now show the converse. Let $u \colon \X \to \R$ and let $\rho \in \UG(u)$. If $u(x) = u(y)$ there is nothing to prove. Otherwise we define the function
    $$\bar{u}(z) = \left\vert \frac{u(z) - u(x)}{u(z) - u(y)}\right\vert$$
    and $\bar{\rho}(z) = \frac{\rho(z)}{\vert u(y) - u(x)\vert}$.
    By triangle inequality of the absolute value we obtain that $\bar{\rho} \in \UG(\bar{u})$. Since $\bar{u}(x) = 0$ and $\bar{u}(y) = 1$ we can use $\bar{\rho}$ in the estimate of the $p$-capacity. Namely we have
    $$c\,\sfd(x,y)^{1-p} \le {\rm Mod}_p(\Gamma_{x,y}, \mm_{x,y}^L) \le \int \bar{\rho}^p \,\d\mm_{x,y}^L = \frac{1}{\vert u(y) - u(x) \vert^p} \int \rho^p\,\d\mm_{x,y}^L,$$
    which is \eqref{eq:Riesz_PtPI_xy}.\\
    Finally, since it is clear that (iii) implies (ii), it is sufficient to prove that (ii) implies (iii). Suppose (ii) holds with constants $c,L$. Fix $\Lambda \geq 1$ such that $\frac{8}{\Lambda^p}C_DL \leq \frac{c}{2}$. Notice that $\Lambda$ depends only on $C_D,L,p$ and $c$. By Lemma \ref{prop:Mod_finito_infinito} we get $$\Mod_p(\Gamma_{x,y}^{\Lambda}, \mm_{x,y}^L) \geq \frac{c}{2} \sfd(x,y)^{1-p}.$$
    Setting $c' = \frac{c}{2}$ and $L' = \max\lbrace \Lambda, L \rbrace$ we get the thesis. Indeed
    $$\Mod_p(\Gamma_{x,y}^{L'}, \mm_{x,y}^{L'}) \geq \Mod_p(\Gamma_{x,y}^{\Lambda}, \mm_{x,y}^L) \geq c' \sfd(x,y)^{1-p}.$$
\end{proof}

\subsection{Pencil and $A_p$-connectedness characterizations}
We now move to other characterizations that are known in the literature with the name of $A_p$-connectedness (see \cite{ErikssonBique2019II}) and $p$-pencil (see for instance \cite{FasslerOrponen19} or \cite{DurCarErikBiqueKorteShanmu21} for the case $p=1$). We recall that given a complete and separable metric space $(\Y,\sfd_\Y)$ we denote by $\mathscr{P}(\Y)$ the set of Borel probability measures on $\Y$. We will always consider the topology induced by the weak convergence on $\mathscr{P}(\Y)$. We recall that a sequence of probability measures $\alpha_n \in \mathscr{P}(\Y)$ converges weakly to a probability measure $\alpha \in \mathscr{P}(\Y)$ if $\int u \,\d\alpha_n \to \int u \,\d\alpha$ for every bounded, continuous function $u\colon \Y \to \R$. This is the sequential convergence induced by a topology on $\mathscr{P}(\Y)$ that makes it a Polish space.
Moreover $\mathscr{P}(\Y)$ is compact if $\Y$ is. We recall also that every family of continuous curves with values in a metric space $\X$ is endowed with the supremum distance. In particular if $(\X,\sfd)$ is a complete metric space and $x,y\in \X$ are two points then the sets $\Gamma_{x,y}^L$ are compact for every $L\ge 1$ and $\Gamma_{x,y}$ is closed. Finally we denote by $\Lip_1(\X)$ the set of $1$-Lipschitz maps $u\colon \X \to \R$.

\begin{theorem}
    \label{thm:equivalence_pencil_Ap-connectedness}
    Let $(\X,\sfd,\mm)$ be a path connected, pointwise quasiconvex, doubling metric measure space. Let $x,y \in \X$. Then the following conditions are quantitatively equivalent.
    \begin{itemize}
        \item[(i)] \eqref{eq:Riesz_PtPI_xy} holds;
        \item[(ii)] there exist $C>0$ and $L\geq 1$ such that for every $g \geq 0$ Borel there exists $\gamma \in \Gamma_{x,y}^L$ such that 
        $$\left(\int_\gamma g \d s\right)^p \leq C\sfd(x,y)^{p-1}\int g^p \d\mm_{x,y}^L;$$
        \item[(iii)] there exist $C>0$ and $L\geq 1$ such that for every $g \ge 0$ bounded, $g \in \Lip_1(\X)$, there exists $\gamma \in \Gamma_{x,y}$ such that 
        $$\left(\int_\gamma g \d s\right)^p \leq C\sfd(x,y)^{p-1}\int g^p \d\mm_{x,y}^L.$$
        \item[(iv)] there exist $C >0$, $L\geq 1$ and $\alpha \in \mathscr{P}(\Gamma_{x,y})$ such that
        $$\left(\int \int_\gamma g \d\alpha\right)^p \leq C\sfd(x,y)^{p-1} \int g^p \d \mm_{x,y}^L$$
        for all $g \in \Lip_1(\X)$, $g\ge 0$ and bounded;
        \item[(v)] there exist $C >0$, $L\geq 1$ and $\alpha \in \mathscr{P}(\Gamma_{x,y}^L)$ such that
        $$\left(\int \int_\gamma g \d\alpha\right)^p \leq C\sfd(x,y)^{p-1} \int g^p \d \mm_{x,y}^L$$
        for all $g \geq 0$ Borel.
    \end{itemize}

    If moreover $(\X,\sfd)$ is locally $\Lambda$-quasiconvex then the conditions above are also equivalent to:
    \begin{itemize}
        \item[(vi)] there exist $C>0,L\ge 1$ such that \begin{equation}
        \label{eq:Riesz_PtPI_loclip}
            |u(x)-u(y)|^p\le C \sfd(x,y)^{p-1} \int_\X \left(\lip u(z)\right)^p\,\d \mm_{x,y}^{L}(z)
        \end{equation}
        for every $u\in \Lip(\X)$.
    \end{itemize}
\end{theorem}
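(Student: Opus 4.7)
The strategy is a cyclic argument combining easy selection arguments with the hard construction of the pencil measure: $(i) \Leftrightarrow (v)$ is the core equivalence, with the remaining items threaded in via straightforward implications, and $(i) \Leftrightarrow (vi)$ follows from Lemma \ref{lemma:pointwise_differenti} under the extra local $\Lambda$-quasiconvexity assumption.

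The routine steps are as follows. For $(ii) \Rightarrow (i)$, I apply (ii) to an upper gradient $g \in \UG(u)$, combine with the upper gradient inequality $|u(x)-u(y)| \le \int_\gamma g\,\d s$ along the selected $\gamma \in \Gamma_{x,y}^L$, and raise to the $p$-th power. The implications $(v) \Rightarrow (ii)$, $(v) \Rightarrow (iv)$, and $(iv) \Rightarrow (iii)$ are each realized by a Markov-type selection from the pencil measure: since $\gamma \mapsto \int_\gamma g\,\d s$ cannot exceed its $\alpha$-mean everywhere, some $\gamma \in \supp(\alpha)$ witnesses an integral no larger than the mean; raising to the $p$-th power and observing that $\supp(\alpha)$ carries the appropriate length control (either $\subseteq \Gamma_{x,y}^L$ or $\subseteq \Gamma_{x,y}$) gives the selected curve. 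The embedding $\mathscr{P}(\Gamma_{x,y}^L) \hookrightarrow \mathscr{P}(\Gamma_{x,y})$ together with restriction of the test class handles $(v) \Rightarrow (iv)$.

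The main obstacle is $(i) \Rightarrow (v)$. I reduce (i) via Proposition \ref{theo:PI_Keith} to the modulus lower bound $\Mod_p(\Gamma_{x,y}^L, \mm_{x,y}^L) \ge c\, \sfd(x,y)^{1-p}$. Doubling implies properness of $(\X,\sfd)$, hence $\Gamma_{x,y}^L$ is compact by Ascoli-Arzel\`a and $\mathscr{P}(\Gamma_{x,y}^L)$ is weakly compact. I then apply Sion's minimax theorem to the bilinear functional $(\alpha, \rho) \mapsto \int\int_\gamma \rho\,\d s\,\d\alpha(\gamma)$, where $\rho$ runs over non-negative admissible densities for $\Gamma_{x,y}^L$ normalized by $\int \rho^p\,\d \mm_{x,y}^L \le 1$. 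The modulus bound controls $\sup_\rho \inf_\alpha$ from below; minimax exchanges sup and inf to produce the pencil measure $\alpha$. For general $p \ge 1$ the verification of the convex and compactness hypotheses of the minimax theorem is the novel point, adapting and extending the Hahn-Banach duality of \cite[Theorem 3.7]{DurCarErikBiqueKorteShanmu21} from $p = 1$; this is the technically hardest part of the proof.

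Finally, for $(iii) \Rightarrow (ii)$, I exploit that (iii) is scale invariant in $g$, so it extends with the same constant to all Lipschitz bounded $g \ge 0$. Given a bounded Borel $g \ge 0$ and $\delta > 0$, I approximate $g + \delta$ in $L^p(\mm_{x,y}^L)$ by Lipschitz bounded $g_n \ge \delta/2$ (via Vitali-Carath\'eodory regularization combined with Moreau-Yosida), apply the extended (iii) to produce $\gamma_n \in \Gamma_{x,y}$, and observe that the lower bound $g_n \ge \delta/2$ forces $\ell(\gamma_n)$ to be uniformly bounded, so $\gamma_n \in \Gamma_{x,y}^\Lambda$ for some $\Lambda$. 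Compactness yields a subsequential limit $\gamma \in \Gamma_{x,y}^\Lambda$; a lower semicontinuity argument for curve integrals along uniformly convergent curves, followed by $\delta \to 0$ to remove the regularization, closes the implication. The most delicate part of this last step is justifying the limit passage of the curve integrals, which is where the outer-regular approximation of $g$ from above is used.
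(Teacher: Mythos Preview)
Your overall architecture differs from the paper's. The paper runs the cycle $(i)\Rightarrow(iii)\Rightarrow(v)\Rightarrow(i)$, with the remaining items hanging off $(v)$; the hard step is $(iii)\Rightarrow(v)$, done by Sion's minimax on the functional $F_\Lambda^n(\alpha,g)=C^{1/p}\sfd(x,y)^{(p-1)/p}(\int g^p\,\d\mm_{x,y}^L)^{1/p}-\int\!\!\int_\gamma g\,\d\alpha$ with $g$ ranging only over $\Lip_1(\X,[0,n])$, followed by the test $g\equiv 1$ to force $\alpha_\varepsilon^n(\Gamma_{x,y}^{\Lambda_0})\ge\tfrac12$ with a fixed $\Lambda_0$, and finally an approximation (via \cite[Cor.~4.2.3]{HKST15}) to pass from Lipschitz to Borel $g$. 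You instead try $(i)\Rightarrow(v)$ directly through the modulus and close $(iii)$ back via $(iii)\Rightarrow(ii)$. Both steps, as written, have problems.

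\textbf{The minimax step $(i)\Rightarrow(v)$.} Your description is muddled: the densities you feed in should not be ``admissible'' (that constraint is vacuous in this direction), and the modulus lower bound gives an \emph{upper} bound on $\sup_{\int\rho^p\le 1}\inf_\gamma\int_\gamma\rho=\Mod_p^{-1/p}$, not a lower bound. More importantly, Sion's theorem needs $\alpha\mapsto\int\!\!\int_\gamma\rho\,\d\alpha$ to be lower semicontinuous on the compact set $\mathscr{P}(\Gamma_{x,y}^L)$, which requires $\gamma\mapsto\int_\gamma\rho$ to be l.s.c.; this fails for general Borel $\rho$. One must first restrict to continuous (e.g.\ Lipschitz bounded) $\rho$ and only afterwards approximate---which is exactly why the paper routes through $(iii)$ rather than going straight from the modulus.

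\textbf{The step $(iii)\Rightarrow(ii)$ has a genuine gap.} From $g_n\ge\delta/2$ you get $\ell(\gamma_n)\le\tfrac{2}{\delta}\bigl(C\sfd(x,y)^{p-1}\!\int g_n^p\,\d\mm_{x,y}^L\bigr)^{1/p}$, a bound that depends on both $\delta$ and $\Vert g\Vert_\infty$. As $\delta\to 0$ the bound blows up, so your limit curve lies only in $\Gamma_{x,y}$, not in $\Gamma_{x,y}^L$ for an $L$ that is uniform over all $g$; condition $(ii)$ is therefore not reached. The paper never attempts $(iii)\Rightarrow(ii)$ directly; the uniform length control is instead extracted \emph{inside} the minimax argument $(iii)\Rightarrow(v)$ by testing with $g\equiv 1$, which pins down $\Lambda_0=4(8CC_DL)^{1/p}$ independently of the test function.
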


Some connectivity assumption on $\X$ is necessary since the nice behaviour of every functions at just $x$ and $y$ as expressed by \eqref{eq:Riesz_PtPI_xy} is not enough to guarantee the existence of many curves in the space.\\
Condition (ii) is very similar to the $A_p$-connectedness condition of \cite{ErikssonBique2019II}. Of course we just ask it for a fixed couple of points. Condition (iii) is a weakening of (ii) since it requires the same condition for less functions and it allows to use more curves.\\
Conditions (iv) and (v) are closely related, at least in case $p=1$, to the so called pencil of curves. The formulation and the proof in case $p>1$ is new. The difference between these two conditions is similar to the difference between (ii) and (iii). Finally observe that in (ii) and (iii) it is required that for every function $g$ in a certain class there exists a curve, depending on the function $g$, for which the estimate holds. In conditions (iv) and (v) we ask for a class of curves on which the same condition holds \emph{in average for every function $g$} in the class. This step is made, as in \cite{DurCarErikBiqueKorteShanmu21}, with the help of the following well known min-max theorem.
\begin{proposition}[{\cite[Thm.\ 9.4.2]{Rudin80} (original proof in \cite{Sion58})}]
\label{prop:convex_optimization}
Let $\X$ be a vector space and $\Y$ be a topological vector space. Let $G \subseteq \X$ and $K \subseteq \Y$ be convex subsets, with $K$ compact. Let $F \colon G \times K \to \mathbb{R}$ be such that
\begin{itemize}
    \item[a)] $F(\cdot,y)$ is convex on $G$ for every $y \in K$;
    \item[b)] $F(x,\cdot)$ is concave and upper semicontinuous in $K$ for every $x \in G$.
\end{itemize}
Then
\begin{equation}
    \max_{y \in K} \inf_{x \in G} F(x,y) =  \inf_{x \in G} \max_{y \in K} F(x,y).
\end{equation}
\end{proposition}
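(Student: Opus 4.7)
My plan is to prove the nontrivial direction $\max_K \inf_G F \ge \inf_G \max_K F$ via the finite intersection property in the compact space $K$; the reverse inequality is immediate from $F(x, y_0) \ge \inf_{x'\in G} F(x', y_0)$ for every $(x, y_0) \in G \times K$, together with the observation that $\max_K F(x, \cdot)$ is attained thanks to upper semicontinuity of $F(x, \cdot)$ on the compact set $K$. The same USC property, applied to $y \mapsto \inf_{x \in G} F(x, y)$ (itself upper semicontinuous as the pointwise infimum of USC functions), also justifies writing $\max_K$ rather than $\sup_K$ on the left-hand side.

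Fix any real $c < v := \inf_G \max_K F$ and define, for each $x \in G$, the super-level set
\[
A_x := \{y \in K : F(x, y) \ge c\}.
\]
By upper semicontinuity and concavity of $F(x, \cdot)$, each $A_x$ is a closed, convex subset of $K$; since $\max_K F(x, \cdot) \ge v > c$, each $A_x$ is nonempty. If I can show $\bigcap_{x \in G} A_x \ne \emptyset$, any common point $y^*$ satisfies $F(x, y^*) \ge c$ for every $x \in G$, whence $\max_K \inf_G F \ge c$, and letting $c \uparrow v$ closes the argument. By compactness of $K$, $\bigcap_x A_x \ne \emptyset$ follows from the finite intersection property, and by a standard induction on $n$ this reduces to the pairwise case $A_{x_1} \cap A_{x_2} \ne \emptyset$ for all $x_1, x_2 \in G$.

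The pairwise case is the main obstacle and the step where both hypotheses on $F$ are genuinely needed. I would argue by contradiction: assuming $A_{x_1} \cap A_{x_2} = \emptyset$, I would parametrize the segment $x_s := (1-s) x_1 + s x_2 \in G$ and introduce
\[
T_i := \bigl\{s \in [0,1] : A_{x_s} \cap A_{x_i} \ne \emptyset\bigr\}, \qquad i = 1, 2.
\]
The plan is to contradict the connectedness of $[0,1]$ by showing that $T_1, T_2$ are nonempty ($0 \in T_1$, $1 \in T_2$), disjoint (by the contradiction hypothesis), closed (by upper semicontinuity combined with compactness of $K$), and that their union is $[0,1]$. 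The union property is the delicate one: for each fixed $s$, pick witnesses $y_s \in A_{x_s}$ and $y_i \in A_{x_i}$, restrict attention to the plane they span, and combine the concavity of $F(x_s, \cdot)$ in $y$ (which makes $A_{x_s}$ convex and ensures the segment from $y_s$ toward any $y_i$ stays inside $A_{x_s}$ as long as $F$ is above $c$) with the convexity of $F(\cdot, y)$ in $x$ (which forces, along the segment $x_1 \mbox{--} x_s \mbox{--} x_2$, a monotone-type behavior of $F(\cdot, y)$) to place a suitable convex combination of $y_s, y_1, y_2$ inside $A_{x_s} \cap A_{x_i}$ for one of the indices $i \in \{1,2\}$. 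This two-dimensional reduction is the classical core of Sion's proof \cite{Sion58}, which I would reproduce in detail; the inductive step for $n > 2$ of the finite intersection statement is then routine.
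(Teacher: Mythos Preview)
The paper does not give a proof of this proposition at all: it is stated with a citation to \cite[Thm.\ 9.4.2]{Rudin80} and \cite{Sion58} and then applied as a black box in the proof of Theorem \ref{thm:equivalence_pencil_Ap-connectedness}. So there is nothing to compare your attempt against on the paper's side.

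As for your sketch itself, it follows the classical Sion route, which is appropriate. Two points of caution if you intend to flesh it out. First, the sentence ``by a standard induction on $n$ this reduces to the pairwise case'' hides the real content of the inductive step: one does not get $\bigcap_{i=1}^n A_{x_i}\neq\emptyset$ from pairwise nonemptiness alone (there is no Helly-type theorem in a general topological vector space). The correct induction replaces $K$ by the compact convex set $K':=A_{x_n}$ and reapplies the two-point argument inside $K'$; for this one must check that the sets $A_{x_i}\cap K'$ are still nonempty, which is exactly the pairwise statement, and then iterate. Second, the closedness of your sets $T_i$ is not automatic from upper semicontinuity of $F(x,\cdot)$ alone, since you are varying both coordinates; you need to use that $s\mapsto F(x_s,y)$ is convex on $[0,1]$ (hence continuous on the open interval and one-sidedly controlled at the endpoints) together with compactness of $K$ to extract a limit point lying in the intersection. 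These are the standard technicalities in Sion's proof, but your write-up glosses over them.
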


\begin{proof}[Proof of Theorem \ref{thm:equivalence_pencil_Ap-connectedness}]
    We start assuming (i) and we show (iii). We tailor the argument in \cite[Thm.\ 1.5]{ErikssonBique2019II} to our situation. First of all we recall that $\X$ is rectifiable path connected by Lemma \ref{lemma:rectifiable_path}. 
    For every $g \in \Lip_1(\X)$, $g \ge 0$ bounded, we define the map $u(z):= \inf_{\gamma \in \Gamma_{x,z}}\int_{\gamma} g\,\d s$. Observe that $u(z) < +\infty$ for every $z\in \X$ since $\X$ is rectifiable path connected. We claim that $u$ is continuous, hence Borel. Indeed let $z_n \in \X$ be points converging to $z$. Let $\varepsilon > 0$ and let $\gamma_n \in \Gamma_{x,z_n}$ be a curve such that $\int_{\gamma_n} g\,\d s \le u(z_n) + \varepsilon$. Let also $\Lambda_z,r_z$ be the constants of pointwise quasiconvexity at $z$. Since $\sfd(z_n,z) < r_z$ for $n$ big enough, then we can find curves $\eta_n \in \Gamma_{z_n,z}^{\Lambda_n}$ for every such $n$. The curve $\gamma_n \star \eta_n\in \Gamma_{x,z}$ satisfies
    $$u(z) \le \int_{\gamma_n \star \eta_n} g \,\d s \le u(z_n) + \varepsilon + \Lambda_z \sfd(z_n,z) \Vert g \Vert_\infty.$$
    For $n \to +\infty$ and $\varepsilon \to 0$ we get $u(z) \le \limi_{n\to +\infty} u(z_n)$. Arguing similarly we get also the other inequality, namely $u(z) \ge \lims_{n\to +\infty} u(z_n)$. Thus $u$ is Borel and $g$ is clearly an upper gradient of $u$. By (i) and the definition of $u$ we find $\gamma \in \Gamma_{x,y}$ such that
    $$\left(\int_\gamma g \d s\right)^p \leq C\sfd(x,y)^{p-1} \int g^p(z)\d\mm_{x,y}^L.$$
    The next step is to prove that (iii) implies (v). We fix $n \in \N$ and we observe that $\Lip_1(\X, [0,n])$ is a convex subset of the vector space $(C^0(\X,\R), \Vert \cdot \Vert_\infty)$. Moreover $\mathscr{P}(\Gamma_{x,y}^\Lambda)$ is a compact, convex subset of the vector space of all measures of $\Gamma_{x,y}^\Lambda$ endowed with the weak convergence.
    For every $\Lambda \geq 1$ we consider the functional:
    $$F_\Lambda^n\colon \mathcal{P}(\Gamma_{x,y}^\Lambda) \times \Lip_1\left(\X, \left[0,n\right]\right), \quad (\alpha,g) \mapsto C^{1/p}\sfd(x,y)^{\frac{p-1}{p}} \left(\int g^p \d\mm_{x,y}^L\right)^{1/p} - \int \int_\gamma g \d \alpha.$$
    Observe that the functional has always the same expression, the only dependence on $\Lambda$ and $n$ is in the domain of definition. 
    We notice also that $F_\Lambda^n(\cdot, g)$ is concave and upper semicontinuous because it is linear, while $F_\Lambda^n(\alpha, \cdot)$ is convex because it is a $L^p$-norm plus a linear term. By applying Proposition \ref{prop:convex_optimization} we have
    $$M_\Lambda^n:= \max_{\alpha \in \mathscr{P}(\Gamma_{x,y}^\Lambda)} \inf_{g\in \Lip_1(\X, [0, n])} F_\Lambda^n(\alpha,g) = \inf_{g\in \Lip_1(\X, [0, n])}\max_{\alpha \in \mathscr{P}(\Gamma_{x,y}^\Lambda)} F_\Lambda^n(\alpha,g)$$
    for every $\Lambda\geq 1$. We claim that $\limi_{\Lambda\to +\infty}M_\Lambda^n \geq 0$, for $n$ fixed. Suppose it is not the case. Then we can find $\varepsilon > 0$, a sequence $\Lambda_j \to +\infty$ and a sequence of functions $g_j \in \Lip_1(\X, [0,n])$ such that for every $\alpha \in \mathscr{P}(\Gamma_{x,y}^{\Lambda_j})$ it holds $F_{\Lambda_j}^n(\alpha, g_j) \leq -\varepsilon < 0$. By Ascolì-Arzela we can suppose that $g_j$ converges uniformly to a limit function $g_\infty \in \Lip_1(\X, [0,n])$. By assumption (iii) we can find $\gamma \in \Gamma_{x,y}$ such that 
    $$\left( \int_\gamma g_\infty \d s \right)^p \leq C\sfd(x,y)^{p-1} \int g_\infty^p \d\mm_{x,y}^L.$$
    By definition we have that $\gamma \in \Gamma_{x,y}^{\Lambda_j}$ if $j\ge j_0$, for some $j_0$.
    Therefore, denoting by $\delta_\gamma \in \mathscr{P}(\Gamma_{x,y})$ the atomic measure concentrated on $\gamma$, we get $F_{\Lambda_{j_0}}^n(\delta_{\gamma}, g_\infty) \geq 0$. Since $\gamma$ has finite length and $\mm_{x,y}^L$ is finite and since $g_j$ converges uniformly on compact subsets to $g_\infty$, we have that $F_{\Lambda_j}^n(\delta_\gamma, g_j) = F_{\Lambda_{j_0}}^n(\delta_\gamma, g_j)$ converges to $F_{\Lambda_{j_0}}^n(\delta_\gamma, g_\infty)$, which is a contradiction. 
    Therefore $\limi_{\Lambda\to +\infty}M_\Lambda^n \geq 0$. This means that for all $\varepsilon > 0$ there exist $\Lambda_\varepsilon^n \geq 1$ and $\alpha_\varepsilon^n \in \mathscr{P}(\Gamma_{x,y}^{\Lambda_\varepsilon^n})$ such that $F_{\Lambda_\varepsilon^n}^n(\alpha_\varepsilon^n, g) \geq -\varepsilon$ for all $g\in \Lip_1(\X, [0,n])$. We consider only $\varepsilon < (8CC_DL)^{\frac{1}{p}}\sfd(x,y)$.
    We claim that if $\Lambda_0 = 4(8C C_D L)^{\frac{1}{p}} $ then $\alpha_\varepsilon^n(\Gamma_{x,y}^{\Lambda_0}) \geq \frac{1}{2}$. Indeed if not then $\alpha(\Gamma_{x,y} \setminus \Gamma_{x,y}^{\Lambda_0}) > \frac{1}{2}$. We apply the condition above to $g=1$ getting $F^n_{\Lambda^n_\varepsilon}(\alpha^n_\varepsilon, 1) \geq - \varepsilon$, which means
    $$C^{\frac{1}{p}}\sfd(x,y)^{\frac{p-1}{p}} \mm_{x,y}^L(\X)^{\frac{1}{p}} - \int \ell(\gamma) \d \alpha_\varepsilon^n (\gamma) \geq -\varepsilon.$$
    This implies
    $$\frac{\Lambda_0}{2}\sfd(x,y) < \int\ell(\gamma) \d \alpha_\varepsilon^n(\gamma) \leq C^{\frac{1}{p}}\sfd(x,y)^{\frac{p-1}{p}}\mm_{x,y}^L(\X)^{\frac{1}{p}} + \varepsilon \leq 2(8CC_DL)^{\frac{1}{p}} \sfd(x,y),$$
    which is a contradiction. In the last inequality we used Lemma \ref{prop:properties_mxy}.
    Renormalizing the restriction of $\alpha_\varepsilon^n$ to $\Gamma_{x,y}^{\Lambda_0}$ we obtain the following. For every $\varepsilon > 0$ define $\beta_\varepsilon^n:= \alpha_\varepsilon^n(\Gamma_{x,y}^{\Lambda_0})^{-1}\,\alpha_\varepsilon^n \restr{\Gamma_{x,y}^{\Lambda_0}} \in \mathscr{P}(\Gamma_{x,y}^{\Lambda_0})$ and 
    $$\left(\int\int_\gamma g\,\d s \,\d\beta_\varepsilon^n\right)^p \leq 2^pC\sfd(x,y)^{p-1}\int g^p \d\mm_{x,y}^L + \varepsilon$$
    for all $g\in \Lip_1(\X, [0,n])$.
    The family $\{\beta_\varepsilon^n \}_{\varepsilon > 0}$ is relatively compact. Hence we take a limit point $\beta^n$ of the sequence $\beta_\varepsilon^n$ for $\varepsilon$ going to zero. We have that $\beta^n \in \mathscr{P}(\Gamma_{x,y}^{\Lambda_0})$
    and 
    $$\left(\int\int_\gamma g\,\d s\, \d\beta^n\right)^p \leq 2^pC\sfd(x,y)^{p-1}\int g^p \d\mm_{x,y}^L$$
    for all $g\in \Lip_1(\X, [0,n])$, by the properties of weak convergence, since the map $\Gamma_{x,y}^{\Lambda_0}\ni \gamma \mapsto \int_{\gamma} g\,\d s \in [0,\infty)$ is bounded and lower semicontinuous. 
    We now take a limit point $\beta \in \mathscr{P}(\Gamma_{x,y}^{\Lambda_0})$ of the sequence $\beta^n$ for $n$ going to $+\infty$. Then we have that 
    $$\left(\int\int_\gamma \min\lbrace g, M \rbrace\,\d s\, \d\beta\right)^p \leq 2^pC\sfd(x,y)^{p-1}\int \min\lbrace g, M \rbrace^p \d\mm_{x,y}^L$$
    for all $g\in \Lip_1(\X, [0,+\infty))$ and all $M \in \N$,
    again by weak convergence. 
    The right hand side measure is concentrated on a compact subset of $\X$, while the left hand side one is concentrated of curves of bounded length, so whose image lives in a compact subset of $\X$. This means that for a fixed $g\in \Lip_1(\X,[0,+\infty))$ we have $\min\lbrace g, M \rbrace = g$ if $M$ is big enough. Therefore we get
    
    \begin{equation}
    \label{eq:pencil_estimate_inside_equivalence}
    \left(\int\int_\gamma g\,\d s\, \d\beta\right)^p \leq 2^pC\sfd(x,y)^{p-1}\int g^p \d\mm_{x,y}^L
    \end{equation}
    for all $g\in \Lip_1(\X, [0,+\infty))$.
    So the same estimate holds for all $g\in \Lip(\X, [0,+\infty))$. A standard approximation argument (see \cite[Corollary 4.2.3 and the theorem below]{HKST15}) gives that \eqref{eq:pencil_estimate_inside_equivalence} holds for all $g \geq 0$ Borel. In particular, if we set $C' = 2^pC$ and $L' = \Lambda_0$ we get
    $$\left(\int\int_\gamma g\,\d s\, \d\beta\right)^p \leq C'\sfd(x,y)^{p-1}\int g^p \d\mm_{x,y}^{L'}$$
    for all $g \geq 0$ Borel, and $\beta \in \mathscr{P}(\Gamma_{x,y}^{L'})$.
    This is property (v).\\
    It is clear that (v) implies (iv), that (v) implies (ii) and that (ii) implies (iii). Moreover (iv) implies (iii), so (ii)-(v) are equivalent. To conclude the cycle of equivalences we need to show that (v) implies (i) estimating the $p$-modulus of $\Gamma_{x,y}$. For a given $\rho$ which is admissible for the definition of  $\textup{Mod}_p(\Gamma_{x,y},\mm_{x,y}^L)$ we get
    $$1 \leq \left(\int \int_\gamma \rho\,\d s\,\d \alpha \right)^p \leq C \sfd(x,y)^{p-1}\int \rho^p\d\mm_{x,y}^L,$$
    so $\textup{Mod}_p(\Gamma_{x,y},\mm_{x,y}^L)\geq \frac{1}{C}\sfd(x,y)^{1-p}$, which implies (i) by Proposition \ref{theo:PI_Keith}.\\
    Assume now that $\X$ is locally quasiconvex. It is always true that if $u\in \Lip(\X)$ then $\lip u \in \UG(u)$. So if (i) holds then (vi) holds as well. On the other hand we adapt the proof of (i) implies (iii) to show that (vi) implies (iii). Let $g \in \Lip_1(\X)$, $g\ge 0$ bounded and define $u(z) = \inf_{\gamma \in \Gamma_{x,z}} \int_\gamma g \,\d s$. Let $r_z$ be such that every two points belonging to $B_{r_z}(z)$ can be joined by a $\Lambda$-quasigeodesic. Arguing exactly as above we deduce that $u$ is $(\Lambda \Vert g \Vert_\infty)$-Lipschitz on $B_{r_z}(z)$. Therefore $u\in \Lip_\loc(\X)$. By Lemma \ref{lemma:loclip_to_lip_up_to_scale} we have that the restriction of $u$ to $\overline{B}_{x,y}^L$ is Lipschitz. By McShane Extension Theorem we can find a map $\tilde{u} \in \Lip(\X)$ that coincides with $u$ on $\overline{B}_{x,y}^L$. Applying (vi) to $\tilde{u}$ we get
    $$\vert u(x) - u(y) \vert ^p = \vert \tilde{u}(x) - \tilde{u}(y)  \vert ^p \le C\sfd(x,y)^{p-1}\int_\X (\lip\tilde{u})^p\,\d\mm_{x,y}^L = C\sfd(x,y)^{p-1}\int_\X (\lip u)^p\,\d\mm_{x,y}^L,$$
    where the last equality holds because $\mm_{x,y}^L$ is concentrated on the open set $B_{x,y}^L$ where $u$ and $\tilde{u}$ coincide. The missing step is the following estimate: we claim that $\lip u \le \Lambda g$. Indeed let $z \in \X$ and take $0<r<r_z$. For every $w\in B_{r}(z)$ we can find a curve $\gamma \in \Gamma_{z,w}^\Lambda$. By definition of upper gradient we have
    $$\vert u(z) - u(w) \vert \le \int_\gamma g \le \Lambda \sfd(z,w) \max_{B_{r}(z)} g.$$
    Since $g$ is continuous we get
    $$g(z) = \lim_{r\to 0}\max_{B_{r}(z)} g \ge \frac{1}{\Lambda} \lim_{r\to 0} \sup_{w\in B_r(z)}\frac{\vert u(z) - u(w) \vert}{\sfd(z,w)} = \lip u (z).$$
    Combining this fact with the estimate above we conclude that
    $$\vert u(x) - u(y) \vert ^p \le \frac{C}{\Lambda^p} \sfd(x,y)^{p-1} \int_\X g\,\d \mm_{x,y}^L.$$
    Then by definition of $u$ we find $\gamma \in \Gamma_{x,y}$ such that
    $$\left(\int_\gamma g \d s\right)^p \leq \frac{C}{\Lambda^p}\sfd(x,y)^{p-1} \int g^p(z)\d\mm_{x,y}^L,$$
    i.e. (iii) holds.
\end{proof}

\bibliographystyle{abbrv}
\bibliography{biblio.bib}
\end{document}